\newtheorem{thm}{Theorem}
\newtheorem{lem}[thm]{Lemma}
\newtheorem{prop}[thm]{Proposition}
\theoremstyle{definition}
\newtheorem{conj}[thm]{Conjecture}
\newtheorem{example}[thm]{Example}
\theoremstyle{remark}
\newtheorem{rmk}[thm]{Remark}
\DeclareMathOperator{\sgn}{sgn}
\DeclareMathOperator{\supp}{supp}
\DeclareMathOperator{\dist}{dist}
\DeclareMathOperator{\divergence}{div}
\DeclareMathOperator{\vol}{vol}
\DeclareMathOperator{\hessian}{Hess}
\DeclareMathOperator{\length}{length}
\newcommand{\C}{\mathbb{C}}
\newcommand{\R}{\mathbb{R}}
\newcommand{\Z}{\mathbb{Z}}
\newcommand{\Cinfc}{{C^\infty_c(T^nM)}}
\newcommand{\Mn}{{\mathscr M_n}}
\newcommand{\mass}{{\mathbf M}}
\newcommand{\holonomic}{\mathscr{H}}
\newcommand{\N}{\mathbb{N}}
\newcommand{\distributions}{{\mathscr{D}'(T^nM)}}
\newcommand{\distributionsP}{{\mathscr{D}'(P)}}
\newcommand{\Dn}{{\mathscr{D}'_n}}
\newcommand{\Pn}{{\mathscr{V}_n}}
\newcommand{\ddtzero}{{\left.\frac{d}{dt}\right|_{t=0}}}
\newcommand{\diffcondition}{{(Pos)}}
\newcommand{\holcondition}{{(Hol)}}
\newcommand{\critcondition}{{(Crit)}}
\newcommand{\homcondition}{{(Hom)}}
\newcommand{\probcondition}{{(Prob)}}
\newcommand{\posfuncs}{\mathscr F_\mu}
\date{}
\title{The variational structure of the space of holonomic measures}
\author{Rodolfo R\'ios-Zertuche}
\begin{document}

\maketitle

\noindent{\hspace{\fill}\emph{To Xavier G\'omez-Mont \'Avalos}\hspace{\fill}}

\begin{abstract}
Roughly speaking, holonomic measures are parametric varifolds without boundary. They provide a setting appropriate for the analysis of many variational problems.  In this paper, we characterize the space of variations for these objects, and we use the characterization to formulate stability conditions that are strictly more general than the Euler-Lagrange equations. We also use this characterization to deduce higher-dimensional analogues of energy conservation and weak KAM.

Along the way, we characterize the distributions that arise as derivatives of families of Borel probability measures on smooth manifolds.
\end{abstract}

\tableofcontents
\section{Introduction}
\label{sec:introduction}
In this paper we consider the space of holonomic measures on a manifold $M$ with tangent bundle $TM$. These are roughly speaking all Borel measures on $T^nM=TM\oplus\cdots\oplus TM$ that can be approximated by $n$-dimensional cycles. One can also say that they are parametric varifolds without boundary, because they induce a varifold and also encode a local parameterization for it. These are defined carefully in Section \ref{sec:preliminaries}. 

We study the ways in which these measures can be deformed, thus characterizing the velocity vectors of all curves in the space of holonomic measures that are differentiable in a certain sense. We are thus able to give a good description of the tangent bundle to the space of holonomic measures. We do this in Section \ref{sec:tangent}.

This study is fruitful, as is shown by an initial set of applications presented in Section \ref{sec:examples}. Among other things, we are able to show that the conditions we obtain for criticality are effectively more general than the classical Euler-Lagrange equations. We also show analogues of energy conservation and of the weak KAM theorem, for which we are missing a regularity result however.

In order to achieve our goal, we prove a general characterization of all distributions that appear as derivatives of both signed, positive, and probability measures. We do this in Section \ref{sec:families}.

In order to clarify the general ideas and goals of the paper, we present in Section \ref{sec:toyexample} an example that illustrates the general philosophy, and in Section \ref{sec:examplesoflagrangians} a series of examples of different Lagrangians that could be studied using holonomic measures. We then present a brief survey of related literature in Section \ref{sec:literature}. In Section \ref{sec:acknowledgements}, we acknowledge the people and institutions that contributed to this work.

\subsection{Introductory example: length-minimizing curves have no corners}
\label{sec:toyexample}

%\paragraph{Toy example 1: Geodesics are smooth.} 
Let $M$ be the flat two-dimensional torus $M=\R^2/\Z^2$.
Let $\gamma:[0,2)\to M$ be a closed curve, given by 
\[\gamma(s)=\left\{\begin{array}{ll}
(s,0)\mod \Z^2,&s\in[0,1), \\
(0,s)\mod \Z^2,&s\in[1,2).
\end{array}\right.\]
The curve $\gamma$ induces a probability measure $\mu_\gamma$ on $TM$ by pushing forward the uniform probability on $[0,2)$ by use of the map $d\gamma\colon t\mapsto (\gamma(t),\gamma'(t))$. Thus, for measurable $f\colon TM\to\R$, we have
\begin{multline*}
\int f\,d\mu_\gamma=\frac12\int_0^2f(\gamma(t),\gamma'(t))\,dt\\
=\frac12\int_0^1f((s,0),(1,0))\,dt+\frac12\int_1^2f((0,s),(0,1))\,dt.
\end{multline*}
The support of $\mu_\gamma$ is exactly the set of velocity vectors of $\gamma$. The measure $\mu_\gamma$ is an example of what we call a holonomic measure. It encodes a 1-dimensional submanifold with a singularity at $(0,0)$, together with its parameterization. Note that at the origin $(0,0)$ the two components of $\gamma$ cross, so that there are two tangent vectors:
\begin{equation}\label{eq:tangentdescription}
\supp\mu_\gamma\cap T_{(0,0)}M=\{(1,0),(0,1)\}.
\end{equation}

In the traditional proof that our curve $\gamma$ is not a geodesic, we would proceed by comparing its length with the length of a curve that takes ``shortcuts'' near $(0,0)$ to cut the corner. Such a comparison proof is easy to complete in the simple case of a minimizer of 1-dimensional length, but it could be more difficult to produce such a construction in higher dimensions and for any smooth Lagrangian $L$.

We are interested in extracting as much information about the minimizers as can be obtained from doing variations of them. For example, we know intuitively that there is some variation of our curve $\gamma$ above that looks like the one in Figure \ref{fig:variation}. Let us call $\gamma_t$ the curve in this variation corresponding to each time $t\in \R$. In this variation, at $t=0$ we have the original curve $\gamma$. For negative $t$, we have a curve of length larger than that of $\gamma$. For positive $t$, the curves have length smaller than that of $\gamma$. We thus have
\[\ddtzero\length(\gamma_t)<0,\]
which immediately proves that $\gamma$ cannot be a minimizer of the length.

\begin{figure}
  \centering
    \includegraphics[width=0.9\textwidth]{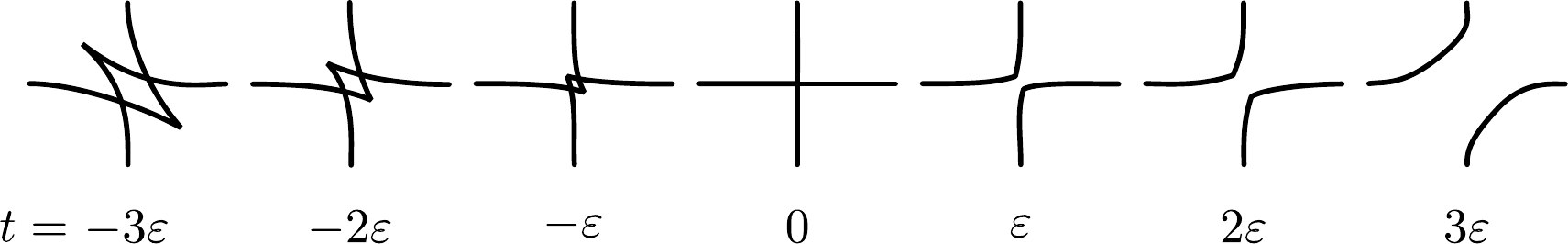}
  \caption{A variation of $\gamma$ near $(0,0)$ at different times $t$ for some $0<\varepsilon\ll1$.}
  \label{fig:variation}
\end{figure}

Thus if we knew that such a variation existed every time the support of the measure $\mu_\gamma$ intersected a fiber of $TM$ in two points, we would be able to conclude that minimizers cannot have corners.

This is all intuitively true, but the variation of Figure \ref{fig:variation} is rather hard to write down. Instead of writing it down, we write down the desired derivative $d\gamma_t/dt|_{t=0}$ of the family of curves $\gamma_t$, and we compute the derivative $\ddtzero\length(\gamma_t)$. Theorem \ref{thm:tangent} will guarantee the existence of some variation with that derivative, like the one in Figure \ref{fig:variation} --- which we no longer need to construct explicitly. 

Instead of trying to write down $\ddtzero\gamma_t$, we consider the family of measures $\mu_t$ induced by the curves $\gamma_t$, and we write down the derivative $\ddtzero\mu_t$, which will be a distribution $\eta$.  Perhaps the main property of the family $\gamma_t$ at 0 is that all the movement is (infinitesimally) happening precisely in the tangent space to $(0,0)\in M$, and the main component of the movement is really the change in the directions of the vectors tangent to the curve, which are infinitesimally getting closer as $t$ increases, as illustrated in Figure \ref{fig:tangents}. While the curve does move away from $(0,0)$, we can actually assume that that effect occurs as $O(t^2)$, and is hence not important for the derivative $\eta$. Thus we only need $\eta$ to reflect the movement of the tangent vectors, and it should look like the scheme in Figure \ref{fig:derivative}. 

\begin{figure}
  \centering
    \includegraphics[width=0.9\textwidth]{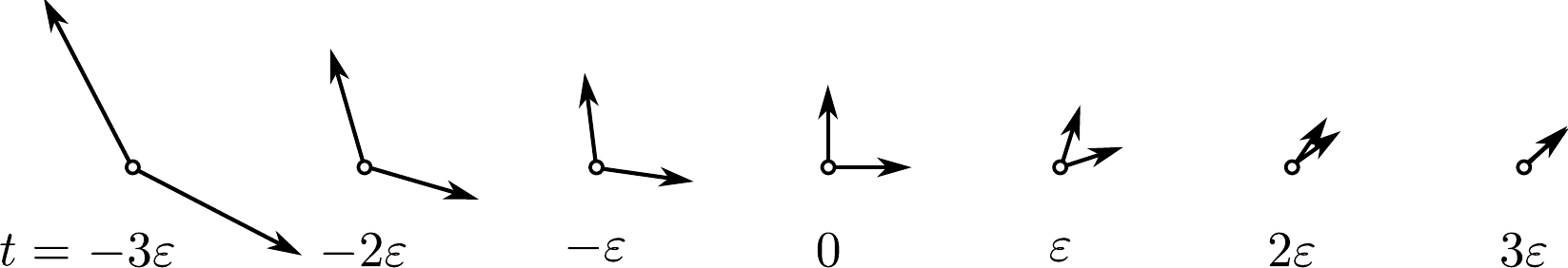}
  \caption{Evolution of the tangent vectors to the curve $\gamma$ of Figure \ref{fig:variation} at the singular point.}
  \label{fig:tangents}
\end{figure}

\begin{figure}
  \centering
    \includegraphics[width=0.08\textwidth]{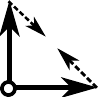}
  \caption{Schematic representation of the derivative $\eta=\ddtzero\mu_t$, deduced from Figure \ref{fig:tangents}.}
  \label{fig:derivative}
\end{figure}

We need to make an interlude to explain how to represent movement with a distribution. The easiest example is to take the family of Dirac deltas $\delta_t$ on $\R$, which move with $t$. The derivative of the movement at 0 should be the tangent vector $v=(1)$ to $\R$. As a distribution, we see that 
\[\ddtzero \langle \delta_t,f\rangle=\ddtzero f(t)=f'(0)=\langle-\partial \delta_0,f\rangle.\]
In other words, the derivative $\ddtzero\delta_t=-\partial \delta_0$ is the distributional way to express movement from left to right on the real line, at $x=0$.

To implement the movement schematized in Figure \ref{fig:derivative}, we first see that the measure $\mu_0=\mu_\gamma$, when restricted to $T_{(0,0)}M$, is really a sum of two deltas at the points of its support (compare with equation \eqref{eq:tangentdescription}). We want to move the one at $(1,0)$ in the direction $(-1,1)$, so we will have a derivative $-\partial_{(-1,1)}\delta_{((0,0),(1,0))}$. Similarly, the one at $(1,0)$ should move in the direction $(1,-1)$, which gives a component $-\partial_{(1,-1)}\delta_{((0,0),(0,1))}$. (The derivatives act in the direction of the fibers of the tangent bundle.)  In other words, we have the distribution $\eta$ on $TM$ given by
\[\eta=-\partial_{(-1,1)}\delta_{((0,0),(1,0))} -\partial_{(1,-1)}\delta_{((0,0),(0,1))}.\]

The conditions of Theorem \ref{thm:tangent} are easy to check. By Lemma \ref{lem:Poseasy}, Condition \diffcondition\ holds. Since $\langle\eta,1\rangle=0$, Condition \probcondition\ holds. To see that Condition \holcondition\ holds, we pick $f\in C^\infty(M)$ and compute
\begin{align*}
\langle \eta, df\rangle&=(f_x,f_y)\cdot((-1,1)+(1,-1))=0
\end{align*}
so the condition is satisfied.
This means that $\eta$ is indeed the derivative of some family $\mu_t$ of holonomic measures.

Now, let us use this to check that $\gamma$ is indeed not a geodesic. We let $L(x,v)=\sqrt{v\cdot v}$, so that the integral 
\[\int L\,d\mu_\gamma=\int_0^2 |\gamma'(s)|\,ds=\length(\gamma)\]
expresses the length functional as a Lagrangian action with Lagrangian density $L$.
If $\gamma$ were a critical point of this length functional, the derivative
\[\ddtzero\length(\gamma_t)=\ddtzero\int L\,d\mu_t=\langle\eta,L\rangle\]
would vanish for all variations $\gamma_t$. Thus, since
\begin{multline*}
\langle\eta,L\rangle=\nabla|v|\cdot(-1,1)|_{v=(1,0)}+\nabla|v|\cdot(1,-1)|_{v=(0,1)} \\
=\left.\frac{-v_1+v_2}{|v|^3}\right|_{v=(1,0)}+\left.\frac{v_1-v_2}{|v|^3}\right|_{v=(0,1)}=-1-1=-2\neq 0,
\end{multline*}
we conclude that $\gamma$ is not a critical point of the length, hence also not a minimizer.
Which is what we expected from the intuition given to us from Figure \ref{fig:variation}.

In this particular case we are, of course, reaching for the sledgehammer to crack a nut. However, the examples of Section \ref{sec:examples} will show that it is often easy and worthwhile to consider large families of variations and to extract infteresting information from them.

\subsection{Examples of Lagrangians}\label{sec:examplesoflagrangians}

Geometric measure theory has traditionally worried mostly about problems in which the Lagrangian has lots of symmetries. Good examples are the $k$-dimensional area (or mass), that is, when the Lagrangian is the volume $\vol_k$ induced by a Riemannian metric $g$ on the manifold $M$, by
\[L(x,v_1,\dots,v_n)=\vol_k(v_1,\dots,v_k)=\left|\det(g(v_i,v_j))_{i,j=1}^k\right|\]
and the classic mechanical Lagrangians, of the form
\[L(x,v)=\frac12{g(v,v)}+V(x),\]
for some potential $V$.
We want to argue that there are many interesting situations in which one cannot expect so much symmetry.

\begin{example}
Consider the case of socks, which are generally made of textile tissue with the property that it is more stretchy in one direction than in the other. In this case, an appropriate model would probably be of the form
\[L(x,v_1,v_2)=\vol_2(v_1,\beta v_2)+V(x)\]
where $0<\beta\neq 1$ is a parameter that will account for the difference in stretchiness, and $V$ is some potential (intended to keep the sock away from the foot). Notice that socks usually have seams, and the objects we propose, holonomic measures, allow for this.
\end{example}

\begin{example}
The construction of a flexible computer screen out of organic electronics to wrap a geometric body, such as could be used decoratively on an innovative architectural piece, would probably have different costs depending on the direction in which it were laid out, thus becoming the solution of an anisotropic Lagrangian optimization problem.
\end{example}

\begin{example}
A plant's stem can be thought of as a 3-dimensional cell minimizing an energy that is different in the longitudinal direction than in the radial direction, and for which the source of light matters. It is also likely that a position-dependent potential would have to be added to account for the problems that height brings, like difficulty in water transportation. Hence, the appropriate Lagrangian would be anisotropic and would have symmetry neither with respect to $\mathrm{GL}(d,\R)$ (acting on $TM$) because the direction of the light matters, nor with respect to $S_3$ (acting on the indices of $v_1,v_2,v_3$) because longitudinal and radial growth have different costs.
\end{example}

\begin{example}
Another example is that of laminations that locally look like harmonic from a given manifold. In this case, the Lagrangian looks like an anisotropic
quadratic form on the partial derivatives of the local parameterization. 

While the existence of harmonic maps has been proved in some cases ---notably in the case in which the target manifold has nonnegative sectional curvature \cite{eellssampson}---, it is known that there are no harmonic maps in many other cases. For instance, there are no harmonic maps from the sphere $S^2$ into itself (see for example \cite{jost}). There is a body of literature devoted to suggesting alternatives to harmonic maps in the cases in which those do not exist. Using holonomic measures, one always gets an energy minimizer that may not correspond to an immersed manifold.  It would be interesting to understand these minimizers more deeply.
\end{example}

\subsection{Related literature.}\label{sec:literature}
Geometric measure theory and variational analysis are vast subjects, so a discussion about how this research fits in those contexts is in place. However, since it seems impossible to give an exhaustive discussion, we choose to instead give just a brief one and hence minimize the number of mistakes we make in the process. Also, we will not define all the objects involved, but rather we will just mention them in the hope that readers familiar with these concepts will find the information they are looking for, while readers not familiar with them will be happy to ignore the discussion.

Throughout this paper, $d\geq1$ will denote the dimension of the ambient mainfold $M$, while $1\leq n\leq d$ will denote the dimension of the holonomic measures. This roughly means that we are considering submanifolds of dimension $n$.

Holonomic measures appeared in the $n=1$ case in Mather's \cite{matheractionminimizing91} version of Mather-Aubry theory for minimizers of the action of Lagrangians on the torus. The theory of holonomic measures was extended by others; for example by Ma\~n\'e \cite{manhe,contrerasiturriagabook}, Bangert \cite{bangert}, Bernard \cite{patrick}. A certain case of codimension one of Mather-Aubry theory was considered by Moser \cite{moser1986,moser1987,moser1988}.

In the more general context we treat here, in which $n$ can be arbitrary, a similar theory should exist for a large class of Lagrangians. Under rather mild conditions in the Lagrangian (such as convexity, coercivity superlinearity, tightness, quasiconvexity) minimizers exist in all holonomy classes with coefficients in the real numbers $\R$. However, analogues of Mather's $\alpha$ and $\beta$ functions are probably only defined for a very restricted set of Lagrangians.

Holonomic measures induce superpositions of \emph{currents} (cf. \cite{federer,morgan}) on a manifold $M$ in an obvious way. However, they carry more information than currents because they take into account the parameterization and orientation of the minimizers, and hence allow for the study of anisotropic Lagrangians. 

Holonomic measures also induce \emph{varifolds} (cf. \cite{almgrenvarifolds,allard,simon}). Again, they carry more information because they record not only the tangent planes, but also the velocity vectors of a `parameterization,' and the orientation. Our characterization of the tangent bundle to the space of holonomic measures also implies a characterization of the space of first variations of a varifold, which turns out to be larger than the set considered traditionally since the work of Allard \cite{allard}; for a summary of that theory see for example \cite{leonardimasnou}. 

Similarly, holonomic measures also have more structure than \emph{sets of finite perimeter} (see for example \cite{maggi}). Although in this paper we consider only objects without boundary (i.e., with empty perimeter), it is easy to use the variations of Proposition \ref{prop:holonomic} of the present paper that are given in \cite{myclosedmeasuresareholonomic} to get similar results to those explained here in the case in which the sets have boundary, and in those cases one should indeed require certain finiteness conditions.

The set of holonomic measures contains representations of the families of the \emph{cone and cup competitors} and the \emph{deformed competitors} for the direct approaches to Plateau's problem discussed in the recent papers \cite{delellisghiraldinmaggi,dephillipsderosaghiraldin}. Again, those do not carry information about the parameterization or the orientation of the minimizers. Similar remarks correspond to the geometric approaches of \cite{reifenberg60,reifenberg64a,reifenberg64b,feuvrier09,depauw09,harrison11,harrison14,harrisonpugh,fang13}. The disadvantage of holonomic measures with respect to those lies in the lack of clear geometric structure (i.e., our set of `competitors' is much larger and harder to describe \emph{a priori}). A good review of other alternatives is given in \cite{david14}.

With holonomic measures the issue of rectifiability is not a concern since rectifiability is built into them. Whether or not one can find their volume (or the action of a Lagrangian) depends on the question of whether this function is integrable with respect to them.

Holonomic measures are suitable for the treatment of many problems that could be approached parametrically using functions for example in Sobolev or Lipschitz spaces (cf. \cite{evans,dacorogna,kristensenmingione, giusti}). 

Superpositions of \emph{Young measures} (cf. \cite{patrick,young}) are a special case of holonomic measures. 

In Section \ref{sec:vertical} we deduce a sort of general Hamilton-Jacobi equation, a case of which has been studied to great depth (see for example \cite{crandalllions,fathibook}).

The definition of differentiability of families of measures (i.e., of varitions) that we use is only one possibility of many; see for example \cite{smolyanov} for an exploration of other possibilities.

\subsection{Acknowledgements.}\label{sec:acknowledgements}
I am deeply indebted to John N. Mather for his patience in listening to a number of sometimes very confused and tentative presentations of these results and for his help in clarifying my ideas with numerous questions and suggestions. I am also very grateful to Antonio Ache, Camilo Arias Abad, Victor Bangert, Patrick Bernard, Albert Fathi, Jes\'us Puente Arrubarrena, and Stefan Suhr for several  conversations on this subject, to Gonzalo Contreras and Renato Iturriaga for introducing me to the calculus of variations, and to Burglind Juhl-J\"oricke for teaching me what I know about distributions. I am also very grateful to Luigi Ambrosio for very helpful comments.

I am very grateful to Princeton University, to the Institute for Computational and Experimental Research in Mathematics at Brown University, and to the Max Planck Institute for Mathematics in Bonn for their hospitality and support during the development of this research.

\section{Distributions that arise as derivatives of families of measures}
\label{sec:families}

%\subsection{Introduction}
Throughout this section, let $P$ be a $C^\infty$ manifold of dimension $m$ without boundary, and let $\mu$ be a Borel measure on $P$. Denote by $C^\infty_c(P)$ the space of smooth functions with compact support on $P$. 
% 
% Consider a family of measures $\mu_s$ indexed by a real parameter $s$ with values in an interval that contains 0, and such that $\mu=\mu_0$. We say that $\mu_s$ is differentiable at 0 if for every $f\in C^\infty_c(P)$ the function $s\mapsto \int f\,d\mu_s$ is differentiable at 0 and if the derivative induces a distribution. 
% For example, if the family is a moving Dirac delta $\mu_s=\delta_s$ on $\R$, then the derivative at 0 is the distribution $-\partial\delta_0$ given by $\langle-\partial\delta_0,f\rangle=f'(0)$ for all $f\in C^\infty_c(\R)$.

In this section, %we address the question of characterizing the distributions that arise in this way. In other words, we 
characterize the velocity vectors for curves in the space of Borel measures on $P$ that pass through $\mu$.  These velocity vectors are given by certain distributions.

We find that if the measures $\mu_s$ are allowed to be signed (i.e., to have both positive and negative mass), then any distribution can arise; see Proposition \ref{prop:characterizationsigned}. On the other hand, if the measures $\mu_s$ are only allowed to be positive, we find a necessary and sufficient condition for a given distribution to be the velocity vector of a curve through $\mu_0$. This is Condition \diffcondition\ below, which says that the nullspace of the distribution must contain all smooth, nonnegative functions that vanish on the support of $\mu_0$. This characterization is our main result of the section, and it is given in Theorem \ref{thm:generalfamilies}. This theorem also accounts for the case in which all the measures $\mu_s$ are probabilities. We also look at the case of one-sided derivatives in Section \ref{sec:onesided}.

In Section \ref{sec:flows} we explain how one can use Colombeau algebras to build the bridge with the classical ideas of mass transport.

Interest in the variational structure of the space of measures, which we study here, comes from the applications that the analysis of measures has found for example in problems of optimal transport (e.g., \cite{ambrosio,ambrosiogiglisavare}) and optimization, as in Mather-Aubry theory (e.g., \cite{matheractionminimizing91,contrerasiturriagabook}). Differentiable families of measures have also been studied extensively for example in \cite{smolyanov}. Our own applications appear in Section \ref{sec:tangent}. 

The variational structure of the space of measures has been explored, with a stronger topology that results in a smaller tangent space, in \cite[Sections 8.4 and 8.5]{ambrosiogiglisavare}. 

We give precise definitions and some preliminaries in Section \ref{sec:subdistributions}, and we state and prove our result in Section \ref{sec:general_deformations}.
In Section \ref{sec:flows} we give some comments regarding what these results mean for mass transport and flows.

\subsection{Distributions and measures}\label{sec:subdistributions}
\subsubsection{Convolutions}\label{sec:smoothing}

A \emph{mollifier} is a function $\psi\in C_c^\infty(\R)$ such that $\psi(x)=\psi(-x)$, $\int\psi=1$, and $\psi\geq 0$. 

We will say that a tuple of vector fields $F=(F_1,\dots,F_\ell)$ on $P$ is \emph{generating} if at every point $p\in P$ the vectors $F_1(p),\dots,F_\ell(p)$ span all of the tangent space $T_pP$.

Fix a generating tuple of vector fields $F=(F_1,\dots,F_\ell)$. Denote by $\phi^i:P\times \R\to P$ the \emph{flow} of $F_i$:
\[\phi^i_0(x)=0,\quad\frac{d\phi^i_s(x)}{ds}=F_i(\phi^i_s(x)),\quad s\in \R.\]
For $f\in C^\infty_c(P)$, we will denote by $P_i(f)$ the function given by
\[P_i(f)(x)=\int_\R f\circ \phi_s^i(x)\,\psi(s)\,ds.\]
This is a \emph{convolution} in the direction $F_i$.

For $f\in C^\infty_c(P)$, we will denote
\[\psi*_Ff:=P_1P_2\cdots P_\ell (f).\]

\subsubsection{Definition and smoothing of distributions}
\label{sec:distsmoothing}

A \emph{distribution} on the open set $U\subseteq\R^m$ is a linear functional $\eta:C^\infty_c(U)\to\R$ such that for each compact set $K\subset U$ there are some constants $N>0$ and $C>0$ (depending only on $K$ and $\eta$) such that
\[|\langle \eta,f\rangle|\leq C\sum_{|I|\leq N}\sup_{p\in U} |\partial^I f(p)| \]
for all $f\in C^\infty_c(U)$.
Here, the sum is taken over all multi-indices $I$ with $m$ nonnegative entries adding up to at most $N$, and $\partial^I$ denotes the iterated partial derivatives in the corresponding directions in $\R^m$.

We fix, once and for all, an $n$-dimensional $C^\infty$ manifold $P$ without boundary, and with a Riemannian metric that induces the distance $\dist_P$ between points of $P$. 

Let $\eta\colon C^\infty_c(P)\to\R$ be a linear functional.
For a chart $\varepsilon\colon U\to W$ from the open set $U\subseteq P$ to the open set $W\subseteq \R^n$, the \emph{pushforward} $\varepsilon_*\eta$ is defined by
\[\langle \varepsilon_*\eta,f\rangle=\langle\eta,f\circ\varepsilon\rangle\]
for $f$ in $C^\infty_c(W)$.

The functional $\eta$ is a \emph{distribution} if for each chart $\varepsilon$ as above, $\varepsilon_*\eta$ is a distribution on $W$.
We will denote by $\distributionsP$ the space of distributions on $P$. 
The topology on $\distributionsP$ is induced by the seminorms 
\[\eta\mapsto|\langle \eta , f \rangle|\]
for $f\in C^\infty_c(P)$. In other words, we have $\eta_i\to\eta$ if, and only if, $\langle\eta_i,f\rangle\to\langle\eta,f\rangle$ for all $f\in C^\infty_c(P)$. We remark that any measure on $P$ determines a distribution, but that not all distributions arise in this way.

For a distribution $\eta\in \distributionsP$, we define the convolution by duality:
\[\langle \psi*_F\eta,f\rangle=\langle\eta,\psi*_Ff\rangle.\]

\begin{lem}
\label{lem:measuredistribution}
 If $\eta$ is a distribution in $\distributionsP$, $F$ is a generating tuple of vector fields, and $\psi$ is a mollifier, then $\psi*_F\eta$ is a smooth signed Borel measure. 
\end{lem}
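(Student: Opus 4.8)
The plan is to show that $\psi *_F \eta$, defined by duality via iterated directional convolutions along the flows of a generating tuple $F = (F_1,\dots,F_\ell)$, is (i) given by integration against a smooth function in every chart, hence a smooth density, and (ii) locally finite, hence a genuine signed Borel measure. The key observation is that each operator $P_i$ is a smoothing operator in the direction $F_i$, and composing $P_1 \cdots P_\ell$ smooths in a spanning set of directions at every point, so the composite is a smoothing operator in all directions simultaneously.

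First I would work in a fixed chart $\varepsilon : U \to W \subseteq \R^n$ and examine the dual operator $f \mapsto \psi *_F f$. Writing out $P_i(f)(x) = \int_\R f(\phi^i_s(x))\,\psi(s)\,ds$, I would show that for each fixed $x$, differentiating under the integral sign in $x$ any number of times is legitimate (compact support of $\psi$, smoothness of the flow $\phi^i$ jointly in $(s,x)$), so $P_i$ maps $C^\infty_c(P)$ to $C^\infty_c(P)$ continuously and, crucially, the value $\psi *_F f(x)$ and all its derivatives at $x$ depend on $f$ only through the values of $f$ on a fixed compact neighborhood of $x$ (the image of a bounded piece of the flows). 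Iterating, $\psi *_F f$ is smooth with support contained in a fixed neighborhood of $\supp f$. The main technical point is that $\psi *_F$ has a smooth integral kernel: by unwinding the iterated integrals and changing variables along each flow (each $\phi^i_s$ is a diffeomorphism with smooth Jacobian), one writes
\[
(\psi *_F f)(x) = \int_P f(y)\, k(x,y)\, dy
\]
for a kernel $k$ that is smooth in both variables and compactly supported in $y$ for each $x$ — this is where the generating (spanning) hypothesis enters, guaranteeing that the successive convolutions cover a full-dimensional neighborhood rather than a lower-dimensional set, so that $k(x,\cdot)$ is an honest $L^1$ function and not a distribution.

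Given the kernel representation, I would then define the candidate density by $g(x) := \langle \eta, k(x,\cdot)\rangle$ (with $k(x,\cdot) \in C^\infty_c(P)$). Using the seminorm estimate in the definition of a distribution, together with the smooth dependence of $k(x,\cdot)$ and all its $x$-derivatives on $x$ (and the fact that differentiation in $x$ commutes with the pairing by a standard difference-quotient argument, again justified by compact support), one checks that $g \in C^\infty(P)$. Finally, for $f \in C^\infty_c(P)$, interchanging $\langle \eta, \cdot\rangle$ with the $y$-integral (a Fubini/Riemann-sum approximation argument, valid because $k$ is smooth and compactly supported and $\eta$ is continuous on $C^\infty_c$) gives
\[
\langle \psi *_F \eta, f\rangle = \langle \eta, \psi *_F f\rangle = \int_P f(x)\, g(x)\, dx,
\]
so $\psi *_F \eta$ is integration against the smooth function $g$. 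Local finiteness is immediate since $g$ is locally bounded, so $\psi *_F \eta$ is a smooth signed Borel measure.

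The main obstacle I anticipate is establishing the kernel representation cleanly — specifically, making precise that the iterated convolution along a spanning set of directions produces a kernel that is genuinely a smooth function on $P \times P$ (locally), and justifying the interchange of $\eta$ with the integration against this kernel. This requires a careful change-of-variables computation handling the composition $\phi^1_{s_1} \circ \cdots \circ \phi^\ell_{s_\ell}$ and the observation that, because $F$ is generating, the map $(s_1,\dots,s_\ell) \mapsto \phi^1_{s_1}\circ\cdots\circ\phi^\ell_{s_\ell}(x)$ is a submersion onto a neighborhood of $x$, so pushing forward the product mollifier $\psi(s_1)\cdots\psi(s_\ell)\,ds$ yields an absolutely continuous measure with smooth density. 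Everything else — differentiating under integral signs, continuity of $\eta$, Fubini-type interchanges — is routine once the kernel is in hand.
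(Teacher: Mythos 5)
The paper does not give its own proof of this lemma; it only cites Friedlander \S5.2, so I am assessing your proposal on its own terms. Your plan---establish a smooth integral kernel $k(x,y)$ for $\psi *_F$, set $g(x)=\langle\eta,k(x,\cdot)\rangle$, verify $g\in C^\infty$, and interchange to get $\langle\psi*_F\eta,f\rangle=\int fg$---is the standard template, and steps 2--4 are routine once step 1 is secured.

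The gap is in step 1. You assert that the kernel is smooth because ``the successive convolutions cover a full-dimensional neighborhood,'' but this argument only addresses absolute continuity of the kernel, not its regularity, and only near $y=x$. Concretely, write $\Phi_x(s_1,\dots,s_\ell)=\phi^1_{s_1}\circ\cdots\circ\phi^\ell_{s_\ell}(x)$; then $k(x,\cdot)\,dy$ is the pushforward of $\psi(s_1)\cdots\psi(s_\ell)\,ds$ under $\Phi_x$. The generating hypothesis gives $d\Phi_x$ surjective at $s=0$ (the columns are $F_1(x),\dots,F_\ell(x)$), and by continuity near $s=0$, but it does \emph{not} give surjectivity on the whole of $\supp(\psi^{\otimes\ell})$, and the paper's notion of mollifier places no smallness restriction on $\supp\psi$. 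Where $d\Phi_x$ drops rank, the pushed-forward density generically develops singularities (the $1/\sqrt{t}$ picture for pushing a smooth density through a fold), so $k(x,\cdot)$ need only be $L^1_{\mathrm{loc}}$, and then $\langle\eta,k(x,\cdot)\rangle$ is not even defined for a general distribution $\eta$. This is not hypothetical: take $P=SU(2)$ with $F_1,F_2,F_3$ the left-invariant extensions of an $\mathfrak{su}(2)$ basis; after left-translating the columns of $d\Phi_e$ to the Lie algebra, a short $\mathrm{Ad}$ computation gives $\det d\Phi_e(s)=\pm\cos s_2$, which vanishes at $s_2=\pm\pi/2$, well inside the support of an ordinary bump mollifier.

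Two ways to repair it. First, one may add the (harmless, and consistent with how the paper later uses $\psi*_{s^2F}$ with small $s$) hypothesis that $\psi$ has sufficiently small support, so that $\Phi_x$ is a submersion on $\supp(\psi^{\otimes\ell})$ uniformly for $x$ in compacta; then your kernel argument goes through as written, since the pushforward of a smooth compactly supported density under a proper submersion is smooth. Second, one can bypass the kernel entirely with a microlocal argument: in coordinates straightening $F_i$, the adjoint $P_i^*$ is a convolution in the $F_i$-direction with symbol $\hat\psi$, hence rapidly decaying conormal to $F_i$; therefore $WF(P_i^*\nu)\subseteq WF(\nu)\cap\{(x,\xi):\xi(F_i(x))=0\}$, and composing over a generating tuple forces $WF(\psi*_F\eta)\subseteq\{(x,\xi):\xi(F_i(x))=0\ \forall i\}=\varnothing$, giving $C^\infty$ regularity directly. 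Either route closes the gap; as written, the claim that $k$ is smooth is not justified and is in general false without the small-support caveat.
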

 For a proof see for example \cite[\S5.2]{friedlander}.

\subsubsection{Structure}
\label{sec:diststructure}

We fix a generating tuple $F=(F_1,\dots,F_\ell)$ of vector fields.
As before, we denote by $I$ a multi-index $I=(i_1,\dots,i_\ell)$ with $\ell$ nonnegative entries, and by $\partial^I$ the operator that iteratively takes $i_j$ covariant derivatives in the direction $F_{j}$, $j=1,\dots,\ell$.

 As usual in the theory of distributions, we define derivatives of distributions $\nu$ by duality,
 \[\langle\partial^I\nu,f\rangle=(-1)^{|I|}\langle\nu,\partial^If\rangle,\]
 and the \emph{support} $\supp\nu$ of a distribution $\nu$ to be largest set such that if $f\in C^\infty_c(P)$ is supported outside $\supp\nu$ then $\langle\nu,f\rangle=0$.

\begin{lem}[Structural representation in terms of measures]\label{lem:decompositionmeasure}
A distribution $\eta\in\distributionsP$ can be written as a sum
\begin{equation}\label{eq:decompositionmeasure}
\eta=\sum_I \partial^I\nu_I 
\end{equation}
where $I$ ranges over all multi-indices as above; for each $I$, $\nu_I$ is a signed measure. For a compact set $K\subseteq P$, 
\[K\cap\supp\nu_I=\varnothing\]
for all but finitely many multi-indices $I$.
\end{lem}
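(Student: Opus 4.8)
\section*{Proof proposal}

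The plan is to reduce to the classical local structure theorem for distributions by means of a locally finite atlas and a subordinate partition of unity, and then to transport the resulting coordinate-derivative decomposition into one written in terms of the fixed generating frame $F$.

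First I would choose a locally finite cover $\{U_j\}_{j\in\N}$ of $P$ by domains of charts $\varepsilon_j\colon U_j\to W_j$ into Euclidean space, together with a partition of unity $\{\chi_j\}$ subordinate to it, so that each $\supp\chi_j$ is compact and contained in $U_j$. Since $\sum_j\chi_j\equiv1$ and the cover is locally finite, $\eta=\sum_j\chi_j\eta$, with the sum converging in $\distributionsP$ because only finitely many terms are nonzero when tested against any fixed $f\in C^\infty_c(P)$. Each $\chi_j\eta$ has compact support inside the single chart $U_j$, hence is a distribution of finite order $N_j$: from the defining inequality there are constants with $|\langle\chi_j\eta,f\rangle|\le C_j\sum_{|I|\le N_j}\sup|\partial^I f|$. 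By the classical structure theorem for compactly supported distributions of finite order --- proved by feeding the injection $f\mapsto(\partial^I f)_{|I|\le N_j}$ of $C^\infty_c(W_j)$ into a finite product of spaces of continuous functions, extending the induced functional by Hahn--Banach, and applying the Riesz representation theorem, as in \cite[\S5.2]{friedlander} --- one obtains, after pulling back through $\varepsilon_j$, finite signed Borel measures $\mu_{j,I'}$ supported in $\supp\chi_j$, indexed by coordinate multi-indices $I'$ with $|I'|\le N_j$, such that $\chi_j\eta=\sum_{|I'|\le N_j}\partial^{I'}\mu_{j,I'}$, where $\partial^{I'}$ are the coordinate derivatives of the chart.

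Next I would convert coordinate derivatives into $F$-derivatives in the ordered form $\partial^I$ used in the statement. Because $F$ is generating, on $U_j$ each coordinate field satisfies $\partial_{x_k}=\sum_i a^j_{ik}F_i$ with $a^j_{ik}\in C^\infty(U_j)$; applying the Leibniz rule repeatedly and reordering factors via $F_{i'}F_i=F_iF_{i'}+[F_{i'},F_i]$, with $[F_{i'},F_i]$ expanded again in the frame $F$ with smooth coefficients, shows that for $f\in C^\infty_c(U_j)$ one has $\partial^{I'}f=\sum_I c^j_{I',I}\,\partial^I f$ with $c^j_{I',I}\in C^\infty(U_j)$, the sum running over $F$-multi-indices $I$ with $|I|\le|I'|$ and $\partial^I$ in the canonical ordering. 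Dualizing, $\partial^{I'}\mu_{j,I'}=\sum_I(-1)^{|I|+|I'|}\partial^I\bigl(c^j_{I',I}\mu_{j,I'}\bigr)$, and each $c^j_{I',I}\mu_{j,I'}$ is again a finite signed measure supported in $\supp\chi_j$. Collecting terms, $\chi_j\eta=\sum_I\partial^I\lambda_{j,I}$, where $\lambda_{j,I}$ is a finite signed measure supported in $\supp\chi_j$, nonzero only for the finitely many $I$ with $|I|\le N_j$.

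Finally I would set $\nu_I:=\sum_j\lambda_{j,I}$. Since $\{\supp\chi_j\}$ is locally finite, this sum is locally finite and defines a signed Radon, hence Borel, measure on $P$; summing the previous displays and rearranging freely (legitimate because all sums involved are locally finite) gives $\eta=\sum_j\chi_j\eta=\sum_I\partial^I\nu_I$. For the last assertion, given a compact $K\subseteq P$ choose an open $V\supseteq K$ meeting only finitely many $\supp\chi_j$, say for $j$ in a finite set $J$; then for every $I$ outside the finite set $\{I:\lambda_{j,I}\ne 0\text{ for some }j\in J\}$ the measure $\nu_I$ vanishes on $V$, so $K\cap\supp\nu_I=\varnothing$. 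The only step I expect to require care is the bookkeeping in this conversion to ordered $F$-derivatives: one must verify that the commutator corrections and Leibniz terms never raise the differential order and always carry smooth coefficients, so that multiplying them into the $\mu_{j,I'}$ keeps these objects genuine measures. Everything else is a routine application of the local structure theorem together with the standard gluing via a locally finite partition of unity.
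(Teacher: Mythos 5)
Your proposal is correct, and the skeleton matches the paper's: both split $\eta$ by a locally finite partition of unity, apply a local structure theorem to each compactly supported piece, and resum. The implementations differ in a meaningful way. The paper pushes each piece $\xi_j\eta$ into a coordinate cube, extends it periodically to get a \emph{tempered} distribution, and invokes the structure theorem that a tempered distribution is a finite-order derivative of a continuous function of polynomial growth; it then pulls back and treats that continuous function as a measure. You instead stay local: each $\chi_j\eta$ is a compactly supported distribution of finite order, so the classical Hahn--Banach/Riesz argument gives $\chi_j\eta$ directly as a finite sum of coordinate derivatives of finite signed measures. Your route is more elementary (no tempered distributions, no periodic extension, no polynomial-growth bookkeeping) and gives genuine measures immediately rather than via ``continuous function of polynomial growth, restricted and pulled back.'' You also spell out the step of rewriting coordinate derivatives $\partial^{I'}$ in terms of ordered $F$-derivatives $\partial^I$ via the frame change, Leibniz expansion, and commutator reordering; the paper compresses this into the assertion that each summand ``can be expressed as a finite sum of derivatives of a continuous function'' without exhibiting the passage from the chart frame to the generating tuple $F$. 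Your bookkeeping here is right: both the Leibniz terms (from differentiating the coefficients $a^j_{ik}$) and the bracket corrections only produce multi-indices of order $\le|I'|$ with smooth coefficients, so multiplying them into the $\mu_{j,I'}$ preserves the measure property; and the final locally finite resummation and the ``finitely many $I$ meet $K$'' assertion follow exactly as you say.
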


\begin{proof}
Take a partition of unity $\{\xi_j\}_{j\in \N}\subseteq C^\infty_c(P)$ of $P$, that is, a countable set of smooth functions $\xi_i$ with compact support such that $\sum_j\xi_j(p)=1$ and, on each compact set $K\subseteq P$, the restriction $\xi_j|_K\equiv 0$ for all but finitely many $j\in\N$.  We make the further assumption that the support of each of the functions $\xi_j$ is contained in an open set $U_j\subseteq P$ that is diffeomorphic to a cube $(0,1)^{n}$, and we let $\phi_j:U_j\to (0,1)^{n}$ be the corresponding diffeomorphism. 

We let $\tilde\eta_j$ be the distribution on $\R^n$ that results from pushing $\xi_j\eta$ forward to the cube $(0,1)^{n}$ and extending periodically. In other words, for all rapidly-decreasing (Schwartz) functions $f\in C^\infty(\R^{n})$, we let $\tau_zf(x)=f(x-z)$ and
\[
\langle \tilde\eta_j,f\rangle= \sum_{z\in\Z^{n}}\langle \eta,\xi_j\cdot (\tau_zf)\circ\phi_j\rangle.
\]
Like all periodic distributions, $\tilde\eta_j$ is a tempered distribution.
We have
\begin{lem}\label{lem:structuretempered}
 Every tempered distribution is a derivative of finite order of some continuous function of polynomial growth.
\end{lem}

For a proof, see for example {\cite[Theorem 3.8.1]{friedlander}}.

Let $\zeta_j$ be the continuous function of polynomial growth corresponding to $\tilde\eta_j$ (as furnished by Lemma \ref{lem:structuretempered}) and let $I_j$ be the multi-index corresponding to the derivative in the lemma, so that
\[\tilde\eta_j=\partial^{I_j}\zeta_j.\]
Let $D_j$ be the (smooth) differential operator on $P$ such that $\phi_j^*\partial^{I_j} = D_j\phi_j^*$, where $\phi^*_j$ denotes the pullback by $\phi_j$. Observe that
\[\xi_j\eta=\phi_j^*\partial^{I_j}\zeta_j=D_j\phi^*_j\zeta_j.\]
Since $\zeta_j$ is a continuous function, $\phi^*_j\zeta_j$ is piecewise continuous, and hence it induces a measure on $U_j$.
Then we can write
\[\eta=\sum_j\xi_j\eta=\sum_j D_j\phi_j^*\zeta_j,\]
and since each of the summands on the right can be expressed as a finite sum of derivatives of a continuous function, this proves the lemma.
\end{proof}

\subsection{Variations}
\label{sec:general_deformations}

Let $\mu_s$ be a family of Borel measures on the manifold $P$ parameterized by a real parameter $s$ with values in an open interval $J\subseteq\R$ that contains 0. We say that the family $\mu_s$ is \emph{differentiable} at $s=0$ if there is a distribution $\eta\in\distributionsP$ such that, for every function $f\in C^\infty_c(P)$,
\begin{equation}\label{eq:derivativefamily}
\left.\frac{d}{ds}\right|_{s=0}\int f\,d\mu_s=\langle\eta,f\rangle.
\end{equation}
The distribution $\eta$ is the \emph{derivative} $d\mu_s/ds|_{s=0}$ of $\mu_s$ at $s=0$. 

If the limit \eqref{eq:derivativefamily} exists only when restricting to $s\geq 0$, we say that the family $\mu_s$ is \emph{differentiable on one side}, and that the distribution $\eta$ is the \emph{one-sided derivative} of $\mu_s$ at $s=0$. 

We first consider the case of two-sided derivatives in Section \ref{sec:twosided}, and then in Section \ref{sec:onesided} we explain what happens for the case of one-sided derivatives.

\begin{rmk}
This is just one way to define differentiability of families of distributions; other ways have been explored for example in \cite{smolyanov}.
\end{rmk}

\subsubsection{Two-sided derivatives}
\label{sec:twosided}
If we do not restrict to the case of positive measures, we get the following result. 

\begin{prop}\label{prop:characterizationsigned}
  For every Borel measure $\mu$ and every distribution $\eta$ on $P$, there exists a family $(\mu_s)_s$ of signed Borel measures with 
  \[\mu_0=\mu\quad\textrm{and}\quad\left.\frac{d\mu_s}{ds}\right|_{s=0}=\eta.\]
\end{prop}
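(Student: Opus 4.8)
The plan is to realize $\eta$ as a derivative by perturbing $\mu$ in the direction $\eta$, but through a genuine signed \emph{measure} obtained by mollification, with the mollification parameter tied to $s$ so that it shrinks as $s\to 0$. Fix the generating tuple $F=(F_1,\dots,F_\ell)$ and a mollifier $\psi$, and for $\varepsilon>0$ set $\psi_\varepsilon(t)=\varepsilon^{-1}\psi(t/\varepsilon)$, which is again a mollifier. By Lemma \ref{lem:measuredistribution}, $\psi_\varepsilon *_F\eta$ is a smooth signed Borel measure for every $\varepsilon>0$. I would then define
\[
\mu_s=\mu+s\,(\psi_{|s|}*_F\eta)\quad(s\neq 0),\qquad \mu_0=\mu ,
\]
so that each $\mu_s$ is a signed Borel measure and $\mu_0=\mu$. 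It remains to verify that $d\mu_s/ds|_{s=0}=\eta$ in the sense of \eqref{eq:derivativefamily}.

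For a fixed $f\in C^\infty_c(P)$ one has, directly from the definition of $\psi*_F\eta$,
\[
\frac1s\left(\int f\,d\mu_s-\int f\,d\mu\right)=\langle\psi_{|s|}*_F\eta,\,f\rangle=\langle\eta,\,\psi_{|s|}*_Ff\rangle ,
\]
so the claim reduces to $\langle\eta,\psi_\varepsilon*_Ff\rangle\to\langle\eta,f\rangle$ as $\varepsilon\to 0^+$; since $\psi_{|s|}$ depends only on $|s|$, this one-sided statement in $\varepsilon$ yields the full two-sided derivative in $s$. As $\eta$ is a distribution, it suffices to show $\psi_\varepsilon*_Ff\to f$ in the topology of $C^\infty_c(P)$, i.e. that the supports of $\psi_\varepsilon*_Ff$ stay inside a common compact set for small $\varepsilon$ and that $\psi_\varepsilon*_Ff\to f$ uniformly together with all derivatives. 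Both are standard properties of mollification along flows: each $P_i$ averages $f$ over the flow of $F_i$ for times $t$ in $\supp\psi_\varepsilon\subseteq[-c\varepsilon,c\varepsilon]$, so for small $\varepsilon$ the flow moves $\supp f$ only slightly (keeping supports compact and uniformly bounded), while a Taylor expansion of $t\mapsto f\circ\phi^i_t$ together with the smooth dependence of $\phi^i_t$ and its $x$-derivatives on $t$ gives $P_i(f)\to f$ with all derivatives; iterating over $i=1,\dots,\ell$ gives $\psi_\varepsilon*_Ff\to f$ in $C^\infty_c(P)$.

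The only real content beyond bookkeeping is this last convergence, and I expect the (minor) technical point to be making the convergence of the derivatives precise, i.e. controlling $\partial^I(f\circ\phi^i_t)$ uniformly as $t\to 0$; this follows from joint smoothness of the flow map $(x,t)\mapsto\phi^i_t(x)$ and is also available in the standard theory of mollifiers (cf. \cite{friedlander}). Everything else — that rescaled mollifiers are mollifiers, that $\psi_\varepsilon*_F\eta$ is a signed measure, and that $\mu_s$ is a signed Borel measure with $\mu_0=\mu$ — is immediate from the definitions and Lemma \ref{lem:measuredistribution}. (One could equivalently phrase the key step as the convergence $\psi_\varepsilon*_F\eta\to\eta$ in $\distributionsP$.)
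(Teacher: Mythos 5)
Your proof is correct, and it takes a noticeably different route from the paper's. The paper defines $\mu_s=\psi*_{s^2F}(\mu_0+s\eta)$, smoothing \emph{both} the base measure and the perturbation simultaneously with a mollification scale $s^2$, and then appeals to Lemma~\ref{lem:convolutionderivative} (the evenness of $s\mapsto\psi*_{s^2F}f$, so its derivative at $0$ vanishes) to kill the contribution of the smoothed $\mu_0$ and isolate $\eta$ in the derivative. You instead leave $\mu$ untouched, set $\mu_s=\mu+s\,(\psi_{|s|}*_F\eta)$ (which, after a change of variable, is the same operator as $\psi*_{|s|F}\eta$), and reduce everything to the approximate-identity property $\psi_\varepsilon*_Ff\to f$ in $C^\infty_c(P)$. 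Both are correct constructions. What the paper's version buys is that it reuses Lemma~\ref{lem:convolutionderivative}, which the author also needs later (e.g.\ in the proof of Lemma~\ref{lem:variationpoint}), and the $s^2$ scaling makes the derivative computation a one-liner; what yours buys is that it avoids the evenness trick and makes the mechanism more transparent — it is the plain statement that mollification is an approximate identity on test functions, so the difference quotient $\langle\eta,\psi_{|s|}*_Ff\rangle$ converges to $\langle\eta,f\rangle$ by continuity of $\eta$ on $C^\infty_c(P)$. You correctly identify the only nontrivial technical point (uniform control of all derivatives of $\psi_\varepsilon*_Ff$ as $\varepsilon\to 0$, together with uniform compactness of supports), and your appeal to joint smoothness of the flow maps is the right justification.
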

We prove this below. For the proof, we need to define a family of distributions $(\eta_s)_s$ to be \emph{differentiable} if there is a distribution $\nu$ such that for every function $f\in C^\infty_c(P)$, 
\[\left.\frac{d}{ds}\langle \eta_s,f\rangle\right|_{s=0}=\langle \nu,f\rangle.\]
\begin{lem}\label{lem:convolutionderivative}
For a generating tuple $F$ of vector fields, a mollifier $\psi$, and any family of distributions $(\eta_s)_s$ differentiable at $s=0$, we have
\[\left.\frac{d}{ds}\right|_{s=0}\psi*_{s^2F}\eta_s=\left.\frac{d\eta_s}{ds}\right|_{s=0}.\]
\end{lem}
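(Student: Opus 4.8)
First I would recall the notation: $\psi*_{s^2F}\eta_s$ means the convolution of the distribution $\eta_s$ with the mollifier $\psi$ along the flows of the rescaled vector fields $s^2F = (s^2F_1,\dots,s^2F_\ell)$. By the duality definition in Section \ref{sec:distsmoothing}, for $f\in C^\infty_c(P)$ we have $\langle \psi*_{s^2F}\eta_s, f\rangle = \langle \eta_s, \psi*_{s^2F}f\rangle$, where $\psi*_{s^2F}f = P_1^{(s)}\cdots P_\ell^{(s)}(f)$ and $P_i^{(s)}(f)(x)=\int_\R f\circ\phi^i_{s^2 r}(x)\,\psi(r)\,dr$. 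The key structural observation is that when $s=0$, the flow $\phi^i_{0}$ is the identity, so $\psi*_{0\cdot F}f = f$ for every $f$ (using $\int\psi=1$). The plan is therefore to differentiate the product $\langle \eta_s, \psi*_{s^2F}f\rangle$ at $s=0$ using a Leibniz-type argument, showing that the term coming from differentiating $\psi*_{s^2F}f$ vanishes because of the $s^2$ scaling, leaving only the term from differentiating $\eta_s$, which by hypothesis is $\langle d\eta_s/ds|_{s=0}, f\rangle = \langle d\eta_s/ds|_{s=0}, \psi*_{0\cdot F}f\rangle$.

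The main steps, in order. (1) Show that $s\mapsto \psi*_{s^2F}f$ is a $C^1$ curve in $C^\infty_c(P)$ (with respect to every seminorm $\sup_K|\partial^I(\cdot)|$), with derivative at $s=0$ equal to zero; this is where the $s^2$ rescaling is essential — differentiating $f\circ\phi^i_{s^2r}(x)$ in $s$ produces a factor $2s r$, which vanishes at $s=0$. One checks uniform control of all derivatives in $x$ on compact sets because the supports stay inside a fixed compact set for $|s|$ small and $\psi$ has compact support. (2) Write, for fixed $f$, the difference quotient
\[
\frac1s\Big(\langle \eta_s,\psi*_{s^2F}f\rangle - \langle\eta_0,f\rangle\Big)
= \frac1s\langle \eta_s - \eta_0,\ \psi*_{s^2F}f\rangle + \frac1s\langle\eta_0,\ \psi*_{s^2F}f - f\rangle.
\]
The second term tends to $0$ by step (1). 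For the first term, combine the differentiability of $(\eta_s)$ with the convergence $\psi*_{s^2F}f\to f$ in $C^\infty_c(P)$; the difficulty is that $\eta_s-\eta_0$ and the moving test function $\psi*_{s^2F}f$ both depend on $s$, so one cannot simply pass to the limit termwise. (3) Resolve this by appealing to the structural representation of distributions: locally near the (fixed) compact support of $f$ write each $\eta_s$ via Lemma \ref{lem:decompositionmeasure} — or more simply, use the seminorm bound defining a distribution together with a uniform bound on finitely many seminorms of $(\psi*_{s^2F}f - f)/s$ — to conclude that $\frac1s\langle\eta_s-\eta_0,\psi*_{s^2F}f - f\rangle \to 0$, and that $\frac1s\langle\eta_s-\eta_0, f\rangle\to\langle d\eta_s/ds|_{s=0}, f\rangle$.

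I expect the main obstacle to be step (2)–(3): the interchange of the limit $s\to0$ with the pairing when \emph{both} arguments move. The clean way around it is to split off the "diagonal" error $\frac1s\langle\eta_s-\eta_0,\ \psi*_{s^2F}f - f\rangle$ and bound it: since $\psi*_{s^2F}f - f = O(s^2)$ in every $C^k_K$ seminorm (a consequence of step (1) sharpened to a second-order estimate, again thanks to the $s^2$ factor), while $\eta_s - \eta_0 = O(s)$ tested against a fixed bounded family — here one uses that for fixed $f$, differentiability of $(\eta_s)$ gives $\langle\eta_s-\eta_0,g\rangle = O(s)$ uniformly for $g$ ranging in a seminorm-bounded set supported in a fixed compact, which follows from the uniform boundedness principle for distributions — the product is $O(s^3)$, hence its quotient by $s$ vanishes. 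Once that is in place, the remaining computation is routine:
\[
\left.\frac{d}{ds}\right|_{s=0}\langle\psi*_{s^2F}\eta_s,f\rangle
= \left.\frac{d}{ds}\right|_{s=0}\langle\eta_s,f\rangle
= \left\langle \left.\frac{d\eta_s}{ds}\right|_{s=0},\,f\right\rangle,
\]
which is the claim.
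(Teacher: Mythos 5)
Your proof is correct and takes essentially the same approach as the paper: the paper's (one-sentence) proof rests on exactly the observation you make in step (1), that $s\mapsto\psi*_{s^2F}f$ is even in $s$ and hence has vanishing derivative at $s=0$. You go further than the paper by spelling out the Leibniz-type splitting and the uniform-boundedness (Banach--Steinhaus) argument needed to control the cross term $\frac1s\langle\eta_s-\eta_0,\psi*_{s^2F}f-f\rangle$, which the paper leaves implicit; this is a genuine and welcome filling-in of detail rather than a different route.
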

\begin{proof}
This follows from the fact that for $f\in C^\infty_c(P)$, $s\mapsto\psi*_{s^2F} f$ is an even function, so its derivative at $s=0$ must vanish.
\end{proof}
\begin{proof}[Proof of Proposition \ref{prop:characterizationsigned}]
Take a mollifier $\psi$ and a generating tuple $F$ of vector fields. Then, as follows from Lemmas \ref{lem:measuredistribution} and \ref{lem:convolutionderivative}, the family $\mu_s=\psi*_{s^2F}(\mu_0+s\eta)$ has the required properties.
\end{proof}

For families of \emph{positive} measures, the situation is different.

 \begin{thm}\label{thm:generalfamilies}
  Let $\mu$ be a positive Borel measure and let $\eta$ be a distribution.
  Then there exists a family $\mu_s$ of positive measures with $\mu_0=\mu$ and derivative $d\mu_s/ds|_{s=0}=\eta$ if, and only if, $\eta$ satisfies the following condition:
  \begin{enumerate}
\item[{$\mathrm{\diffcondition}$}]   $\langle\eta,f\rangle=0$ for every nonnegative $f\in\C^\infty_c(P)$ that vanishes indentically on $\supp\mu$.
  \end{enumerate}
  If $\mu$ is a probability measure and $\eta$ additionally satisfies that $\langle\eta,1\rangle=0$, then $\mu_s$ can be realized as a family of probability measures.
 \end{thm}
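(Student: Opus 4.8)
The plan is to prove the necessity direction first, since it is short, and then concentrate on the construction for the sufficiency direction. For necessity: suppose $\mu_s$ is a family of positive measures with $\mu_0=\mu$ and derivative $\eta$. If $f\in C^\infty_c(P)$ is nonnegative and vanishes identically on $\supp\mu$, then $\int f\,d\mu_0=0$, and since each $\mu_s$ is positive and $f\geq 0$ we have $\int f\,d\mu_s\geq 0$ for all $s$ in the interval $J$. Thus $s\mapsto\int f\,d\mu_s$ has a minimum at $s=0$, so its derivative there vanishes, i.e. $\langle\eta,f\rangle=0$. This is exactly Condition \diffcondition.

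For sufficiency I would build $\mu_s$ by hand. The naive guess $\mu_s=\mu+s\eta$ fails on two counts: $\eta$ is a genuine distribution, not a measure, and even if it were, $\mu+s\eta$ need not be positive for $s<0$. The smoothing trick of Lemma \ref{lem:convolutionderivative} handles the first problem: convolving with $\psi*_{s^2F}$ turns a distribution into a smooth signed measure while changing the derivative by an even function, hence not at all at $s=0$. So I would start from $\tilde\mu_s=\psi*_{s^2F}(\mu+s\eta)$, which is a smooth signed measure for $s\neq 0$ and has the right derivative, but may still be negative somewhere. To fix positivity, the idea is to add a correction term that is nonnegative, large enough to dominate the negative part of $\tilde\mu_s$, vanishes to first order at $s=0$, and is supported where $\mu$ already lives (so that Condition \diffcondition\ guarantees it does not interfere with the derivative). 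Concretely, using the structural decomposition of $\eta$ (Lemma \ref{lem:decompositionmeasure}) together with a locally finite exhaustion of $P$ by compacts, one can find a smooth positive measure $\rho$ with $\supp\rho\subseteq\supp\mu$ such that on each compact set the smooth density of $\tilde\mu_s$ plus $C(s)\rho$ is nonnegative for a suitable $C(s)=o(s)$ as $s\to 0$ (for instance $C(s)=|s|^{3/2}$ times a locally defined bound, glued by a partition of unity). Then $\mu_s:=\tilde\mu_s+C(s)\rho$ is positive, has $\mu_0=\mu$, and has derivative $\eta$ because $C(s)=o(s)$.

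The main obstacle is making the positivity correction work globally and uniformly enough that the added term is genuinely $o(s)$: the negative part of $\tilde\mu_s$ can be unbounded near infinity on a noncompact $P$, and the decay rate of $C(s)$ one is allowed depends on how badly $\eta$ behaves on each compact piece. The right way around this is to localize with the partition of unity $\{\xi_j\}$ from Lemma \ref{lem:decompositionmeasure}, choose on each $U_j$ a correction $C_j(s)\rho_j$ with $C_j(s)\to 0$ faster than $s$, and then set $C(s)\rho=\sum_j C_j(s)\rho_j$; since on any fixed compact set only finitely many $j$ contribute, the sum is a well-defined measure and the derivative computation is legitimate. Here one must also check that one can always arrange $\supp\rho_j\subseteq\supp\mu$: this needs that near any point of $\supp\tilde\mu_s\setminus\supp\mu$ the density of $\tilde\mu_s$ is already nonnegative for small $s$, which follows because there $\mu+s\eta$ restricted to a neighborhood disjoint from $\supp\mu$ is $s\eta$, and Condition \diffcondition\ forces $\eta$ to be $\leq 0$-free in a suitable sense on test functions supported off $\supp\mu$ — more precisely, one uses that a distribution annihilating all nonnegative compactly supported functions vanishing on a closed set must itself vanish on the open complement of that set. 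Finally, for the probability case, if additionally $\langle\eta,1\rangle=0$ then $\int d\tilde\mu_s=1$ exactly (convolution by a mollifier preserves total mass, and $\int d(\mu+s\eta)=1+s\langle\eta,1\rangle=1$), so one only needs to renormalize by the constant $1+C(s)\rho(P)$; on noncompact $P$ one instead chooses the $\rho_j$ with $\sum_j C_j(s)\rho_j(P)$ finite and renormalizes, which again perturbs the derivative only by an $o(s)$ amount.
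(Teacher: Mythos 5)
Your necessity argument is the same as the paper's and is fine. The gap is in the sufficiency direction, specifically in the claim that the positivity defect of $\tilde\mu_s=\psi*_{s^2F}(\mu+s\eta)$ can be offset by an $o(s)$ correction supported on $\supp\mu$. That claim is false even in the simplest nontrivial case.

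Take $P=\R$, $\mu=\delta_0$, and $\eta=\partial\delta_0$; this $\eta$ satisfies \diffcondition. Then $\psi*_{s^2F}(\mu+s\eta)$ has density roughly $s^{-2}\psi(x/s^2)+s^{-3}\psi'(x/s^2)$. For small $s$ the second term dominates wherever $\psi'\neq 0$: the density is negative with magnitude $\sim s^{-3}$ on a set of Lebesgue measure $\sim s^2$, so the total negative mass is $\sim 1/s$, which diverges. No correction term $C(s)\rho$ with $C(s)=o(s)$ and $\rho$ a fixed measure can dominate this, either in mass or pointwise. The problem is structural, not a matter of choosing the scale: for $\psi*_{t(s)F}(\mu+s\eta)$ to have nonnegative density one needs $t(s)\gtrsim s$, but for Lemma \ref{lem:convolutionderivative} (i.e., for the smoothing not to perturb the derivative at $s=0$) one needs $t(s)=o(\sqrt{s})$, and these two requirements are incompatible. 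A further difficulty: in this example $\supp\mu=\{0\}$, so there is no ``smooth positive measure $\rho$ with $\supp\rho\subseteq\supp\mu$'' at all, other than zero.

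The paper's proof sidesteps this entirely. It reduces, via the structural decomposition $\eta=\sum_I\partial^I\nu_I$, to a point-supported model $\eta_p=\partial^I\delta_p$ (Lemma \ref{lem:variationpoint}). There, rather than correcting $\mu+s\eta_p$ for positivity, it approximates $\eta_p$ by densities $g_j\mu_j$ where $\mu_j=\psi*_{r_{j,2}^2F}\mu$ is a smoothing of $\mu$ at an $s$-dependent scale and $\|g_j\|_\infty\leq 1$ (Lemma \ref{lem:infiszero}). The family is then assembled as a telescoping sum $\nu_s=(1-2^{1-i})\mu_i+\sum_{j\geq i}2^{-j}(1+g_j\sgn s)\mu_j$; the bound $\|g_j\|_\infty\leq 1$ is what guarantees positivity by construction, rather than by an after-the-fact correction. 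Condition \diffcondition\ enters not to control the support of a correction term but to restrict which derivatives $\partial^I$ can occur at all (Remark \ref{rmk:examplesandsupp} and the discussion of the null space $V$ in the proof of Lemma \ref{lem:variationpoint}), which is what makes the approximation by bounded densities $g_j\mu_j$ possible. To salvage your plan you would have to let $\rho$ depend on $s$ and be spread out at the same scale as the smoothing, at which point you are essentially forced into the paper's construction.
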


\begin{rmk}\label{rmk:examplesandsupp}
 Condition \diffcondition\ implies that $\supp\eta\subseteq\supp\mu$.
 Apart from this, Condition \diffcondition\ is relevant only when $\supp\mu$ has parts that are very thin --- only one point thick. 
 
 For example, if $P=\R$, and if $\mu$ is the Dirac delta $\delta_0$, then Condition \diffcondition\ implies that $\eta$ must be of the form $A\delta_0+B\partial\delta_0$, $A,B\in\R$. Indeed, take a cutoff function $\rho\colon \R\to\R\in C^\infty_c(\R)$ (i.e., $\rho\geq0$, $\rho\equiv 1$ in a neighborhood of 0 and $\rho\equiv 0$ outside a slightly larger neighborhood). Then taking $f(x) = \rho(x)\sum_{i\geq2}c_i x^i$ (with $c_2$ large enough to ensure that $f\geq0$) we see that $\eta$ must be of the proposed form in order to comply with Condition \diffcondition.
 
 On the other hand, if we again had $P=\R$, but now $\mu=\chi_{[0,1]}$ the characteristic function on the unit interval, then as long as $\supp\eta\subseteq\supp\mu$, $\eta$ can be any distribution and still comply with Condition \diffcondition.
\end{rmk}

\begin{rmk}
 The family $\mu_s$ can always be realized as a family of smooth measures (except maybe at $s=0$). 
 Indeed, if $\mu_s$ is any family of measures that is differentiable at $s=0$, $\psi$ is a mollifier, and $F$ is a generating tuple, then the measure $\tilde\mu_s=\psi*_{s^2F}\mu_s$ has the same derivative at 0 and the same mass as $\mu_s$, and $\tilde\mu_s$ is a positive measure if $\tilde\mu_s$ is. By Lemma \ref{lem:measuredistribution}, the measure $\tilde\mu_s$ is a smooth density for all $s\neq 0$. 
\end{rmk}

 \begin{lem}\label{lem:variationpoint}
  Fix a point $p\in \supp\mu\subseteq P$. Let $\eta_p$ be a distribution supported on $p$ that satisfies Condition \diffcondition.
 Then there is a family of positive measures $\mu_{s}^p$ such that $\mu_0^p=\mu$ and
 \[\left.\frac{d\mu_{s}^p}{ds}\right|_{s=0}=\eta_p.\]
 Moreover, the dependence of $\mu_s^p$ on $p$ is measurable.
 
  If $\mu$ is a probability measure and additionally $\langle\eta_p,1\rangle=0$, then $\mu_s^p$ can be realized as a family of probability measures.
\end{lem}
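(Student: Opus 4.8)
The plan is to reduce everything to a local computation in a chart around $p$, use Remark \ref{rmk:examplesandsupp}-type reasoning to pin down the structure of $\eta_p$, and then write down an explicit family by hand. First I would take a chart $\varepsilon\colon U\to W\subseteq\R^m$ with $\varepsilon(p)=0$; since $\eta_p$ is supported at the single point $p$, its pushforward $\varepsilon_*\eta_p$ is a distribution supported at the origin, hence (by the classical structure theorem for distributions supported at a point, a special case of Lemma \ref{lem:decompositionmeasure}) a finite linear combination $\sum_{|I|\le N} c_I\,\partial^I\delta_0$. Now Condition \diffcondition\ constrains which $c_I$ can be nonzero: testing against functions of the form $f=\rho\cdot x^J$ with $\rho$ a cutoff and $|J|\ge 2$ (made nonnegative by adding a large multiple of $|x|^2\rho$, exactly as in Remark \ref{rmk:examplesandsupp}) forces $c_I=0$ whenever $|I|\ge 2$, \emph{unless} $p$ is a one-point-thick part of $\supp\mu$ — but more importantly, it forces: either $p$ is isolated in $\supp\mu$ and $\varepsilon_*\eta_p$ has the form $A\delta_0+\sum_i B_i\partial_i\delta_0$, or $\supp\mu$ contains a curve through $p$ and only derivatives along that curve's tangent survive, or $\supp\mu$ is fat near $p$ and no restriction beyond $\supp\eta_p\subseteq\{p\}$ is needed. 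In every case one extracts a vector $v\in\R^m$ (the ``direction'' of the first-order part) and a scalar $A$ (the zeroth-order part) together with the constraint that $v$ points along $\supp\mu$ in an appropriate infinitesimal sense.

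Next I would build the family. For the zeroth-order part $A\delta_p$ (or $A\mu$ locally), I would simply use $(1+sA')\mu$ restricted suitably, or redistribute mass using a family of the form $\mu+s\,h\,\mu$ for a bounded measurable $h$; positivity holds for $|s|$ small. For the first-order part $\partial_v\delta_p$, the model is the translation family: letting $\phi^v_s$ denote the time-$s$ flow of a vector field extending $v$, the pushforward $(\phi^v_s)_*\mu$ has derivative at $s=0$ equal to $-\mathcal L_v\mu = -\partial_v\mu$, which localized at $p$ gives exactly $-\partial_v\delta_p$ (up to the sign bookkeeping the paper uses in the introductory $\delta_t$ example). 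The point where Condition \diffcondition\ is essential is that the flow must move $\supp\mu$ \emph{inside} a set on which $\mu$ still lives — but since $v$ was extracted precisely as a direction tangent to $\supp\mu$ (or $p$ is isolated, in which case moving a Dirac delta freely is harmless, or $\supp\mu$ is fat, in which case small flows preserve positivity on the support), the pushforward is again absolutely continuous with respect to something supported in $\supp\mu$ to leading order; any discrepancy is $O(s^2)$ and killed by a convolution $\psi*_{s^2F}$ à la Lemma \ref{lem:convolutionderivative} and the final Remark, which also upgrades the measures to smooth ones. Combining the zeroth- and first-order families (e.g.\ by composition, or by adding their infinitesimal generators and flowing) produces $\mu^p_s$ with the prescribed derivative $\eta_p$; measurability in $p$ follows because every ingredient — the chart, the cutoff, the extracted $(A,v)$, the flow — can be chosen to depend measurably on $p$ (use a fixed atlas and a measurable selection of charts).

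For the probability case, note that $(\phi^v_s)_*$ automatically preserves total mass, and for the zeroth-order part one replaces $A\delta_p$ by a mass-preserving redistribution $A(\delta_p - \text{something})$; the hypothesis $\langle\eta_p,1\rangle=0$ is exactly what guarantees the zeroth-order coefficients can be arranged to sum to zero, so after the construction $\mu^p_s$ has constant total mass $1$. Finally one checks $\mu^p_s\ge 0$ for $s$ in a small interval: the flow part preserves positivity exactly, and the density-perturbation part is positive for $|s|<1/\|h\|_\infty$.

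I expect the \textbf{main obstacle} to be the case where $\supp\mu$ near $p$ is ``thin but not a single point'' — e.g.\ a $C^1$ arc or a lower-dimensional set — so that Condition \diffcondition\ permits a genuine first-order derivative $\partial_v\delta_p$ only for $v$ tangent to that set, and one must produce a positivity-preserving family realizing it. The resolution is to choose the extending vector field to be tangent to (a smooth approximation of, or the tangent cone of) $\supp\mu$ along the relevant stratum, so that $(\phi^v_s)_*\mu$ stays supported within $\supp\mu$ to first order; the error terms, together with the need to realize higher-order derivatives $\partial^I\delta_p$ that survive \diffcondition\ in the fat case, are absorbed by the $\psi*_{s^2F}$ smoothing trick already used in the proof of Proposition \ref{prop:characterizationsigned}. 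Verifying that this can all be done measurably in $p$, uniformly enough that the interval of valid $s$ does not shrink to $0$, is the delicate bookkeeping, but it is bookkeeping rather than a conceptual difficulty.
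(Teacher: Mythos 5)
There is a genuine gap, and it sits exactly where you flagged the ``main obstacle.'' Your classification of the admissible $\eta_p$ under Condition \diffcondition\ starts out wrong (``forces $c_I=0$ whenever $|I|\ge 2$'') and is never fully corrected: the paper isolates the subspace $V\subseteq T_pP$ annihilated by the Hessians at $p$ of all nonnegative test functions vanishing on $\supp\mu$, and shows that \diffcondition\ permits at most \emph{one} derivative transverse to $V$ but \emph{arbitrarily many} derivatives along $V$. When $\supp\mu$ is locally an arc or a fat set through $p$, distributions like $\partial^2_{v}\delta_p$ or higher along tangent directions genuinely arise, and a construction that only ``extracts a vector $v$ and a scalar $A$'' cannot produce them. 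The flow construction $(\phi^v_s)_*\mu$ is inherently first order in $s$: its $s$-derivative at $0$ is $-\mathcal L_v\mu$, so iterating or composing flows gives $\partial^I\delta_p$ only as an $O(s^{|I|})$ coefficient, never as the linear coefficient. You thus cannot reach $\partial^2\delta_p$ this way.

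The appeal to the $\psi*_{s^2F}$ smoothing from Proposition \ref{prop:characterizationsigned} does not rescue the construction. In that proposition the family is $\psi*_{s^2F}(\mu+s\eta)$, and $\mu+s\eta$ is a genuinely signed distribution whenever $\eta$ has derivatives; mollifying it does not restore positivity. The entire content of Lemma \ref{lem:variationpoint} is to manufacture higher-order derivatives \emph{while keeping the measures positive}, and that is precisely what the signed trick sidesteps. The paper achieves it by a different mechanism that your proposal does not contain even in outline: it mollifies $\mu$ at parameter scales $r_{j,2}^2$ to obtain measures $\mu_j$ that are absolutely continuous near $p$, shows in Lemma \ref{lem:infiszero} that densities $g_j$ of bounded sup norm make $2^j g_j\mu_j$ approximate $\eta_p$ in a $C^k$-dual metric (this is where the freedom to produce high-order derivatives in the $V$-directions comes from), and then feeds these into a telescoping sum
\[
\nu_s=(1-2^{1-i})\mu_i+\sum_{j\ge i}\frac{1}{2^j}(1+g_j\,\sgn s)\,\mu_j ,\qquad r_{i+1,2}\le|s|<r_{i,2},
\]
which is manifestly a positive measure because $\|g_j\|_\infty\le 1$, and whose $s$-derivative at $0$ is $\eta_p$ by an L'H\^opital computation. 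Your plan has no analogue of this step; it is not bookkeeping but the core idea you would need to supply. (Your treatment of the single transverse derivative via the flow of $\partial/\partial x_u$, by contrast, does match the paper's reduction from $e_0=1$ to $e_0=0$, and the remarks on measurability and the probability-preserving case are in the right spirit.)
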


For the proof of the lemma we will need a metric defined on the space of distributions involving up to $k^{\textrm{th}}$ derivatives, $k\geq 1$, and given by
\begin{equation}\label{eq:distk}
\dist_k(\theta_1,\theta_2)=\sum_{j=1}^\infty \frac{1}{2^j\|f_j\|_k}\left|\langle\theta_1,f_j\rangle-\langle\theta_2,f_j\rangle\right|
\end{equation}
for two distributions $\theta_1$ and $\theta_2$, and with $\{f_j\}_j\subset C^\infty_c(P)$  a sequence of functions that is dense with respect to the $C^k$ norm
\[\|f\|_k=\sum_{|I|\leq k}\sup_{q\in P}|\partial^If(q)|.\]

\begin{proof}[Proof of Lemma \ref{lem:variationpoint}]
Denote by $\posfuncs$ the space of nonnegative functions $f\in C^\infty_c(P)$ that vanish identically on $\supp \mu$.
Let $V\subseteq T_pP$ be the subspace that is null for the Hessians at $p$ of all the functions in $\posfuncs$:
\[V=\{\textrm{$v\in T_pP:\hessian_p f(v,v)=0$ for all $f\in\posfuncs$}\}.\]
Let $m=\dim V\leq n=\dim P$.
Take coordinates $(x_1,x_2,\dots,x_{n})$ around $p$ such that the vectors
\[\frac{\partial}{\partial x_1},\dots,\frac{\partial}{\partial x_{m}}\in T_pP\]
form a basis of $V$ and $\partial/\partial x_1,\dots,\partial/\partial x_{n}$ is an orthonormal basis of $T_pP$. 
%Assume that $p$ corresponds to the origin in these coordinates. 
Then by Lemma \ref{lem:decompositionmeasure} we know that $\eta$ must be a finite linear combination of distributions of the form
\begin{equation*}%\label{eq:normalformdistribution}
\left(\frac{\partial}{\partial x_u}\right)^{e_0}
\left(\frac{\partial}{\partial x_1}\right)^{e_1}
\left(\frac{\partial}{\partial x_2}\right)^{e_2}
\cdots
\left(\frac{\partial}{\partial x_{m}}\right)^{e_{m}}
\delta_p
\end{equation*}
where $e_0\in\{0,1\}$, $u>m$, and the integers $e_1,\dots,e_{m}$ are nonnegative. For reasons analogous to those explained in Remark \ref{rmk:examplesandsupp}, Condition \diffcondition\ makes it impossible to have higher derivatives in the directions outside $V$ (i.e., in the direction of $x_u$ in this expression). 

Let us show that it is enough to prove the lemma for the case in which $e_0=0$. Indeed, if $\nu_s$ is a family of positive measures such that $\nu_0=\mu$ and 
\begin{equation}\label{eq:smallerdistribution}
 \left.\frac{d\nu_s}{ds}\right|_{s=0} = 
\left(\frac{\partial}{\partial x_1}\right)^{e_1}
\left(\frac{\partial}{\partial x_2}\right)^{e_2}
\cdots
\left(\frac{\partial}{\partial x_{m}}\right)^{e_{m}}
\delta_p,
\end{equation}
and if $\phi$ is the flow of the vector field $\partial/\partial x_u$, then 
\[\left.\frac{d}{ds}\phi_s^*\nu_s\right|_{s=0}=
\frac{\partial}{\partial x_u}
\left(\frac{\partial}{\partial x_1}\right)^{e_1}
\left(\frac{\partial}{\partial x_2}\right)^{e_2}
\cdots
\left(\frac{\partial}{\partial x_{m}}\right)^{e_{m}}
\delta_p.
\]
So we will assume that $e_0=0$ and we will focus on finding such a family $\nu_s$. In particular, we will assume that $\eta_p$ is of the form given in the right-hand-side of equation \eqref{eq:smallerdistribution}. In other words, we will assume that it only involves derivatives in the directions of $V$.

 \begin{lem}\label{lem:infiszero}
Let $\psi$ be a mollifier and $F$ a generating tuple of vector fields.
 For $\eta_p$ as in the right-hand-side of equation \eqref{eq:smallerdistribution} and for $k=\sum_{i=1}^{m}e_i$, we have, for all $t>0$,
 \[\inf_g\dist_k(g\cdot(\psi*_{tF}\mu),\eta_p)=0,\]
 where the infimum is taken over all measurable functions $g\colon P\to\R$, and $\dist_k$ is the distance defined in \eqref{eq:distk}.
\end{lem}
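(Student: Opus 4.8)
\emph{Plan.} Fix $t>0$. The plan is to approximate $\eta_p$, in the metric $\dist_k$, by honest (signed) smooth densities of the form $(\partial^I\chi_r/\rho)\cdot\bigl(\psi*_{tF}\mu\bigr)$, where $\chi_r$ is a bump concentrating at $p$ and $\rho$ is the smooth, locally positive density of $\psi*_{tF}\mu$ near $p$; the equality $k=|I|$ is what makes the family of approximants obey a bound that lets one pass to the limit termwise. \textbf{Step 1: the smoothed measure is strictly positive near $p$.} I would first record that $\tilde\mu:=\psi*_{tF}\mu$ is, by Lemma \ref{lem:measuredistribution}, a smooth positive measure; in a chart around $p$ write $\tilde\mu=\rho\,dx$ with $0\le\rho\in C^\infty$. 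The claim I need is that $\rho>0$ on some neighbourhood $U_p\ni p$, and this is the only place the hypotheses $p\in\supp\mu$ and ``$F$ generating'' enter (and the only place one uses that the mollifier is positive near $0$, which I assume, as is standard). Unravelling the definition,
\[\langle\tilde\mu,f\rangle=\int_P\int_{\R^\ell}f\bigl(\phi^\ell_{ts_\ell}\circ\cdots\circ\phi^1_{ts_1}(x)\bigr)\,\psi(s_1)\cdots\psi(s_\ell)\,ds\,d\mu(x);\]
for fixed $x$ the map $s\mapsto\phi^\ell_{ts_\ell}\circ\cdots\circ\phi^1_{ts_1}(x)$ has differential $t\,(F_1(x),\dots,F_\ell(x))$ at $s=0$, which is onto $T_xP$ because $F$ is generating, so this map is a submersion near $s=0$ whose image contains a neighbourhood of $x$. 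Since $\psi>0$ near $0$, the push-forward of $\psi(s_1)\cdots\psi(s_\ell)\,ds$ under it has strictly positive density at $x$; integrating over the $\mu$-positive set of $x$ near $p$ and using continuity of $\rho$ gives $\rho>0$ on some $U_p$.

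\textbf{Step 2: reduction and construction of the approximants.} Next I would note that for any $h\in C^\infty_c(U_p)$ the function $g:=h/\rho$, extended by $0$ off $U_p$, is measurable and satisfies $g\cdot\tilde\mu=h\,dx$; hence it suffices to produce $h_r\in C^\infty_c(U_p)$ with $\dist_k(h_r\,dx,\eta_p)\to0$ as $r\to0$. Working in the chart with $p=0$, the distribution $\eta_p$ is $\partial^I\delta_0$ with $\partial^I=(\partial/\partial x_1)^{e_1}\cdots(\partial/\partial x_m)^{e_m}$ and $|I|=\sum_i e_i=k$. I would fix $\chi\in C^\infty_c(\R^n)$ with $\chi\ge0$, $\int\chi=1$, set $\chi_r(x)=r^{-n}\chi(x/r)$, and take $h_r:=\partial^I\chi_r$, which lies in $C^\infty_c(U_p)$ once $r$ is small.

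\textbf{Step 3: convergence in $\dist_k$.} For each $f_j$ in the dense sequence appearing in \eqref{eq:distk},
\[\langle h_r\,dx,f_j\rangle=(-1)^{|I|}\!\int\chi_r\,\partial^I f_j\xrightarrow[\;r\to0\;]{}(-1)^{|I|}\partial^I f_j(0)=\langle\eta_p,f_j\rangle.\]
The crucial point is that, because $|I|=k$, both $\eta_p$ and every $h_r\,dx$ (for $r$ in a fixed interval $(0,r_0)$) are distributions of order $\le k$ obeying $|\langle\,\cdot\,,f\rangle|\le C\|f\|_k$ for a single constant $C$ (absorbing the comparison between coordinate and intrinsic derivatives on the compact chart); consequently the $j$-th term of the series defining $\dist_k(h_r\,dx,\eta_p)$ is at most $2C\,2^{-j}$, uniformly in $r$. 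Dominated convergence for the series then yields $\dist_k(h_r\,dx,\eta_p)\to0$, and since $h_r\,dx=g_r\cdot\tilde\mu$ with $g_r=h_r/\rho$ measurable, this gives $\inf_g\dist_k\!\bigl(g\cdot(\psi*_{tF}\mu),\eta_p\bigr)=0$.

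\textbf{Expected main obstacle.} Everything except Step 1 is routine mollification plus dominated-convergence bookkeeping. The substantive point is Step 1 --- that convolution along a generating tuple genuinely fattens $\supp\mu$ to a full neighbourhood of each of its points, so that the smoothed density is bounded away from $0$ there --- together with making explicit the (standard) assumption that the mollifier is positive near the origin; once that is in hand the only quantitative ingredient is the identity $|I|=k$, which is exactly what keeps the order-$k$ bounds on $h_r\,dx$ uniform in $r$.
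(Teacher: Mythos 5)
Your proof is correct and takes a genuinely different route from the paper's. The paper argues combinatorially: it asserts that $\supp(\psi*_{tF}\mu)$ contains a neighborhood of $p$, picks points $\{x^j_i\}_i$ in that support at scale $1/j$ in Zariski-general position, solves a linear system for coefficients $c_{ij}$ so that the associated finite-difference scheme converges to $\partial^I\delta_p$ as $j\to\infty$, and finally smears each weighted delta into $c_{ij}/\bar\mu(B_{\varepsilon_j}(x^j_i))$ on a tiny ball to produce the measurable functions $g_j$. You instead prove the stronger local fact that the smoothed density $\rho$ is \emph{strictly positive} near $p$, so that any smooth compactly supported $h$ near $p$ is realized as $g\cdot\bar\mu$ with $g=h/\rho$; taking $h_r=\partial^I\chi_r$ for a shrinking bump, the identity $|I|=k$ gives an $r$-uniform order-$k$ bound, and dominated convergence in the series defining $\dist_k$ finishes the argument. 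Your route is cleaner: it avoids the general-position/linear-system bookkeeping entirely, and it actually supplies a proof of the positivity assertion that the paper leaves implicit. The one shared caveat is your added hypothesis that $\psi>0$ near the origin; the paper's definition of a mollifier does not require it, but the paper's unproved claim that $\supp\bar\mu$ contains a neighborhood of $p$ would fail without something of that kind (e.g.\ for $\mu=\delta_p$ and $\psi$ supported away from $0$), so the assumption is needed in both proofs and should simply be stated.
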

The reader will find the proof of Lemma \ref{lem:infiszero} below.

With $\psi$ and $F$ as in the lemma, let 
\[r_{i,m}=\sum_{j=i}^\infty \frac{1}{2^{jm}}\qquad\textrm{and}\qquad\mu_i=\psi*_{r_{i,2}^2 F}\mu\]
for $i\in\N$.
In particular $r_{i,1}\to0,r_{i,2}\to0$, and $\mu_i\to\mu$ as $i\to+\infty$. Let $k$ be as in Lemma \ref{lem:infiszero}, and denote by $\|\cdot\|_\infty$ de essential supremum norm.
 For each $j\in\N$, take a measurable function $g_j$ such that $\|g_j\|_\infty\leq 1$ and 
\[\dist_k(2^{j}g_j\mu_j,\eta_p)<\frac1{2^j}+\inf_{\|g\|_\infty\leq 1}\dist_k(2^{j}g\mu_j,\eta_p),\]
where the infimum is taken over all measurable functions $g:P\to\R$ with essential supremum $\leq 1$.
With this definition, Lemma \ref{lem:infiszero} implies that if we let $j\to+\infty$, we get $2^{j}g_j\mu_j\to\eta_p$.

We let, for  $r_{i+1,2}\leq |s|<r_{i,2}$,
\[\nu_s=(1-2^{1-i})\mu_{i}+\sum_{j=i}^\infty\frac{1}{2^j}(1+g_j\sgn s)\mu_j.\]
By construction, $\nu_s$ is a family of positive measures such that $\nu_s\to\mu$ as $s\to0$ and its derivative at $s=0$ is $\eta_p$. To see why, first note that, as $s\to0$, we have $i\to+\infty$,
\[(1-2^{1-i})\mu_i+\sum_{j=i}^\infty\frac{\mu_j}{2^j}\to\mu,\] 
and the derivative of that term at $s=0$ vanishes by Lemma \ref{lem:convolutionderivative}. 
The other term vanishes as $s\to0$, and its derivative is the limit, as $s\to0$, of
\[\frac1s\sum_{j=i}^\infty\frac{\sgn s}{2^j}g_j\mu_j\approx 
\frac{1}{r_{i,2}}\sum_{j=i}^\infty\frac1{2^j}g_j\mu_j\underset{\textrm{L'H\^opital}}{\to}\frac{2^{-i}g_i\mu_i}{2^{-2i}}\to\eta_p,
\]
where we applied L'H\^opital's rule because both the sum and $r_{i,2}$ tend to 0 as $s\to0$ (or $i\to+\infty$).

To ensure the measurability of the $p$-dependence of this construction, we further specify the construction as follows. For each $j\in\Z_-$, we take a covering of $P$ by measurable sets $A_j$ of diameter at most $-1/j$. For all $p\in A_i$, we take the same function $g_j$. This ensures that these choices are made in a `measurable' way. The rest of the construction does not depend on arbitrary choices, so the dependence becomes measurable.

The last statement of the lemma follows from the fact that if $\eta_p$ satisfies $\langle\eta_p,1\rangle=0$, then either $g_j$ can be chosen so that $g_j\mu_j$ satisfies this too, or else $e_1=e_2=\dots=e_{n}=0$, and in both cases the coordinates can be picked so that the mass is preserved by the flow $\phi_s$ for small-enough $|s|$.
\end{proof}

\begin{proof}[Proof of Lemma \ref{lem:infiszero}]
This is a local problem and by pushing forward with a chart, we may assume that $P$ is some Euclidean space $\R^n$.
Let $\bar\mu=\psi*_{tF}\mu$. Note that $\supp\bar\mu$ has nonempty interior, and in fact contains a neighborhood of $p$.

 For each $j=1,2,\dots$, let $\{x^j_i\}_{i=1}^\infty\subseteq \supp\bar\mu\subset \R^{n}$ be a sequence of points contained within distance $1/j$ of $p$. We also assume that their Zariski closure is all of $\R^n$ (i.e., that no nonzero polynomial vanishes on all of them simultaneously). For a large-enough finite subset $I_j$ of $\N$, there is always a solution to the problem of finding real numbers $c_{ij}$ such that
 \begin{equation}\label{eq:weightsdef}
 \langle \eta_p,f\rangle=\lim_{h\to0}\frac{1}{h^k}\sum_{i\in I_j}c_{ij}f(h x^j_i)
 \end{equation}
 for all $f\in C^\infty_c(P)$.
 To see this, note that expanding the right-hand-side as Taylor series in $h$ and comparing coefficients, one obtains a linear system in the variables $c_{ij}$, and that this system has solutions if sufficiently many points $x^j_i$ are available and if they are in a sufficiently-general position (which we may assume to be true since their Zariski closure can be made as large as necessary). Equivalently, we have a measure that approximates $\eta_p$: 
 \[h^{-k}\sum_{i\in I_j}c_{ij}\delta_{x^j_i}\to\eta_p\quad \textrm{as}\quad h\to0.\]
 These measures also tend to $\eta_p$ as $j\to\infty$.

 We now approximate those measures with measurable functions.
 For each $j=1,2,\dots$, let $0<\varepsilon_j<j^{-2}$ be small enough that the balls $B_{\varepsilon_j}(x^j_i)$ are disjoint. For $q\in B_{\varepsilon_j}(x^j_i)\cap\supp\bar\mu$ for some $i\in I_j$, let
 \[g_j(q) = \frac{c_{ij}}{\bar\mu(B_{\varepsilon_j}(x^j_i))},\]
 and let $g_j(q)=0$ for all other $q\in P$. Then $g_j\bar\mu\to\eta_p$, and this proves the lemma.
\end{proof}

\begin{proof}[Proof of Theorem \ref{thm:generalfamilies}]
 Assume first that the family $\mu_s$ exists. 
 To prove that Condition \diffcondition\ must hold, let $f\in \posfuncs$ (as defined in the proof to Lemma \ref{lem:variationpoint}), and consider the function
 \[g(s)=\int f\,d\mu_s.\]
 Since $f$ is nonnegative and $\mu_s$ is a positive measure for all $s$, $g$ must be nonnegative as well. Since $g(0)=0$, it must also be true that $g'(0)=0$, and this is equivalent to Condition \diffcondition.
 
 Now assume that we have a measure $\mu$ and a distribution $\eta$ such that Condition \diffcondition\ holds, and let us construct a family $\mu_s$ as in the statement of the theorem. Write $\eta=\sum_I\partial^I\nu_I$ as in Lemma \ref{lem:decompositionmeasure} where the $\nu_I$ are signed measures for each multi-index $I$, and the sum is locally finite. 
 For all $I$ with $|I|>0$ we have
 \begin{multline*}
 \langle\partial^I\nu_I,f\rangle=(-1)^{|I|}\int\partial^Ifd\nu_I
 =(-1)^{|I|}\int\int\partial^If\,d\delta_p\,d\nu_I(p) \\
 =\int\langle\partial^I\delta_p,f\rangle\,d\nu_I(p).
 \end{multline*}
 %The measures $\nu_I$ induce measures $\gamma_J$ on $P$ with locally finite sum $\sum_J\gamma_J$, and a family of distributions $\eta_{p,J}$ supported at $p\in P$ such that for all $f\in C^\infty_c(P)$
 %\[\eta=\sum_J\int \eta_{p,J} d\gamma_J(p).\]
 We take 
 \[\eta_{p,I}=\partial^I\delta_p.\]
 For $\nu_I$-almost all $p$, the distributions $\eta_{p,I}$ also satisfy Condition \diffcondition.
 From Lemma \ref{lem:variationpoint}, we get families $\mu_s^{p,I}$ of measures whose derivatives at 0 are precisely the distributions $\eta_{p,I}$. Thus by linearity of the derivative,
 \[\mu_s=\mu+s\nu_\varnothing+\sum_{|I|>0}\int\mu_s^{p,I}d\nu_I(p)\]
 is a family as in the statement of the theorem in an neighborhood of $s=0$, and can be easily modified to satisfy it for all $s\in\R$.
 
 If $\mu$ is a probability, since each $\mu^{p,I}_s$ preserves the probability for $|I|>0$ and since $\langle \eta,1\rangle=0$ implies that $\nu_\varnothing(P)=0$, we conclude that $\mu_s$ also preserves the probability.
\end{proof}

\subsubsection{One-sided derivatives}
\label{sec:onesided}

Since one-sided differentiability is a less restrictive condition than two-sided differentiability, if we allow signed measures we will again get that all distributions arise as derivatives of such families. Thus Proposition \ref{prop:characterizationsigned} holds word-for-word for one-sided derivatives too.

In the case of positive measures, a small modification to Theorem \ref{thm:generalfamilies} is necessary:

\begin{thm}\label{thm:onesidedfamilies}
Let $\mu$ be a positive Borel measure and let $\eta$ be a distribution. Then there exists a family $\mu_s$, $s\geq 0$, of positive measures $\mu_0=\mu$ and one-sided derivative $\eta=d\mu_s/ds|_{s=0+}$ if, and only if, $\eta$ satisfies the following condition:
\begin{enumerate}
\item[$(\mathrm{Pos}^+)$] $\langle\eta,f\rangle\geq 0$ for every nonnegative $f\in C^\infty_c(P)$ that vanishes identically on $\supp(\mu)$.
\end{enumerate}
If $\mu$ is a probability measure and $\eta$ additionally satisfies that $\langle\eta,1\rangle=0$, then $\mu_s$ can be realized as a family of probability measures.
\end{thm}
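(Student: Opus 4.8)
The plan is to mirror the structure of the proof of Theorem \ref{thm:generalfamilies}, adapting each step to the one-sided setting, and then to observe that the weakening of Condition \diffcondition\ to $(\mathrm{Pos}^+)$ is exactly what is needed. First I would establish necessity: if a family $\mu_s$, $s\geq0$, of positive measures with $\mu_0=\mu$ exists and has one-sided derivative $\eta$, then for every nonnegative $f\in C^\infty_c(P)$ vanishing on $\supp\mu$ the function $g(s)=\int f\,d\mu_s$ is nonnegative with $g(0)=0$, so its right derivative $g'(0+)=\langle\eta,f\rangle$ is $\geq 0$. This is the only place where the argument differs from the two-sided case, and it explains the asymmetry: a two-sided derivative of a function with an interior minimum at $0$ must vanish, but a one-sided derivative need only be nonnegative.

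For sufficiency, the key observation is that a two-sided differentiable family is in particular one-sided differentiable with the same derivative, so every $\eta$ satisfying Condition \diffcondition\ is already handled by Theorem \ref{thm:generalfamilies}. It thus suffices to treat the ``extra'' distributions: given $\eta$ satisfying $(\mathrm{Pos}^+)$, decompose it as $\eta=\eta_0+\eta_1$, where $\eta_0$ satisfies the strict nullity Condition \diffcondition\ and $\eta_1$ carries the ``defect.'' Concretely, following the analysis in the proof of Lemma \ref{lem:variationpoint}, the failure of Condition \diffcondition\ localizes at isolated points $p\in\supp\mu$ where $\supp\mu$ is one-point thick, and there it can only manifest as a nonnegative multiple of $\delta_p$ (or, after integrating against a signed measure via Lemma \ref{lem:decompositionmeasure}, as an atomic part whose total contribution against any $f\in\posfuncs$ is nonnegative by $(\mathrm{Pos}^+)$). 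For such an atomic piece $c\,\delta_p$ with $c\geq 0$, the one-sided family $\mu_s=\mu+sc\,\delta_p$ (for $s\geq0$) is manifestly a family of positive measures with $\mu_0=\mu$ and one-sided derivative $c\,\delta_p$; more generally $\mu_s=\mu+s\cdot\nu^+$, where $\nu^+$ is the nonnegative measure collecting these defect terms, does the job. Adding this to the two-sided family produced by Theorem \ref{thm:generalfamilies} for $\eta_0$ and using linearity of the (one-sided) derivative yields the desired family for $\eta$.

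A cleaner packaging, which I would actually write, avoids the explicit decomposition: set $\mu_s=\tilde\mu_s+s\,\sigma$ for $s\geq 0$, where $\sigma$ is a suitable nonnegative measure making $\eta-\sigma$ satisfy Condition \diffcondition, and $\tilde\mu_s$ is the two-sided family from Theorem \ref{thm:generalfamilies} applied to $\eta-\sigma$; one takes $\sigma$ supported on $\supp\mu$ so that $\mu_s$ is positive for small $s\geq0$, then rescales the parameter to extend to all $s\geq0$. The probability statement follows exactly as in Theorem \ref{thm:generalfamilies}: the two-sided part preserves total mass, and $\langle\eta,1\rangle=0$ together with $\sigma\geq0$, $\langle\sigma,1\rangle=\langle\eta,1\rangle-\langle\eta-\sigma,1\rangle$ forces the added term to preserve mass as well, provided $\sigma$ is chosen with $\langle\sigma,1\rangle=0$, i.e. $\sigma=0$ — so in the probability case $(\mathrm{Pos}^+)$ with $\langle\eta,1\rangle=0$ actually forces $\eta$ to satisfy Condition \diffcondition\ outright and there is nothing new to prove.

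The main obstacle I anticipate is making the decomposition $\eta=(\eta-\sigma)+\sigma$ rigorous and canonical: one must argue from $(\mathrm{Pos}^+)$ that the obstruction to Condition \diffcondition\ is representable by a genuine \emph{nonnegative} measure $\sigma$ with $\supp\sigma\subseteq\supp\mu$, which requires revisiting the local structure near one-point-thick portions of $\supp\mu$ (as in Remark \ref{rmk:examplesandsupp} and the Hessian-nullspace analysis in the proof of Lemma \ref{lem:variationpoint}) and checking that the only surviving ``defect'' direction is the zeroth-order one, carrying a sign constrained to be nonnegative by $(\mathrm{Pos}^+)$. Once that localization is in hand, every remaining step is a verbatim or near-verbatim repetition of the machinery already developed for Theorem \ref{thm:generalfamilies}.
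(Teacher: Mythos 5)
Your overall strategy matches the paper's: decompose $\eta=\eta^0+\kappa$ with $\eta^0$ satisfying \diffcondition\ and $\kappa$ a positive Borel measure, handle $\eta^0$ via Theorem~\ref{thm:generalfamilies}, and add the term $s\kappa$ for $s\geq 0$. Two points need correction, one minor and one substantive.

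First, a minor point: you insist that $\sigma$ (your name for $\kappa$) be supported on $\supp\mu$, and you locate the ``defect'' at isolated one-point-thick portions of $\supp\mu$ as atoms. Neither is needed nor generally true. The positive measure $\kappa$ can have arbitrary support (the paper explicitly says so), and it need not be atomic; positivity of $\mu_s=\tilde\mu_s+s\kappa$ for $s\geq 0$ holds automatically from $s\kappa\geq 0$, regardless of where $\kappa$ lives. A simple example: $P=\R$, $\mu=\delta_0$, $\eta=\delta_1-\delta_0$. Here $(\mathrm{Pos}^+)$ holds ($\langle\eta,f\rangle=f(1)\geq 0$), and the decomposition is $\eta^0=-\delta_0$, $\kappa=\delta_1$ with $\supp\kappa\not\subseteq\supp\mu$.

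Second, the substantive error: your treatment of the probability case is wrong. You argue that $\langle\eta,1\rangle=0$ forces $\langle\sigma,1\rangle=0$, hence $\sigma=0$, hence $\eta$ already satisfies \diffcondition. That reasoning is circular (you are stipulating $\langle\sigma,1\rangle=0$ rather than deriving it), and the conclusion is false. The example above is a counterexample: $\mu=\delta_0$ is a probability, $\eta=\delta_1-\delta_0$ has $\langle\eta,1\rangle=0$ and satisfies $(\mathrm{Pos}^+)$ but not \diffcondition\ (take $f\geq 0$ with $f(0)=0$ and $f(1)>0$). Yet the conclusion of the theorem certainly holds here, via $\mu_s=(1-s)\delta_0+s\delta_1$. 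The correct handling of the probability clause is: with $\eta=\eta^0+\kappa$ one has $\langle\eta^0,1\rangle=-\langle\kappa,1\rangle$, so the two pieces need not individually preserve mass; but the combined family $\mu_s=\tilde\mu_s+s\kappa$ has $\frac{d}{ds}\big|_{s=0^+}\mu_s(P)=\langle\eta^0,1\rangle+\langle\kappa,1\rangle=\langle\eta,1\rangle=0$, so the total mass is $1+o(s)$ and one can normalize $\mu_s$ by its mass without changing the one-sided derivative at $0$. That is the step you skipped.
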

\begin{proof}[Sketch of proof]
Any distribution $\eta$ that satisfies Condition $(\mathrm{Pos}^+)$ can be written as 
\[\eta=\eta^0+\kappa,\]
where $\eta^0$ satisfies \diffcondition\ and $\kappa$ is a positive Borel measure on $P$ (with no restrictions on $\supp\kappa$). A family $(\mu_s)_s$ can thus be produced using the techniques used to prove Theorem \ref{thm:generalfamilies}.
\end{proof}

\subsection{Flows}
\label{sec:flows}

In this section we aim to give a rough scheme of how one can find an object closely related to the Colombeau algebra that gives a sense of ``direction of the movement'' for many distributions, in direct connection with mass transport theory.

For simplicity, let $P=\R^n$, $n\geq 1$. When $\mu_t$ is a family of densities that defines an absolutely continuous curve in Wasserstein space, it has been shown (see for example \cite[Chapter 8]{ambrosiogiglisavare}) that the derivatives of $\mu$ can be interpreted as the divergence of a vector field, that is, there are vector fields $v_t$ on $\R^n$ satisfying the continuity equation,
\begin{equation}\label{eq:continuity}
\frac{d\mu_t}{dt}+\divergence(\mu_tv_t)=0, \quad t\in\R.
\end{equation}
The interpretation is that the mass of $\mu_t$ is being transported by the flow of the vector field $v_t$.
This gives a way to assign a vector field $v_t$ to the  distribution $d\mu_t/dt$, and this vector field gives a notion of ``direction of the movement.'' The vector field $v_t$ is not unique; it is ambiguous by a vector field $u_t$ such that $\divergence(\mu_tu_t)=0$ for all $t\in\R$. Since for norm-induced topologies on the space of vector fields the set of possibilities is closed, one can choose a norm and choose the $v_t$ to be the minimizer for each $t$. One can show that the minimizer is in fact a gradient vector field, $v_t=\nabla \phi_t$ for some functions $\phi_t\colon\R^n\to\R$.

On the other hand, the results of Section \ref{sec:general_deformations} indicate that in the case of more general curves that are not absolutely continuous with respect to the Wasserstein metric (but are differentiable in the sense considered in Section \ref{sec:general_deformations}), much more general distributions can arise as the derivative. Thus, we can see an arbitrary distribution $\eta$ as the derivative of a family of measures at some point, say, $d\mu_t/dt|_{t=0}=\eta$, and we can use the continuity equation \eqref{eq:continuity} to try to assign an object that will give an idea of direction of the movement determined by $\eta$.

This can be done using Colombeau algebras. These algebras were developed \cite{colombeau} to provide a context in which distributions can be multiplied. All distributions are contained in these algebras. Roughly speaking, the solution to the multiplication problem is to record, instead of the distribution itself, all possible smoothings of the distribution. An equivalence relation is then proposed on a certain set of families of smooth functions, and its equivalence classes are the elements of the algebra. 

To define the relevant Colombeau algebra, we follow \cite[Section 8.5]{colombeaubook}.
Let $\mathcal E(P)$ be the set of families $(f_\varepsilon)_{0<\varepsilon<1}$ of functions $f_\varepsilon\in C^\infty(P)$ indexed by $0<\varepsilon<1$, such that for each compact set $K\subset P$ and every multi-index $I$ there are $N\in\N$, $\eta>0$, and $c>0$ such that 
\[\sup_{x\in K}\left|\partial^If_\varepsilon(x)\right|\leq \frac{c}{\varepsilon^N}\quad \textrm{if $0<\varepsilon<\eta$}.\]
We define the ideal $\mathcal N(P)$ of $\mathcal E(P)$ to be the set of families $(f_\varepsilon)_{0<\varepsilon<1}$ such that for all compact sets $K$, for all multi-indices $I$, and for all $q\in\N$ there exist $c>0$ and $\eta>0$ such that
\[\sup_{x\in K}\left|\partial^If_\varepsilon(x)\right|\leq c\varepsilon^q\quad\textrm{if $0<\varepsilon<\eta$}.\]
This means that the objects in $\mathcal N(P)$ have a fast decay (faster than any power of $\varepsilon$) when $\varepsilon\to0$. The \emph{Colombeau algebra} is the quotient
\[\mathcal G(P)=\mathcal E(P)/\mathcal N(P).\]
All distributions $\eta$ are contained in $\mathcal G(P)$ because the families $(\psi*_{\varepsilon F}\eta)_\varepsilon$ of smoothings are contained there.
We denote by $[\eta]$ the set of elements of $\mathcal G(P)$ that would be associated to the distribution $\eta$; that is,  $(f_\varepsilon)_\varepsilon\in\mathcal G(P)$ belongs to $[\eta]$ if for all $\phi\in C^\infty_c(P)$
\[\lim_{\varepsilon \to 0+}\int \phi(x)\, f_\varepsilon(x)\,dx-\langle \eta,\phi\rangle=0.\]

Let $\mu$ be a Borel measure on $P$ and let $\eta$ be a distribution satisfying \diffcondition, so that the conclusions of Theorem \ref{thm:generalfamilies} hold. 
Let $\mathcal M(\mu)$ be the set families of smooth vector fields $(v_\varepsilon)_{0<\varepsilon<1}$, such that for all $(\mu^\varepsilon)_\varepsilon\in[\mu]$ satisfying $\int \mu^\varepsilon(x)\,dx=1$ and $\mu^\varepsilon(x)\geq 0$ for $0<\varepsilon<1$, we have
\[(\divergence(\mu^\varepsilon v_\varepsilon))_\varepsilon\in\mathcal G(P).\]
The space $\mathcal{M}(\mu)$ is clearly a vector space, and it always contains a solution to the system of equations
\[\eta^\varepsilon+\divergence(\mu^\varepsilon v_\varepsilon)=0\]
where $(\eta^\varepsilon)_\varepsilon\in[\eta]$ is such that for all $0<\varepsilon<1$ we have $\supp \eta^\varepsilon\subseteq\supp\mu^\varepsilon$ (such $(\eta^\varepsilon)_\varepsilon$ always exists because $\eta$ satisfies \diffcondition) because this is just the classical case of equation \eqref{eq:continuity}. 

The (non-unique) family $(v_\varepsilon)_\varepsilon\in \mathcal M(\mu)$ is the object we have been pursuing, as it gives precise meaning to the notion of ``direction of movement'' associated to $\eta$ with respect to $\mu$.
It would be interesting to know which parts of the theory of mass transport still hold in this context. In particular, it is not clear whether the hypothetical object ``$\lim_{\varepsilon\to0}v_\varepsilon$'' itself corresponds to a distribution on $TP$ in some cases.

\section{Preliminaries on holonomic measures}
\label{sec:preliminaries}
\subsection{Setting}
\label{sec:setting}
\paragraph{Phase space.}

Let $M$ be a compact, oriented $C^\infty$ manifold of dimension $d\geq 1$, without boundary $\partial M=\varnothing$. 
Denote by $TM$ its tangent bundle and, for $n\geq 1$, denote by $T^nM$ the direct sum bundle
\[T^nM=\underbrace{TM\oplus\cdots\oplus TM}_n\]
of $n$ copies of $TM$.
The dimension of $T^nM$ is $d(n+1)$. 
An element in $T^nM$ can be denoted $(x,v_1,v_2\dots,v_n)$, where $x$ is a point in $M$ and $v_1,v_2,\dots, v_n\in T_xM$ are vectors tangent to $x$.
When taking local coordinates, we will write
\[x=(x_1,x_2,\dots,x_d)\quad\textrm{and} \quad v_i=(v_{i1},v_{i2},\dots,v_{id}).\]
Sometimes for brevity we will write $(x,v)$ instead of $(x,v_1,v_2,\dots,v_n)$.

The projection $\pi:T^nM\to M$ is given by $\pi(x,v_1,\dots,v_n)=x$. We denote by $\Omega^n(M)$ the space of smooth differential $n$-forms on $M$. We will often consider these forms as smooth functions on $T^nM$.

Throughout, when referring to functions on these objects, we will use the term \emph{smooth} to mean $C^\infty$. We will denote by $C^\infty(X,Y)$ the space of all smooth functions $X\to Y$.  If $Y$ is the real line $\R$, we will sometimes omit it in our notation. We will denote by $C_c^\infty(X)$ the set of all real-valued, compactly-supported, smooth functions on the set $X$.

\paragraph{Riemannian structure.} 
We fix, once and for all, a Riemannian metric $g\in C^\infty(T^2M)$ on $M$ and its corresponding Levi-Civita connection $\nabla$. We denote the operation of covariant differentiation in the direction of a vector field $F$ by $\nabla_F$. 

We will denote $|v|=\sqrt{g(v,v)}$ for $v\in T_xM$, and we extend this norm to $T^n_xM$ by letting
\[|(v_1,v_2,\dots,v_n)|=\sqrt{|v_1|^2+|v_2|^2+\cdots+|v_n|^2}.\]
% For points $x$ and $y$ on $M$, we let the distance on $M$ between them be defined by
% \[\dist_M(x,y)=\inf_\gamma\int_a^b|\gamma'(t)|\,dt,\]
% where the infimum is taken over all $a<b$ and all absolutely continuous curves $\gamma:[a,b]\to M$ such that $\gamma(a)=x$ and $\gamma(b)=y$. For points $p=(x,u_1,u_2,\dots,u_n)$ and $q=(y,v_1,v_2,\dots,v_n)$ in $T^nM$, we let
% \[\dist_{T^nM}(p,q)=\sqrt{\dist_M(x,y)^2+\liminf_\gamma\sum_{i=1}^n|\Gamma_\gamma(u_i)-v_i|^2},
% \]
% where $\Gamma_\gamma(v_i)$ denotes the parallel transport along the curve $\gamma$ and the $\liminf$ is taken over all curves $\gamma$ as above, as their length tends to $\dist_M(x,y)$. 

\paragraph{Forms.}
We will denote by $\Omega^k(M)$ the space of smooth differential $k$-forms on $M$. On this space we define a norm $\|\cdot\|$ by letting, for $\omega\in \Omega^k(M)$,
\begin{align*}
\|\omega\|&=\sup\{\omega_x(v_1,\dots,v_k):(x,v_1,\dots,v_k)\in T^nM,|v_i|\leq 1\} \\
&=\left|\det(g(v_i,v_j)_{i,j=1}^n\right|
\end{align*}

\subsection{Definition of holonomic measures and their topology}
\label{sec:measures}
% \paragraph{Generating tuples.}
% A tuple of vector fields $F=(F_1,F_2,\dots,F_\ell)$, $F_i\in C^\infty(M,TM)$, is \emph{generating} if for each $x\in M$ the vectors $F_1(x)$, $F_2(x)$,\dots, $F_\ell(x)$ generate the tangent space $T_xM$ as a vector space. Note that $F_1(x)$, $F_2(x)$,\dots, $F_\ell(x)$ may not be a basis of $T_xM$ because $\ell$ may be greater than $d$.

For $x\in M$, define the volume function $\vol_n:T^n_xM\to \R$ by
\[\vol_n(v_1,\dots,v_n)=\sup_{\omega\in \Omega^n(M),\|\omega\|\leq1}\omega_x(v_1,\dots,v_n).\]

We let $\Pn$ be the space of \emph{subvolume functions}, that is, the space of real-valued continuous functions $f\in C^0(T^nM)$ such that
\[\sup_{(x,v_1,\dots,v_n)\in T^nM}\frac{|f(x,v_1,\dots,v_n)|}{1+\vol_n(v_1,\dots,v_n)}<+\infty.\]
Note that all differential $n$-forms on $M$ belong to $\Pn$ when regarded as functions on $T^nM$. We endow $\Pn$ with the supremum norm and its induced topology.

\paragraph{Mild measures.}
%A signed Borel measure $\mu$ on $T^nM$ is \emph{mild} if 
%\[\int_{T^nM}1+\vol_n(v_1,\dots,v_n)\,d|\mu|<+\infty,\]
%where $|\mu|=\mu^++\mu^-$ is the absolute value of the measure with Hahn decomposition $\mu=\mu^+-\mu^-$, for positive measures $\mu^+$ and $\mu^-$.
%We denote the space of mild measures by $\Mn$, and the subset of $\Mn$ of positive measures by $\Mnpos$.
We define the \emph{mass} $\mass(\mu)$ of $\mu\in \Mn$ to be
%\begin{multline*}
\[ \mass(\mu)%=\int_{T^nM}\left[\sup_{\omega\in \Omega^n(M),\|\omega\|\leq1}\omega_x(v_1,\dots,v_n)\right]d|\mu|(x,v_1,\dots,v_n)\\
 =\int_{T^nM}\vol_n(v_1,v_2,\dots,v_n)\,d\mu(x,v_1,\dots,v_n).
\]
%\end{multline*}
This is always a nonnegative number.
A positive Borel measure $\mu$ on $T^nM$ is \emph{mild} if $\mass(\mu)<+\infty$. Denote by $\Mn$ the space of mild measures.

The space $\Mn$ is natually embedded in the dual space $\Pn^*$ and we endow it with the topology induced by the weak* topology on $\Pn^*$. Although the topology on $\Pn^*$ is not metrizable, the topology on $\Mn$ is. 
% Levy-Prokhorov metric?
We can give a metric in $\Mn$ by picking a sequence of functions $\{f_i\}_{i\in\N}\subset\Cinfc$ that are dense in $\Pn$, and then letting
\begin{equation}\label{eq:mildmetric}
\dist_\Mn(\mu_1,\mu_2)=|\mass(\mu_1)-\mass(\mu_2)|+\sum_{k=1}^\infty\frac1{2^k\sup|f_k|}\left|\int|f_k|d\mu_1-\int|f_k|d\mu_2\right|.
\end{equation}

\paragraph{Holonomic measures.}
A mild measure $\mu\in \Mn$ is \emph{holonomic} if it is a probability (that is, a positive measure such that $\mu(T^nM)=1$), and if for every differential $(n-1)$-form $\omega\in\Omega^{n-1}(M)$,
\begin{equation}\label{eq:holonomicitycondition}
\int_{T^nM}d\omega\,d\mu=0.
\end{equation}
The space $\holonomic$ of holonomic measures is convex. 

The motivation for this definition is given by Proposition \ref{prop:holonomic} below. 
\begin{rmk}
A mild measure $\mu$ induces a current $T\colon \Omega^n(M)\to\R$ given by
\[\langle T,\omega\rangle=\int \omega\,d\mu.\]
Motivated by Stokes's theorem, the boundary $\partial T$ of this current is defined by duality as
\[\langle \partial T,\omega\rangle=\langle T,d\omega\rangle.\]
The definition of holonomic measures equivalent to requiring the boundary $\partial T$ of the induced current to be empty.
\end{rmk}

\paragraph{Cellular complexes.}
An \emph{$n$-dimensional cell} (or \emph{$n$-cell}) $\gamma$ is a smooth map
\[\gamma:D\subseteq\R^n\to M,\]
where $D$ is a subset of $\R^n$ homeomorphic to a closed ball, \emph{together} with a choice of coordinates $t=(t_1,t_2,\dots,t_n)$ on $D$. A \emph{chain} of $n$-cells is a formal linear combination of the form 
\[a_1\gamma_1+a_2\gamma_2+\cdots+a_k\gamma_k\]
for real numbers $a_1,a_2,\dots,a_k$ and $n$-cells $\gamma_1,\gamma_2,\dots,\gamma_k$. We will say that a chain is \emph{positive} if $a_i>0$. 

Let $\gamma:D\subseteq\R^n\to M$ be an $n$-cell. Denote by $d\gamma$ the differential map associating, to each element in $D$, an element in $T^nM$. Explicitly, if we have coordinates $t=(t_1,t_2,\dots,t_n)$ on $D$, then
\[d\gamma(t)=\left(\gamma(t),\frac{\partial\gamma}{\partial t_1}(t),\frac{\partial\gamma}{\partial t_2}(t),\cdots,\frac{\partial\gamma}{\partial t_n}(t)\right).\]
This map \emph{does} depend on our choice of coordinates $t$.

To an $n$-cell $\gamma$, we associate a measure $\mu_\gamma$ on $T^nM$ defined by 
\[\int_{T^nM}f\,d\mu_\gamma=\int_Df(d\gamma(t))\,dt,\]
where $dt=dt_1\wedge\cdots\wedge dt_n$. In other words, $\mu_\gamma$ is the pushforward of Lebesgue measure on $D$, $\mu_\gamma=d\gamma_*dt$.

Similarly, to a chain of $n$-cells $\alpha=\sum_{i=1}^k a_i\gamma_i$, we associate the measure $\mu_\alpha$ given by 
\[\mu_\alpha=\sum_{i=1}^ka_i\mu_{\gamma_i}.\]
The measure $\mu_\alpha$ is an element of $\Mn$. We will say that the chain $\alpha$ is a \emph{cycle} if for all forms $\omega\in \Omega^{n-1}(M)$,
\[\int_{T^nM}d\omega\,d\mu_\alpha=0.\]
That is to say, that the chain $\alpha$ is a cycle if $\mu_\alpha$ is holonomic.

\begin{prop}\label{prop:holonomic}
 Assume that $1\leq n\leq d$. Let $\mu\in\Mn$ be a probability measure on $T^nM$. Then the following conditions are equivalent:
 \begin{enumerate}
  \item The measure $\mu$ is holonomic.
  \item There exists a sequence $\{\alpha_k\}_{k\in\N}$ of cycles such that the induced measures $\mu_{\alpha_k}$ are all probabilites on $T^nM$, and $\mu_{\alpha_k}\to\mu$ as $k\to\infty$ in the topology induced by the distance \eqref{eq:mildmetric}.\qed
 \end{enumerate}
\end{prop}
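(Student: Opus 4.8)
The plan is to prove the two implications separately, with essentially all the work in $(1)\Rightarrow(2)$. The implication $(2)\Rightarrow(1)$ is soft: convergence in the metric \eqref{eq:mildmetric} gives $\int f\,d\mu_{\alpha_k}\to\int f\,d\mu$ for every $f\in\Pn$ (the mass term together with the countable dense family controls all of $\Pn$, since $\vol_n\in\Pn$), and both the constant function $1$ and every exact form $d\omega$ with $\omega\in\Omega^{n-1}(M)$ belong to $\Pn$. Hence $\mu(T^nM)=\lim_k\mu_{\alpha_k}(T^nM)=1$ and $\int_{T^nM}d\omega\,d\mu=\lim_k\int_{T^nM}d\omega\,d\mu_{\alpha_k}=0$, so $\mu$ is holonomic.

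For $(1)\Rightarrow(2)$ I would fix a holonomic $\mu$ and $\varepsilon>0$ and build a single normalized positive cycle $\alpha$ with $\dist_\Mn(\mu_\alpha,\mu)<\varepsilon$; iterating produces the sequence. The construction has three stages. First, approximate $\mu$ weakly by a finitely supported probability measure $\sum_i a_i\delta_{p_i}$, $p_i=(x_i,v_{i,1},\dots,v_{i,n})$, $a_i>0$ — possible because $\mass(\mu)=\int\vol_n\,d\mu<\infty$ prevents mass from escaping to infinity along the fibers. Second, turn this into a chain: since $1\le n\le d$, each $\delta_{p_i}$ can be traded, after rescaling, for the measure $\mu_{\gamma_i}$ of a tiny $n$-cell $\gamma_i$ through $x_i$ whose partial derivatives at a reference point are $v_{i,1},\dots,v_{i,n}$ (for instance $\gamma_i(t)=\exp_{x_i}(\sum_j t_j v_{i,j})$ on a box of side $\rho\ll1$), so that the chain $\beta=\sum_i\gamma_i$, suitably normalized to a probability, has $\mu_\beta$ as close to $\mu$ as desired. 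Third, repair $\beta$ into a cycle: it is not one, but its boundary is asymptotically negligible, because for every $\omega\in\Omega^{n-1}(M)$ we have $\int_{T^nM}d\omega\,d\mu_\beta=\sum_i\int_{\gamma_i}d\omega=\sum_i\int_{\partial\gamma_i}\omega\to\int_{T^nM}d\omega\,d\mu=0$ by holonomicity of $\mu$. One then adds a correcting $n$-chain $\zeta$ with $\partial\zeta=-\partial\beta$ and $\mass(\mu_\zeta)$ arbitrarily small, so that $\alpha=\beta+\zeta$ is a cycle with $\mu_\alpha$ within $\varepsilon$ of $\mu$ after a final normalization.

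\textbf{The main obstacle is the third stage}, and specifically the passage from a \emph{weakly} small $\partial\beta$ to a filling $\zeta$ of small \emph{mass}: an isoperimetric filling with controlled mass needs $\partial\beta$ to be small in mass, not merely weakly. My remedy would be to not choose the discretization freely but to build the cells from a fine triangulation (or grid) of $M$: each top cell is weighted by the $\mu$-mass of the slab above it and equipped with the discretized tangent-plane data of the disintegration of $\mu$ along the projection $\pi$, so that neighbouring cells share faces and in $\partial\beta$ the interior faces cancel up to the controlled oscillation of the weights, leaving only a thin boundary layer; its mass is then estimated, and the filling $\zeta$ produced, by a Federer--Fleming-style deformation argument on the triangulation, using compactness and smoothness of $M$. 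An equivalent way to package $(1)\Rightarrow(2)$ is a Hahn--Banach separation in $(\Pn^*,\mathrm{weak}^*)$, whose dual is $\Pn$: if $\mu$ lay outside the closed convex hull of the normalized positive cycle measures there would be some $L\in\Pn$ with $\int L\,d\mu<\inf_\alpha\int L\,d\mu_\alpha$, and running the three-stage construction for this particular $L$ yields the contradiction. This reformulation is convenient but does not sidestep the filling step; it only localizes where it is used.
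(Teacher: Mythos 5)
The paper does not prove Proposition~\ref{prop:holonomic}; the sentence following it reads ``This was proved in \cite{myclosedmeasuresareholonomic}.'' So there is no internal argument to compare yours against, and I can only evaluate your sketch on its own terms.

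Your $(2)\Rightarrow(1)$ argument is correct (modulo the obvious typo in \eqref{eq:mildmetric}, where the $|f_k|$'s should presumably be $f_k$'s): convergence in the mild metric gives both $\mass(\mu_{\alpha_k})\to\mass(\mu)$ and $\int f\,d\mu_{\alpha_k}\to\int f\,d\mu$ for the dense family $\{f_k\}$, and a standard $\varepsilon/3$ argument using the uniform mass bound extends this to all of $\Pn$; since $1$ and every $d\omega$ lie in $\Pn$, holonomicity of the limit follows.

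Your $(1)\Rightarrow(2)$ sketch correctly identifies the crux --- passing from a weakly small $\partial\beta$ to a \emph{small-mass} filling $\zeta$ --- but the remedy you offer does not close that gap. Two specific problems. First, a Federer--Fleming \emph{deformation} argument controls $\mass(\zeta)$ by $\mass(\partial\beta)$ (with a constant scaling with the mesh), so even after your triangulation-based construction makes $\mass(\partial\beta_k)$ \emph{bounded} (via face cancellation up to the oscillation of the weights), the deformation theorem only yields a bounded filling, not one whose mass tends to zero. What is actually needed is the implication: $\partial\beta_k\rightharpoonup 0$ as $(n{-}1)$-currents together with $\sup_k\mass(\partial\beta_k)<\infty$ forces flat-norm convergence $\mathbf F(\partial\beta_k)\to0$, which is the Federer--Fleming \emph{compactness} theorem for normal currents; only then do you get $\partial\beta_k = A_k+\partial B_k$ with $\mass(A_k)+\mass(B_k)\to 0$, and you still must fill the small cycle $A_k$ via the isoperimetric inequality and absorb the possibly negative coefficients of $B_k$ and the isoperimetric filler by reparameterizing cells so that $\mu_{\alpha_k}$ remains a positive probability measure. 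None of this is ``a deformation argument on the triangulation,'' and you do not say it. Second, the triangulation remedy implicitly assumes that above each cube of $M$ you install \emph{one} cell carrying ``the'' discretized tangent data of the disintegration $\mu_x$. But $\mu_x$ is in general spread out over $T^n_xM$, so one cell per base cube cannot approximate $\mu$ weakly; you need a family of cells over each cube, one per fiber atom, and then face cancellation must be arranged \emph{separately within each fiber-direction class} across neighbouring cubes, which requires a coherent discretization of the fiber (e.g.\ a local frame and a fixed fiber grid) --- an issue your sketch does not address. Until both points are handled, the construction does not go through; your closing Hahn--Banach remark is, as you yourself note, a repackaging and not a fix.
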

This was proved in {\cite{myclosedmeasuresareholonomic}}.
Thus the space of holonomic measures is precisely the closure of the space of measures $\mu_\alpha$ induced by cycles $\alpha$.

\section{The holonomic tangent space}
\label{sec:tangent}
 \subsection{Mild distributions}
 \label{sec:distributions}
 
% A \emph{distribution} on the open set $U\subseteq\R^m$ is a functional $\eta\colon C^\infty_c(U)\to\R$ such that for each compact set $K\subset U$ there are some constants $N>0$ and $C>0$ (depending only on $K$ and $\eta$) such that
% \[|\langle \eta,f\rangle|\leq C\sum_{|I|\leq N}\sup_{p\in U} |\partial^I f(p)| \]
% for all $f\in C^\infty_c(U)$ supported in $K$.
% Here, the sum is taken over all multiindices $I$ with $n$ nonnegative entries adding up to at most $N$, and $\partial^I$ denotes the iterated partial derivatives in the corresponding directions in $\R^n$.
% 
% Let $P$ be a $C^\infty$ manifold.
% For a chart $\varepsilon\colon U\to W$ from the open set $U\subset P$ to the open set $W\subset \R^n$, the \emph{pushforward} $\varepsilon_*\eta$ is defined by
% \[\langle \varepsilon_*\eta,f\rangle=\langle\eta,f\circ\varepsilon^{-1}\rangle\]
% for $f$ in $C^\infty_c(W)$.
% 
% A \emph{distribution} on the manifold $P$ is a functional $\eta\colon C^\infty_c(P)\to\R$ such that for each chart $\varepsilon$ as above, $\varepsilon_*\eta$ is a distribution on $W$.
% We will denote by $\distributionsP$ the space of distributions on $T^nM$. 
% The topology on $\distributionsP$ is induced by the seminorms 
% \[\eta\mapsto|\langle \eta , f \rangle|\]
% for $f\in \Cinfc$. 

%Now take the case in which $P=T^nM$. 
A \emph{partition of unity} in $T^nM$ is a set of nonnegative functions $\{\psi_i\}_i\subset \Cinfc$ such that for all $x\in T^nM$ 
\[\sum_i\psi_i(x)=1.\]
Recall that distributions on manifolds were discussed in Section \ref{sec:subdistributions}.
Given a distribution $\eta\in \distributions$, we want to make sense of its value at a form $\omega\in \Omega^n(M)$. We let
\[\langle\eta,\omega\rangle=\sum_i\langle\eta,\psi_i\omega\rangle,\]
We denote by $\Dn\subset\distributions$ the set of \emph{mild distributions}, namely, the set of those distributions for which the series in the right-hand-side converges absolutely for all $\omega\in \Omega^n(M)$. This is independent of our choice of partition of unity $\{\psi_i\}_i$. Also, the spaces of mild measures $\Mn$ and of holonomic measures $\holonomic$ are subsets of $\Dn$.

A family of measures $\mu_t \in \Mn$ is \emph{differentiable} at 0 if there is a distribution $\eta\in\Dn$ such that for all $f\in\Cinfc$ 
\[\left.\frac{d}{dt}\right|_{t=0}\int f\,d\mu_t=\langle\eta,f\rangle.\]
% \begin{lem}
%  If $\mu_t$ is a differentiable family of measures on $T^nM$ with derivative $\eta$ at 0, and $\mu_0$ is in $\Mn$, and if $\eta$ is in $\Dn$, then there is some $\varepsilon>0$ such that for $|t|<\varepsilon$ we have $\mu_t\in\Mn$.
% \end{lem}
% \begin{proof}
%  \todo{Prove this.}
% \end{proof}

\subsection{Two-sided variations of holonomic measures}
 \label{sec:variations}
 
\begin{thm}\label{thm:tangent}
 Let $\mu$ be a holonomic measure in $T^nM$ and let $\eta\in \Dn$ be a mild distribution on $T^nM$. Then there exists a family of holonomic measures $(\mu_t)_{t\in\R}\subset\Mn$ such that $\mu_0=\mu$ and
 \begin{equation}\label{eq:tangentvector}
\ddtzero\int f\,d\mu_t=\langle\eta,f\rangle
 \end{equation}
 for all $f\in \Cinfc$ if, and only if, the following conditions are satisfied:
\begin{enumerate}
\item[\textup{\diffcondition}]\label{it:conditionD} For all nonnegative $f\in \Cinfc$ that vanish on $\supp\mu$, $\langle\eta,f\rangle=0$.
\item[\textup{\holcondition}]\label{it:conditionH} For all differential forms $\omega \in \Omega^{n-1}(M)$, $\langle\eta,d\omega\rangle=0$.
\item[\textup{\probcondition}] $\langle\eta,1\rangle=0$.
\end{enumerate}
\end{thm}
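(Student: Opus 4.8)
The plan is to reduce the statement to Theorem \ref{thm:generalfamilies} applied with $P=T^nM$, handling the extra holonomicity constraint \holcondition\ by hand. First, the necessity of the three conditions. Condition \diffcondition\ is forced exactly as in the proof of Theorem \ref{thm:generalfamilies}: if $f\in\Cinfc$ is nonnegative and vanishes on $\supp\mu$, then $g(t)=\int f\,d\mu_t\geq 0$ with $g(0)=0$, so $g'(0)=\langle\eta,f\rangle=0$. Condition \probcondition\ follows by differentiating $\mu_t(T^nM)\equiv 1$ (after noting that mass $1$ is an admissible test against a cutoff of the constant $1$, using that the $\mu_t$ are probabilities and mild). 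Condition \holcondition\ follows by differentiating the holonomicity identity $\int d\omega\,d\mu_t=0$, valid for every $\omega\in\Omega^{n-1}(M)$, and using that $d\omega\in\Pn$ so the pairing makes sense against the mild distribution $\eta$.

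For sufficiency, suppose $\eta$ satisfies \diffcondition, \holcondition, \probcondition. By Theorem \ref{thm:generalfamilies}, applied to the positive (probability) measure $\mu$ and the distribution $\eta$, which satisfies \diffcondition\ and $\langle\eta,1\rangle=0$, there is a family $\nu_s$ of \emph{probability} measures on $T^nM$ with $\nu_0=\mu$ and $d\nu_s/ds|_{s=0}=\eta$. The problem is that the $\nu_s$ need not be holonomic: the induced currents may have nonzero boundary, even though infinitesimally the boundary does not move (that is what \holcondition\ encodes). The strategy is to correct $\nu_s$ by a higher-order term so that holonomicity is restored without disturbing the derivative at $0$. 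Concretely, I would invoke Proposition \ref{prop:holonomic} (and the explicit variations of \cite{myclosedmeasuresareholonomic} referenced after Proposition \ref{prop:holonomic}): the space of holonomic measures is the closure of the set of cycle-induced measures $\mu_\alpha$, and the constructions there provide, for a measure close to being holonomic, a nearby genuinely holonomic measure together with control on how far one has to move. One then replaces $\nu_s$ by $\mu_s=\Pi(\nu_s)$, where $\Pi$ is such a correction that is the identity on $\holonomic$ and moves any $\nu$ by an amount controlled by the size of the boundary $\partial T_\nu$ of its induced current. Since, by \holcondition, $\langle\eta,d\omega\rangle=0$ for all $\omega$, the boundary of the induced current of $\nu_s$ is $o(s)$ as $s\to0$ — more precisely, $\langle\partial T_{\nu_s},\omega\rangle = \int d\omega\, d\nu_s = s\langle\eta,d\omega\rangle + o(s) = o(s)$ — so the correction $\mu_s-\nu_s$ is $o(s)$ in the relevant (mild) topology, and hence $d\mu_s/ds|_{s=0}=d\nu_s/ds|_{s=0}=\eta$. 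Because the correction preserves being a probability and lands in $\holonomic$, the family $\mu_t$ (after the usual reparameterization of $s$ to extend from a neighborhood of $0$ to all of $\R$, as in Theorem \ref{thm:generalfamilies}) is the desired family of holonomic measures. One also has to check mildness is preserved along the family, which follows because the mass functional is continuous and the corrections are small, or one may build the correction to preserve mass outright.

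The main obstacle is the holonomic correction step: making precise that one can project an almost-holonomic probability measure onto $\holonomic$ with displacement controlled linearly by the size of its current-boundary, and doing so measurably/continuously in $s$. This is exactly the kind of quantitative refinement of Proposition \ref{prop:holonomic} that the variations of \cite{myclosedmeasuresareholonomic} are meant to supply; the careful point is that the "amount of boundary" $\sup_{\|\omega\|\leq 1}\langle\partial T_{\nu_s},\omega\rangle$ must be bounded by something going to $0$ faster than linearly in $s$, which is precisely the content of \holcondition, and that the correction must not introduce first-order motion of its own — this is arranged by the same even-function/$s^2$ trick used in Lemma \ref{lem:convolutionderivative}, applied to the correction rather than to a mollification. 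Everything else is bookkeeping: matching up the "probability" and "mild" bookkeeping with the already-proved Theorem \ref{thm:generalfamilies}, and the final reparameterization in $s$.
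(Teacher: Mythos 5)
Your reduction to Theorem \ref{thm:generalfamilies} and the necessity arguments match the paper's proof exactly. The divergence, and the gap, is in the sufficiency step: you outsource the holonomicity correction to a hypothetical ``projection'' $\Pi$ onto $\holonomic$ whose displacement is controlled linearly by the size of the current-boundary, citing Proposition \ref{prop:holonomic} and \cite{myclosedmeasuresareholonomic} for this. But Proposition \ref{prop:holonomic} only says $\holonomic$ is the closure of the set of cycle-induced measures; it gives no quantitative control on how far one must move an almost-holonomic probability to land in $\holonomic$. You yourself flag this as ``the main obstacle,'' and indeed it is the heart of the matter, not bookkeeping. Nothing you cite establishes such a controlled projection, and it is not at all clear one exists in the relevant (weak*) topology, where $\holonomic$ is a convex but rather thin set cut out by infinitely many linear constraints $\int d\omega\,d\mu=0$.

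The paper sidesteps any need for a projection by an explicit, constructive correction. Starting from the family $\theta_t$ produced by Theorem \ref{thm:generalfamilies}, one forms for each $t$ a companion positive measure $\nu_t$ by pulling $\theta_t$ back along a fiberwise orientation-reversing reflection $r_x$ on $T^n_xM$, so that $\int d\omega\,d\nu_t = -\int d\omega\,d\theta_t$ identically (not just to first order). One then dilates $\nu_t$ by the fiber scaling $\lambda_a(x,v_1,\dots,v_n)=(x,av_1,\dots,av_n)$ and renormalizes: $\nu_t^a=\lambda_a^*\nu_t/a^n$. The $n$-homogeneity of $d\omega$ in the fiber variables gives $\int d\omega\,d\nu_t^a=\int d\omega\,d\nu_t$ for every $a$, while the total measure $\nu_t^a(T^nM)=\nu_t(T^nM)/a^n\to0$ as $a\to\infty$. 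Choosing $a=b(t)\to\infty$ fast enough that $\nu_t^{b(t)}(T^nM)=o(t^2)$ and $d\nu_t^{b(t)}/dt|_{t=0}=0$, and setting $\mu_t=(\theta_t+\nu_t^{b(t)})/(1+\int d\nu_t^{b(t)})$, one gets a family of probability measures that are \emph{exactly} holonomic for every $t$, with the same derivative at $0$ as $\theta_t$. The point is that the correction is additive and exactly cancels the boundary for all $t$, rather than approximately projecting $\theta_t$; and the $o(t^2)$ control you wanted to extract from \holcondition\ is instead engineered by hand via the free parameter $a$. Your $s^2$/even-function remark points in the right spirit but is not a substitute for this construction. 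To complete your proof you would either need to reproduce something like the paper's reflection-plus-rescaling device, or prove the controlled projection result from scratch.
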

\begin{rmk}
 In other words, the tangent space to the space of holonomic measures at the point $\mu$ is characterized by Conditions \diffcondition, \holcondition, and \probcondition. In fact, we have the following easy
 \begin{lem}\label{lem:Poseasy}
 If $\langle\eta,f\rangle$ consists of an integral of $f\in \Cinfc$ and of an integral of first derivatives of $f$ (i.e., if $\eta$ extends to a functional on $C^1_c(T^nM)$), and if $\supp\eta\subseteq\supp\mu$, then $\eta$ satisfies condition \textup{\diffcondition}.\qed
 \end{lem}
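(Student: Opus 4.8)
\textbf{Proof proposal for Lemma \ref{lem:Poseasy}.}

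The plan is to exploit the $C^1$-continuity of $\eta$ together with the fact that a nonnegative smooth function vanishing on $\supp\mu$ is, along with its first derivative, quadratically small near $\supp\mu$. So first I would record the elementary observation that if $f\in\Cinfc$ satisfies $f\geq 0$ and $f\equiv 0$ on $\supp\mu$, then every point of $\supp\mu$ is a global minimum of $f$, hence $df$ vanishes on $\supp\mu$ as well. Covering the compact set $\supp f$ by finitely many coordinate charts and applying a first-order Taylor estimate to $f$ along segments joining a point to its nearest point of $\supp\mu$, one obtains a constant $L>0$ such that $|f(q)|\leq L\,\dist(q,\supp\mu)^2$ and $|df(q)|\leq L\,\dist(q,\supp\mu)$ for every $q$ in a fixed neighborhood of $\supp\mu$. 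Note that no regularity of $\supp\mu$ is needed for this, only smoothness of $f$.

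Next I would introduce cutoffs. For each $k\in\N$ choose $\rho_k\in\Cinfc$ with $\rho_k\equiv 1$ on the $\tfrac1k$-neighborhood of $\supp\mu$, $\rho_k\equiv 0$ outside the $\tfrac2k$-neighborhood, and $|d\rho_k|\leq Ck$ for a constant $C$ independent of $k$. Since $(1-\rho_k)f$ is supported at distance at least $\tfrac1k$ from $\supp\mu$ and $\supp\eta\subseteq\supp\mu$ by hypothesis, we get $\langle\eta,(1-\rho_k)f\rangle=0$, so $\langle\eta,f\rangle=\langle\eta,\rho_kf\rangle$ for every $k$.

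Finally I would estimate $\rho_kf$ in the $C^1$ norm: on $\supp\rho_k$ one has $|f|\leq L/k^2$ and $|d(\rho_kf)|\leq|d\rho_k|\,|f|+|\rho_k|\,|df|\leq Ck\cdot L/k^2+L\cdot\tfrac2k=O(1/k)$, so $\|\rho_kf\|_{C^1}\to 0$ as $k\to\infty$. Because $\eta$ extends to a continuous functional on $C^1_c(T^nM)$ — equivalently, $\langle\eta,\cdot\rangle$ is the sum of an integral of the function and of an integral of its first derivatives — there is a constant $C'$, depending only on the fixed compact set $\supp f$, with $|\langle\eta,g\rangle|\leq C'\|g\|_{C^1}$ for all $g\in\Cinfc$ supported in that set. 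Applying this to $g=\rho_kf$ and letting $k\to\infty$ yields $\langle\eta,f\rangle=0$, which is exactly Condition \diffcondition.

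I do not expect a genuine obstacle here, since the statement is flagged as easy; the one point that requires care is that the $O(k)$ blow-up of $|d\rho_k|$ must be beaten by the $O(1/k^2)$ decay of $f$ near $\supp\mu$. This is precisely why one needs the vanishing of $df$ (not just of $f$) on $\supp\mu$, and hence why the nonnegativity hypothesis is used; a secondary nuisance is merely that $\supp\mu$ may be an arbitrary closed set, so the quadratic estimate should be obtained by the nearest-point Taylor argument above rather than by assuming any structure on $\supp\mu$.
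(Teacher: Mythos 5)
The paper marks this lemma with \qed immediately after the statement, offering no proof and treating it as an evident remark, so there is no argument in the paper to compare yours against. Your proof is correct and is the natural way to fill in the details: the nonnegativity forces $df$ to vanish on $\supp\mu$ (every point there is a global minimum), the Taylor estimate then gives $|f|=O(\dist(\cdot,\supp\mu)^2)$ and $|df|=O(\dist(\cdot,\supp\mu))$ near $\supp\mu$, the localization hypothesis $\supp\eta\subseteq\supp\mu$ lets you replace $f$ by $\rho_k f$ exactly, and the $O(1/k^2)$ decay of $f$ beats the $O(k)$ blowup of $|d\rho_k|$ so that $\|\rho_k f\|_{C^1}\to 0$, after which $C^1$-continuity of $\eta$ on the fixed compact set $\supp f$ finishes the argument. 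One cosmetic point: for noncompact $\supp\mu$ the cutoffs $\rho_k$ are not themselves in $\Cinfc$, but since only the products $\rho_k f$ enter the argument and these are compactly supported in $\supp f$, this is harmless.
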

\end{rmk}

\begin{proof}[Proof of Theorem \ref{thm:tangent}]
 By Theorem \ref{thm:generalfamilies}, Condition \diffcondition\ is necessary. If $(\mu_t)_t$ exists, then we have
 \[0=%\left.\frac{d}{dt}\right|_{t=0}0=
 \left.\frac{d}{dt}\right|_{t=0}\int d\omega\,d\mu_t=\langle\eta,d\omega\rangle\]
 for all $\omega\in\Omega^{n-1}(M)$. Hence, Condition \holcondition\ is also necessary. Condition \probcondition\ is necessary because we want $\mu_t(T^nM)=1$ for all $t$.
 
 To prove that Conditions \diffcondition, \holcondition, and \probcondition\ are sufficient, assume that they are satisfied. Then by Theorem \ref{thm:generalfamilies} we have a family of probability measures $\theta_t$ for $t$ in some interval that contains 0, with $\theta_0=\mu$ and with \eqref{eq:tangentvector}.  Moreover, the proofs of Theorem \ref{thm:generalfamilies} and Lemma \ref{lem:variationpoint} show that $\theta_t$ can be assumed to be in $\Mn$ for all $t$. Now we need to modify $\theta_t$ so that it is also a family of holonomic measures.
 
 There exists a family of positive measures $\nu_t$ such that for all $\omega\in \Omega^{n-1}(M)$ and all $t$
 \[\int d\omega\,d\theta_t + \int d\omega\,d\nu_t=0.\]
 The measure $\nu_t$ can for example be obtained from $\theta_t$ as follows. For each $x\in M$, let $r_x:T^n_xM\to T^n_xM$ be some reflection such that the multivector $r_x(v)$ has the opposite orientation as the multivector $v\in T^nM$. These reflections can be chosen in a piecewise-continuous (and hence measurable) way with respect to the variable $x$. Then one can take the family of measures determined by $\nu_t|_{T_x^nM}=r_x^*(\theta_t|_{T_x^nM})$.
 
 We may assume that $\nu_t(T^nM\cap\{0\})=0$ because the part of $\nu_t$ on the zero section does not contribute to the integrals 
 \[\int d\omega\,d\nu_t,\quad \omega\in \Omega^{n-1}(M).\]
 
 %The measure $\theta_t+\nu_t$ is holonomic, but its derivative may not be $\eta$. To make sure it is, we modify $\nu_t$ as follows. 
 For $a>0$, let $\lambda_a:T^nM\to T^nM$ be the map given by
 \[\lambda_a(x,v_1,v_2,\dots,v_n)=(x,av_1,av_2,av_3,\dots,av_n).\]
 The measure $\nu_t^a=\lambda_a^*\nu_t/a^n$ satisfies
 \[\int d\omega\,d\nu_t^a=\frac1{a^n}\int d\omega(x,av_1,\dots,av_n)\,d\nu_t=\int d\omega\,d\nu_t\]
 for all $\omega\in\Omega^{n-1}(M)$. As $a\to\infty$, the mass $\int d\nu_t^a$ of $\nu^a_t$ tends to 0. It is hence possible to find a function $b:\R-\{0\}\to\R_+$ such that $\nu_t^{b(t)}$ is a family of measures with 
 \[\left.\frac{d\nu_t^{b(t)}}{dt}\right|_{t=0}=0\qquad\textrm{and}\qquad \lim_{t\to0}\frac{1}{t^2} \int d\nu_t^{b(t)}=0.\]
 
 We let 
 \[\mu_t=\frac{\theta_t+\nu_t^{b(t)}}{1+\int d\nu_t^{b(t)}}\] 
 for $t\neq 0$ and $\mu_0=\mu$. This is a family of measures as in the statement of the theorem.
\end{proof}

\subsection{One-sided variations of holonomic measures}
\label{sec:onesidedholonomic}
We state the analogue of Theorem \ref{thm:tangent} for one-sided derivatives, which follows from \ref{thm:onesidedfamilies} in a similar way as Theorem \ref{thm:tangent} follows from Theorem \ref{thm:generalfamilies}.

\begin{thm}\label{thm:onesidedhol}
 Let $\mu$ be a holonomic measure in $T^nM$ and let $\eta\in \Dn$ be a distribution on $T^nM$. Then there exists a family of holonomic measures $\mu_t\in\Mn$, $t\geq0$, such that $\mu_0=\mu$ and
 \begin{equation}
\left.\frac{d}{dt}\right|_{t=0+}\int f\,d\mu_t=\langle\eta,f\rangle
 \end{equation}
 for all $f\in \Cinfc$ if, and only if, the following conditions are satisfied:
\begin{enumerate}
\item[\textup{$(\textrm{Pos}^+)$}]\label{it:conditionDplus} For all nonnegative $f\in \Cinfc$ that vanish on $\supp\mu$, $\langle\eta,f\rangle\geq0$.
\item[\textup{\holcondition}]\label{it:conditionHplus} For all differential forms $\omega \in \Omega^{n-1}(M)$, $\langle\eta,d\omega\rangle=0$.
\item[\textup{\probcondition}] $\langle\eta,1\rangle=0$.
\end{enumerate}
\end{thm}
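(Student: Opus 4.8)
The plan is to transcribe the proof of Theorem~\ref{thm:tangent}, using Theorem~\ref{thm:onesidedfamilies} in place of Theorem~\ref{thm:generalfamilies} and replacing every derivative at $0$ by a right derivative at $0$.

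For necessity, I would argue exactly as in Theorem~\ref{thm:tangent}, read one-sidedly. Suppose a one-sided family $(\mu_t)_{t\ge 0}\subset\Mn$ of holonomic measures with $\mu_0=\mu$ and one-sided derivative $\eta$ exists. If $f\in\Cinfc$ is nonnegative and vanishes on $\supp\mu$, then $g(t)=\int f\,d\mu_t$ is nonnegative and satisfies $g(0)=0$, so $g'(0+)=\langle\eta,f\rangle\ge 0$, which is $(\mathrm{Pos}^+)$. Differentiating the identity $\int d\omega\,d\mu_t=0$ at $t=0+$ for each $\omega\in\Omega^{n-1}(M)$ gives \holcondition, and differentiating $\mu_t(T^nM)=1$ at $t=0+$ gives \probcondition.

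For sufficiency, assume $\eta$ satisfies $(\mathrm{Pos}^+)$, \holcondition, and \probcondition. First I would invoke Theorem~\ref{thm:onesidedfamilies} on the pair $(\mu,\eta)$ to obtain a one-sided family $(\theta_t)_{t\ge 0}$ of probability measures with $\theta_0=\mu$ and right derivative $\eta$, noting --- as in the two-sided case --- that the construction behind that theorem (which runs the decomposition $\eta=\eta^0+\kappa$, with $\eta^0$ satisfying \diffcondition\ and $\kappa$ a positive measure, through Lemma~\ref{lem:variationpoint} and the convolution lemmas) can be arranged so that $\theta_t\in\Mn$ for all $t$. It then remains to make the family holonomic, and here I would repeat the correction step of Theorem~\ref{thm:tangent} verbatim: form $\nu_t$ ($t\ge 0$) by the fibrewise orientation-reversing reflection $\nu_t|_{T_x^nM}=r_x^*(\theta_t|_{T_x^nM})$ so that $\int d\omega\,d\nu_t=-\int d\omega\,d\theta_t$ for all $\omega\in\Omega^{n-1}(M)$, discard the part carried by the zero section, rescale by $\lambda_a$ to get $\nu_t^a$ with the same form integrals but total mass tending to $0$ as $a\to\infty$, pick $b(t)$ with $\left.\frac{d}{dt}\right|_{t=0+}\nu_t^{b(t)}=0$ and $t^{-2}\int d\nu_t^{b(t)}\to 0$, and set $\mu_t=(\theta_t+\nu_t^{b(t)})/(1+\int d\nu_t^{b(t)})$ for $t>0$ and $\mu_0=\mu$. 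Every estimate in this step is a one-sided limit as $t\to 0+$, so nothing about it changes.

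The hard part is not any single new step but the bookkeeping that is already delicate in the two-sided proof: (i) confirming that Theorem~\ref{thm:onesidedfamilies} can be made to output a family inside $\Mn$ rather than merely a family of positive measures, which uses that $\eta\in\Dn$ is mild so that the measures $\kappa,\eta^0$ appearing in its proof are mild and all the constructions stay in $\Mn$; and (ii) checking that dividing by $1+\int d\nu_t^{b(t)}$ keeps each $\mu_t$ a probability measure while leaving the right derivative unchanged, which holds because $\int d\nu_t^{b(t)}=o(t)$. One small extra point for the probability clause: if $\kappa(T^nM)=c>0$ then $\langle\eta^0,1\rangle=-c\ne 0$, so the contribution of $\kappa$ should be inserted as $s(\kappa-c\mu)$ --- a positive measure plus a \diffcondition-distribution of zero total mass --- so that the normalization is automatically preserved.
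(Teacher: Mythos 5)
Your proposal is correct and follows exactly the route the paper intends: the paper states that Theorem~\ref{thm:onesidedhol} ``follows from Theorem~\ref{thm:onesidedfamilies} in a similar way as Theorem~\ref{thm:tangent} follows from Theorem~\ref{thm:generalfamilies},'' and you have simply carried out that transcription, replacing each two-sided derivative by a right derivative and re-using the orientation-reversal/rescaling correction unchanged (which indeed only involves limits as $t\to0^+$). The bookkeeping worry you raise about keeping the intermediate family of probabilities when $\kappa(T^nM)=c>0$ is really a remark on the proof sketch of Theorem~\ref{thm:onesidedfamilies} rather than on the present theorem, for which one may invoke Theorem~\ref{thm:onesidedfamilies} as a black box (its statement already delivers a family of probabilities under \probcondition); your fix of rebalancing the decomposition as $(\eta^0+c\mu)+(\kappa-c\mu)$ is a reasonable way to patch that sketch, though note that positivity of $\theta_s+s(\kappa-c\mu)$ still needs the final normalization to absorb the $O(s)$ defect.
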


Analogously to Lemma \ref{lem:Poseasy}, we have
\begin{lem}\label{lem:Pospluseasy}
 If the distribution $\eta$ extends to a functional on $C^1_c(\supp\mu) \cap C^0_c(T^nM)$ (i.e., if $\eta$ only involves an integral of the test function on $T^nM$ and an integral of its first derivatives on $\supp\mu$), then $\eta$ satisfies condition \textup{$(\textrm{Pos}^+)$}.
\end{lem}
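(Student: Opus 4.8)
The plan is to mimic the proof of Lemma~\ref{lem:Poseasy}, adjusting the bookkeeping to accommodate the extra positive-measure summand that appears in the one-sided setting --- exactly the term $\kappa$ in the proof of Theorem~\ref{thm:onesidedfamilies}. Fix a nonnegative $f\in\Cinfc$ that vanishes identically on $\supp\mu$; the goal is to show $\langle\eta,f\rangle\geq0$.

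First I would record an elementary pointwise fact: since $T^nM$ has no boundary and $f\geq0$ vanishes on $\supp\mu$, every point of $\supp\mu$ is a global minimum of the smooth function $f$, so not only $f$ but also $df$ vanishes identically along $\supp\mu$.

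Next I would unpack the hypothesis that $\eta$ extends to a functional on $C^1_c(\supp\mu)\cap C^0_c(T^nM)$: it means $\eta$ splits as $\eta=\kappa+\eta^0$, where $\kappa$ is a positive Borel measure on $T^nM$ (the zeroth-order part) and $\eta^0$ is supported on $\supp\mu$ and pairs with a test function only through an integral of its first derivatives along $\supp\mu$. Obtaining this splitting is the one step that needs care: one localizes as in Lemma~\ref{lem:decompositionmeasure} and uses the boundedness of $\eta$ with respect to the norm $\sup_{T^nM}|f|+\sup_{\supp\mu}|df|$ to see that the zeroth-order part is represented by an honest measure, while the sign information inherent in the one-sided situation forces that measure to be nonnegative. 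Granting the splitting, $\langle\eta^0,f\rangle=0$ because $\eta^0$ only sees $df|_{\supp\mu}\equiv0$, and $\langle\kappa,f\rangle=\int f\,d\kappa\geq0$ because $\kappa\geq0$ and $f\geq0$; adding the two contributions yields $\langle\eta,f\rangle\geq0$, which is Condition~$(\textrm{Pos}^+)$.

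The main obstacle is precisely that middle step. In Lemma~\ref{lem:Poseasy} the analogous decomposition is trivial because the extra hypothesis $\supp\eta\subseteq\supp\mu$ kills the zeroth-order contribution outright; here we must instead keep track of where each piece of $\eta$ lives --- the first-order part concentrated on $\supp\mu$, so that it annihilates $f$, and the measure part allowed anywhere on $T^nM$ but required to be nonnegative --- and it is exactly this sign constraint that distinguishes the one-sided statement from the two-sided Lemma~\ref{lem:Poseasy}.
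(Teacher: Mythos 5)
Your overall strategy and the two elementary facts it rests on are correct: for a nonnegative $f\in\Cinfc$ vanishing on $\supp\mu$, every point of $\supp\mu$ is a global minimum of $f$ (here the absence of boundary matters, as you note), so $df|_{\supp\mu}\equiv0$ and the first-order part of $\eta$, which only sees $df$ restricted to $\supp\mu$, annihilates $f$; and a positive measure pairs nonnegatively with a nonnegative function. But the step you yourself flag as the ``main obstacle'' --- obtaining the splitting $\eta=\kappa+\eta^0$ with $\kappa$ a \emph{nonnegative} measure --- is asserted rather than argued, and the assertion does not follow from the hypothesis. The hypothesis is purely a boundedness statement: $\eta$ is continuous for the norm $\sup_{T^nM}|f|+\sup_{\supp\mu}|df|$. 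A functional bounded by $\sup|f|$ on $C^0_c(T^nM)$ is represented by a \emph{signed} measure, and nothing in the continuity estimate constrains its sign. The phrase ``sign information inherent in the one-sided situation'' is circular: $(\textrm{Pos}^+)$ is exactly what the lemma is supposed to yield, so it cannot be the source of the sign of $\kappa$.

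Moreover, the positivity of $\kappa$ cannot be squeezed out of the hypothesis as literally written, because the lemma's conclusion fails without it. Take any $p\in T^nM\setminus\supp\mu$ and set $\eta=-\delta_p$. This $\eta$ trivially extends to a functional on $C^1_c(\supp\mu)\cap C^0_c(T^nM)$ (it only uses the value $f(p)$), yet for a nonnegative bump function $f$ with $f(p)>0$ and $f\equiv0$ on $\supp\mu$ one gets $\langle\eta,f\rangle=-f(p)<0$, violating $(\textrm{Pos}^+)$. So either the hypothesis must be read so that ``an integral of the test function on $T^nM$'' means integration against a \emph{positive} measure (paralleling the positive $\kappa$ in the proof sketch of Theorem~\ref{thm:onesidedfamilies}), or that positivity must be added as an explicit assumption. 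Under that stipulation your argument is complete; without it the middle step is a gap, and indeed the lemma as stated fails.
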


\section{Examples}
\label{sec:examples}
\subsection{Criticality}
Results in this section are valid for measures that are critical with respect to the action of a general smooth Lagrangian $L\in C^\infty(T^nM)$. Unless explicitly stated, we do not require, for example, that $L$ be convex. 

A \emph{variation} of a holonomic measure $\mu\in\holonomic$ is a family $\mu_t$ of holonomic measures that is defined for $t\geq0$, is differentiable at 0, and satisfies $\mu_0=\mu$.

We denote by $A_L$ the action of the Lagrangian $L$,
\[A_L(\mu)=\int_{T^nM} L\,d\mu.\]
We say that $\mu\in\holonomic$ is \emph{critical} for $A_L$ if for every variation $\mu_t$ with $\mu_0=\mu$ the one-sided derivative satisfies
\begin{equation}\label{eq:criticality}
 \left.\frac{d}{dt}\right|_{t=0^+}A_L(\mu_t)\geq0.
\end{equation}
By Theorem \ref{thm:onesidedhol}, $\mu$ is critical if, and only if, for all distributions $\eta\in\Dn$ that satisfies Conditions $\mathrm{(Pos^+)}$, \holcondition, and \probcondition, we have 
\begin{enumerate}
\item[\critcondition]$\langle\eta,L\rangle\geq0$.
\end{enumerate}
\begin{rmk}
Note that if $\eta$ satisfies not only $\mathrm{(Pos^+)}$, \holcondition, and \probcondition, but also \diffcondition (i.e., if it appears as a two-sided derivative), then \critcondition\ is equivalent to $\langle\eta,L\rangle=0$.
\end{rmk}

\paragraph{Homology.}
A holonomic measure $\mu\in\holonomic$ is assigned its \emph{homology class} $\rho(\mu)\in H_n(M;\R)$ by requiring
\[\langle\rho(\mu),\omega\rangle=\int\omega\,d\mu\]
for all closed forms $\omega\in\Omega^n(M)$, $d\omega=0$.
If for each $t$ the measure $\mu_t$ has the same associated homology class as $\mu_0$, $\rho(\mu_t)=\rho(\mu_0)$, then we say that the variation $\mu_t$ is \emph{homology preserving}. 
Clearly, for this to happen the following condition is necessary on the one-sided derivative $\eta=d\mu_t/dt|_{t=0^+}$:
\begin{enumerate}
\item[\homcondition] $\langle\eta,\omega\rangle=0$ for all $\omega\in \Omega^n(M)$ with $d\omega=0$.
\end{enumerate}

\begin{conj}
 Conditions $\mathrm{(Pos^+)}$, \holcondition, \probcondition, and \homcondition\ are sufficient for the existence of a homology preserving variation $\mu_t$.\qed
\end{conj}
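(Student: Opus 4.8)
The plan is to build the variation in two stages: first use Theorem~\ref{thm:onesidedhol} to produce a one-sided variation realizing $\eta$ while ignoring the homology class, and then absorb the (small) homology drift by mixing in a fixed finite family of auxiliary holonomic measures, one pair for each basis direction of the finite-dimensional space $H_n(M;\R)$.

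Concretely, since $\eta$ satisfies $(\mathrm{Pos}^+)$, \holcondition\ and \probcondition, Theorem~\ref{thm:onesidedhol} gives a family $(\mu_t^{(1)})_{t\ge0}\subset\Mn$ of holonomic measures with $\mu_0^{(1)}=\mu$ and one-sided derivative $\eta$. Put $h_t:=\rho(\mu_t^{(1)})-\rho(\mu)\in H_n(M;\R)$; then $h_0=0$ and, I claim, $h_t=o(t)$ as $t\to0^+$. Indeed, pairing with a closed $\omega\in\Omega^n(M)$ gives $\langle h_t,[\omega]\rangle=\int\omega\,d\mu_t^{(1)}-\int\omega\,d\mu$, which is $0$ at $t=0$ and has one-sided $t$-derivative $\langle\eta,\omega\rangle$ at $0$, and the latter vanishes by Condition~\homcondition\ (Condition~\holcondition\ being the case of exact $\omega$); since $H_n(M;\R)$ is finite-dimensional it suffices to test against a basis of $H^n_{\mathrm{dR}}(M)$, so $h_t=o(t)$.

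Next I would fix a basis $e_1,\dots,e_k$ of $H_n(M;\R)$. Every real homology class is the class of some holonomic measure (represent it by a cycle of cells via Proposition~\ref{prop:holonomic} and rescale the parametrizing domains so that the induced measure is a probability), so we may choose holonomic measures $\kappa_j^{\pm}$ with $\rho(\kappa_j^{\pm})=\rho(\mu)\pm e_j$. Writing $h_t=\sum_j h_{j,t}e_j$ with $h_{j,t}=o(t)$, solve the near-trivial system for nonnegative $a_{j,t},b_{j,t}=o(t)$ (for each $j$, let only one of $a_{j,t},b_{j,t}$ be nonzero, according to the sign of $h_{j,t}$) so that, with $\sigma_t:=\sum_j(a_{j,t}+b_{j,t})$,
\[
(1-\sigma_t)h_{j,t}+(a_{j,t}-b_{j,t})=0\qquad(j=1,\dots,k),
\]
and set, for $t$ near $0$,
\[
\mu_t:=(1-\sigma_t)\,\mu_t^{(1)}+\sum_j a_{j,t}\,\kappa_j^{+}+\sum_j b_{j,t}\,\kappa_j^{-}.
\]
This is a convex combination of holonomic measures (valid once $\sigma_t<1$, which holds for small $t$ since $\sigma_t=o(t)$), hence holonomic by convexity of $\holonomic$; by the choice of coefficients $\rho(\mu_t)\equiv\rho(\mu)$; and $\sigma_0=0$ gives $\mu_0=\mu$. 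Because $a_{j,t}=b_{j,t}=o(t)$, the auxiliary terms contribute nothing to the derivative at $0$, so $\left.\frac{d}{dt}\right|_{t=0^+}\int f\,d\mu_t=\langle\eta,f\rangle$ for all $f\in\Cinfc$; extending the family from a neighbourhood of $0$ to all $t\ge0$ is done exactly as at the end of the proof of Theorem~\ref{thm:tangent}.

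The step I expect to be the main obstacle is the claim $h_t=o(t)$, i.e.\ the assertion that $t\mapsto\int\omega\,d\mu_t^{(1)}$ is differentiable at $0^+$ with derivative $\langle\eta,\omega\rangle$ for closed $\omega\in\Omega^n(M)$. Theorem~\ref{thm:onesidedhol} only guarantees differentiability against $\Cinfc$, whereas $\omega$, regarded as a function on $T^nM$, has noncompact support and grows like $\vol_n$ in the fibres. The natural remedy is to write $\omega=\omega\chi_R+\omega(1-\chi_R)$ with $\chi_R$ a smooth fibrewise cutoff, use the given differentiability on $\omega\chi_R\in\Cinfc$, and bound the tail $\omega(1-\chi_R)$ using uniform integrability of $\vol_n$ along the family --- which itself follows from $\mu_t^{(1)}\to\mu$ in $\Mn$ together with the convergence $\mass(\mu_t^{(1)})\to\mass(\mu)$ built into \eqref{eq:mildmetric}; making the tail estimate uniform in $t$ near $0$ (so that it survives division by $t$) is the delicate part, and effectively amounts to strengthening the constructions behind Theorems~\ref{thm:generalfamilies}--\ref{thm:onesidedhol} so that the families they yield are differentiable against the whole of $\Pn$, not merely $\Cinfc$. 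This strengthening is not carried out in the paper, which is presumably why the statement is left as a conjecture.
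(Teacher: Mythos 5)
This is a conjecture in the paper, so there is no proof to compare against; I am assessing your argument on its own terms.

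Your two-stage strategy --- realise $\eta$ with a one-sided holonomic family via Theorem~\ref{thm:onesidedhol}, then absorb the homology drift $h_t=\rho(\mu_t^{(1)})-\rho(\mu)$ by convex-combining with a fixed finite family of auxiliary holonomic probability measures --- is natural and well organised. The correction step is sound once $h_t=o(t)$ is granted: with at most one of $a_{j,t},b_{j,t}$ nonzero for each $j$, the system $\sigma_t=\sum_j(a_{j,t}+b_{j,t})$, $(1-\sigma_t)h_{j,t}+a_{j,t}-b_{j,t}=0$ has the explicit solution $\sigma_t=S_t/(1+S_t)$ with $S_t=\sum_j|h_{j,t}|$, so all coefficients are $o(t)$, the resulting $\mu_t$ is a genuine convex combination of holonomic probabilities (hence holonomic by convexity of $\holonomic$), $\rho(\mu_t)\equiv\rho(\mu)$, $\mu_0=\mu$, and the one-sided derivative against $\Cinfc$ is $\eta$. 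Choosing $\kappa_j^\pm$ with $\rho(\kappa_j^\pm)=\rho(\mu)\pm e_j$ rather than $\pm e_j$ is exactly what makes the $\rho(\mu)$-terms cancel, and such $\kappa_j^\pm$ exist: represent the class by a real cycle, reverse orientations on cells carrying negative coefficients to obtain a positive cycle, and rescale to a probability.

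You have, however, put your finger on the genuine gap, and it is the whole difficulty: $h_t=o(t)$ requires that $t\mapsto\int\omega\,d\mu_t^{(1)}$ be one-sidedly differentiable at $0$ with derivative $\langle\eta,\omega\rangle$ for every \emph{closed} $\omega\in\Omega^n(M)$, whereas the differentiability delivered by Theorem~\ref{thm:onesidedhol} (and built into the paper's definition of a differentiable family in $\Mn$) is only tested against $\Cinfc$. Since $\omega$ grows like $\vol_n$ in the fibres, your cutoff reduction leaves a tail term $\frac1t\int\omega(1-\chi_R)\,d(\mu_t^{(1)}-\mu)$ that must vanish uniformly in $t$ near $0$ as $R\to\infty$; controlling it needs a quantitative, $t$-uniform tail estimate on the family produced in Lemma~\ref{lem:variationpoint} and Theorems~\ref{thm:generalfamilies}--\ref{thm:onesidedhol}, i.e.\ a strengthening of those constructions so that they yield families differentiable against all of $\Pn$. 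The paper does not provide this, and without it your argument does not close. (The same subtlety lurks in the paper's own necessity arguments for \holcondition\ and \homcondition, but there it is harmless, because $t\mapsto\int d\omega\,d\mu_t$ is identically zero and, in a homology-preserving family, $t\mapsto\int\omega\,d\mu_t$ is constant for closed $\omega$; in your sufficiency direction these functions are not constant, so the issue is real.)
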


We will say that $\mu\in\holonomic$ is \emph{critical for $A_L$ within its homology class} if equation \eqref{eq:criticality} holds for every homology preserving variation $\mu_t$ of $\mu$. In particular, if $\mu$ is critical for $A_L$, then it is also critical within its homology class.

\subsection{Horizontal variations}
\label{sec:horizontal}
Let $X\colon M\to TM$ be a smooth vector field on $M$. For $f\in\Cinfc$, denote by $Xf$ the Lie derivative in the (horizontal) direction $X$. This is given by $Xf=d_xf(X)$, and is independent of the Riemannian metric on $M$. For a differential form $\omega\in \Omega^n(M)$, the action of $X$ on $\omega$ is also defined, and it is equal to the Lie derivative $\mathcal L_X\omega=i_Xd\omega+di_X\omega$. Here, $i_X$ denotes the contraction.

Let $\mu$ be a holonomic measure on $T^nM$. The distribution $\eta$ given by
\begin{equation}\label{eq:horizontaldistribution}
 \langle\eta,f\rangle=\int_{T^nM} Xf\,d\mu
\end{equation}
for $f\in\Cinfc$ clearly satisfies Conditions \diffcondition\ and \probcondition. It also satisfies Condition \holcondition\ because for all $\omega\in\Omega^{n-1}(M)$,
\[\langle\eta,d\omega\rangle=\int \mathcal L_Xd\omega\,d\mu=\int i_Xd^2\omega+di_Xd\omega\,d\mu=0.\]
Therefore, $\eta$ is in the tangent space to $\mu$.

It also satisfies Condition \homcondition\ because, if $\omega$ is a closed $n$-form,
\[\frac{d}{ds}\int \omega\,d\mu_s=\int \mathcal L_X\omega\,d\mu_s=\int i_Xd\omega+di_X\omega\,d\mu_s=0.\]
 The last equality is true since $d\omega=0$ because $\omega$ is closed, and $\int di_X\omega\,d\mu_s=0$ because $\mu_s$ is holonomic.
 
In fact, it is easy to explicitly construct a family $\mu_t$ with derivative $\eta$ and $\mu_0=\mu$. To do this, take the flow $\phi_t:\R\times M\to M$ of $X$ on $M$, determined by
\[\phi_0(x)=x,\quad \frac{d}{dt}\phi_t(x)=X(x),\quad\textrm{for  $x\in M, t\in \R$.}\]
Extend this to an isotopy $r:\R\times T^nM\to T^nM$ by
\[r_t(x,v_1,\dots,v_n)=(\phi_t(x),d\phi_t(v_1),\dots,d\phi_t(v_n)),\]
where $d\phi_t:T_xM\to T_{\phi_t(x)}M$ denotes the derivative of $\phi_t$ at $x$. Then we can simply let $\mu_t=r_t^*\mu$. From this construction and Proposition \ref{prop:holonomic}, it is clear that $\mu_t$ is homology preserving. We thus have

\begin{prop}
 If $\mu$ is critical for $A_L$ within its homology class, then Condition \critcondition\ must hold for all distributions $\eta$ of the form given in equation \eqref{eq:horizontaldistribution}.\qed
\end{prop}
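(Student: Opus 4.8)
The plan is to observe that almost all of the work has already been carried out in the paragraph preceding the statement, so that the proposition reduces to feeding the explicit homology-preserving variation constructed there into the definition of criticality within a homology class. Fix a smooth vector field $X\colon M\to TM$ and let $\eta$ be the associated distribution of the form \eqref{eq:horizontaldistribution}. Recall the flow $\phi_t$ of $X$ on $M$, its lift $r_t\colon\R\times T^nM\to T^nM$, and the family $\mu_t=r_t^*\mu$; the discussion above shows that $(\mu_t)_{t\in\R}$ is a variation of $\mu$ with $\mu_0=\mu$, that it is homology preserving, and that $\ddtzero\int f\,d\mu_t=\langle\eta,f\rangle$ for every $f\in\Cinfc$ (the relevant Conditions \diffcondition, \holcondition, \probcondition, \homcondition\ having been checked there).

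Next I would restrict to $t\ge 0$, so that $(\mu_t)_{t\ge 0}$ is a homology-preserving variation in the precise sense demanded by the definition of ``critical for $A_L$ within its homology class.'' Since $\mu$ is critical within its homology class, this yields
\[\left.\frac{d}{dt}\right|_{t=0^+}A_L(\mu_t)\ge 0.\]
It then remains to identify this one-sided derivative with $\langle\eta,L\rangle$. Writing $A_L(\mu_t)=\int_{T^nM}L\circ r_t\,d\mu$, this is a differentiation under the integral sign; for compactly supported test functions it is exactly the identity $\ddtzero\int f\,d\mu_t=\langle\eta,f\rangle$ just recalled, and for the Lagrangian $L$ itself one extends it using the mildness of $\eta$ together with the (subvolume) growth hypotheses on $L$ --- equivalently, by producing a dominating function for $\tfrac{d}{dt}(L\circ r_t)$ near $t=0$, which exists because $r_t$ is an isotopy over the compact manifold $M$. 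With this identification in hand, $\langle\eta,L\rangle\ge 0$ is precisely Condition \critcondition, which is the assertion of the proposition.

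The only step that needs any real care is this last one, the passage from compactly supported test functions to the generally non-compactly-supported Lagrangian $L$ in the identity $\ddtzero A_L(\mu_t)=\langle\eta,L\rangle$; everything else is a direct appeal to facts already established (and, alternatively, one could bypass the explicit construction and instead invoke Theorem \ref{thm:onesidedhol} together with the fact, verified in the preceding discussion, that $\eta$ satisfies $\mathrm{(Pos^+)}$, \holcondition, \probcondition\ and \homcondition). I would also add the remark that the same argument applied to the reversed variation $t\mapsto\mu_{-t}=r_{-t}^*\mu$, which is again homology preserving, gives $\langle\eta,L\rangle\le 0$ as well; hence in fact $\langle\eta,L\rangle=0$ for every horizontal $\eta$, consistently with the earlier remark that a two-sided derivative turns Condition \critcondition\ into an equality.
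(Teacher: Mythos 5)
Your main argument is correct and matches the paper's: the proposition is an immediate consequence of the explicit construction $\mu_t=r_t^*\mu$ carried out in the preceding paragraph, which produces a homology-preserving variation with derivative $\eta$, and the definition of criticality within the homology class then yields Condition \critcondition. Your care about extending $\ddtzero\int f\,d\mu_t=\langle\eta,f\rangle$ from $f\in\Cinfc$ to the non-compactly-supported $L$ is a reasonable point to flag; the paper glosses over it (it is implicit in the mildness of $\eta$ and the integrability of $L$ against $\mu$ that is built into the definition of $A_L$).

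One aside is off, however. You suggest that one could bypass the explicit construction and instead invoke Theorem \ref{thm:onesidedhol} together with the verification that $\eta$ satisfies $(\mathrm{Pos}^+)$, \holcondition, \probcondition, and \homcondition. That route does not work: Theorem \ref{thm:onesidedhol} only produces a variation through holonomic measures with the prescribed one-sided derivative; it says nothing about that variation preserving homology. Condition \homcondition\ is merely a necessary condition on the derivative, and the claim that $(\mathrm{Pos}^+)$, \holcondition, \probcondition, \homcondition\ together suffice for a homology-preserving variation is stated in the paper as an open conjecture, not a theorem. Since criticality within the homology class only constrains homology-preserving variations, you really do need the explicit flow construction (or some other proof of homology preservation) here, and cannot reduce to Theorem \ref{thm:onesidedhol}. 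The two-sidedness remark at the end is fine and agrees with the paper's own remark following the definition of \critcondition.
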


\paragraph{Euler-Lagrange equations.}
Assume that the holonomic measure $\mu$ is induced by a piecewise smooth cycle $\alpha$, that is,
\[\mu=\mu_\alpha.\]
We will now recover the traditional Euler-Lagrange equations in this special case. 

For $t\in\R$, $r_t\circ \alpha$ denote the cycle that results from the operation of composing each of the $n$-cells $\gamma_{i}$ that appear in $\alpha$ with the isotopy $r$:
\[\textrm{if $\alpha=\sum_ic_{i}\gamma_{i}$, $c_i\in\R$, then $r_t\circ\alpha=\sum_ic_{i}\,r_t\circ \gamma_{i}$.}\] 
The variation $\mu_t=r_t^*\mu_\alpha$ constructed above is precisely the same as $\mu_{r_t\circ\alpha}$.

We want to examine what happens when the measure $\mu$ is critical for $A_L$ with respect to all such variations $\mu_t$ for all vector fields $X$. 
%For clarity, we use the time variable $s$ instead of $t$, and we use the variables $t_j$ on the domain of $\gamma_i$. 
We will assume that the support of the derivative of the variation $d\mu_s/ds|_{s=0}$ is contained within a chart, and we will work in local coordinates.
We will write $dt=dt_1\cdots dt_n$. We denote the partial derivatives of $L$ by $L_x$ and $L_{v_i}$.
For each such variation we have:
\begin{multline*}
0=\left.\frac{d}{ds}\right|_{s=0}\int L\,d\mu_s=
\left.\frac{d}{ds}\right|_{s=0}\sum_ic_i\int L(d(r_s\circ\gamma_i)) \,dt\\
=\sum_ic_i\int \left[L_x(d\gamma_i) \left.\frac{\partial r_s\circ \gamma_i}{\partial s}\right|_{s=0}+\sum_jL_{v_j}(d\gamma_i)\left.\frac{\partial^2( r_s\circ\gamma_i)}{\partial s\,\partial t_j}\right|_{s=0}\right]dt\\
=\sum_i c_i\int\left[L_x(d\gamma_i)-\sum_j \frac{\partial L_{v_j}(d\gamma_i)}{\partial t_j}\right]\left.\frac{\partial (r_s\circ\gamma_i)}{\partial s}
\right|_{s=0}dt\\=\sum_ic_i\int\textrm{(E-L)}\left.\frac{\partial r_s}{\partial s}\right|_{s=0}dt
\end{multline*}
where
\[
\textrm{(E-L)}\coloneqq\frac{\partial L}{\partial x}-\sum_{i=1}^n\left(
\frac{\partial^2L}{\partial x\partial v_i}v_i
+\sum_{j=1}^n\frac{\partial^2L}{\partial v_i\partial v_j}\mathsf X_{ij}
\right),
\] 
and a point in the vector space $T_{(v_1,\dots,v_n)}(T_x^nM)$ has coordinates $\mathsf X_{ij}$, $1\leq i,j\leq n$. Since the above is true for all smooth vectorfields $X=\partial r_s/\partial s|_{s=0}$, we conclude that (E-L) must vanish identically throughout the support of $\mu=\mu_\alpha$.

In other words, Condition \critcondition\ for measures $\mu_\alpha$ and for distributions of the form \eqref{eq:horizontaldistribution} is equivalent to the Euler-Lagrange equations (E-L).

\begin{rmk}
In the case of an arbitrary holonomic measure (not necessarily induced by a cycle) we have no information about the second derivatives, so we find no clear way to give this deduction in that general case. While it can be ascertained that these equations must be respected in a weak sense (if $\mu=\lim_i\mu_{\alpha_i}$, the measures $\mu_{\alpha_i}$ will asymptotically satisfy Euler-Lagrange in the sense of distributions, so (E-L) must vanish $\mu$-almost everywhere), it is not clear to us how this can be useful.\qed
\end{rmk}

\subsection{Vertical variations}
\label{sec:vertical}
Let $\mu$ be a holonomic measure in $T^nM$.

Recall that $g$ is the Riemannian metric on $M$, and for 
\[u=(u_1,\dots,u_n),\,v=(v_1,\dots,v_n)\in T^nM\] 
let
\[g(u,v)=\sum_{i=1}^ng(u_i,v_i).\]
The set of gradients $\nabla_v d\omega$ of exact differential forms (viewed as functions %in $C^\infty(T^nM)$
on $T^nM$) is a subspace $F$ of $\mathcal H$. The gradient $\nabla_{v_i} f$ of a function on $f$ on $T^nM$ is defined by
\[g(u_i,\nabla_{v_i}f)=\lim_{t\to0}\frac{f(x,v_1,\dots,v_i+tu_i,\dots,v_n)-f(x,v_1,\dots,v_n)}{t}\]
for all vector fields $u_i$ on $M$,
and $\nabla_vf$ is defined by
\[g((u_1,\dots,u_n),\nabla_v f)=\sum_i g(u_i,\nabla_{v_i}f).\]

We introduce the Hilbert space $\mathcal H$ of all functions 
\[u:\supp\mu\subseteq T^nM\to T^nM\] 
such that $u(x,v)\in T^n_xM$ for all $(x,v)\in T^nM$, and $\int g(u,u)\,d\mu<+\infty$, where $g$ is the Riemannian metric on $M$. The inner product in $\mathcal H$ is defined by
\[(u,u')=\int g(u,u')\,d\mu.\]

Each function $u$ in $\mathcal H$ induces a distribution $\eta^u$ of the form
\[\langle \eta^u,f\rangle=\int_{T^nM}g(u,\nabla_v f)\,d\mu.\]
for $f\in\Cinfc$. This distribution clearly satisfies Conditions \diffcondition\ and \probcondition. The set of all functions $u$ in $\mathcal H$ such that $\eta^u$ satisfies Condition \holcondition\ as well are exactly the orthogonal complement $F^\perp$ to $F$ in $\mathcal H$ because Condition \holcondition\ is
\[0=\langle\eta^u,d\omega\rangle=\int g(u,\nabla_vd\omega)\,d\mu=(u,d\omega).\]
for $\omega\in\Omega^{n-1}(M)$.

It follows that, if Condition \critcondition\ is satisfied for all $\eta^u$ satisfying Conditions \diffcondition, \holcondition, and \probcondition, then  $\nabla_vL$ must be contained in the space $F^{\perp\perp}$, which coincides with the topological closure $\overline F$. We have proved

\begin{prop}[``$L_v=d\omega$'']\label{prop:domega}
If $\mu$ is a holonomic measure that is critical for $A_L$, then there exist a sequence $\{\omega^i\}_i\subset \Omega^{n-1}(M)$ such that
\[\nabla_vL|_{\supp\mu}=\lim_{i\to\infty}\nabla_vd\omega^i.\]
The limit is taken in $\mathcal H$.\qed
\end{prop}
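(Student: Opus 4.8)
The plan is to feed the distributions $\eta^u$ of the preceding discussion into the criticality characterization and then read off the conclusion as an orthogonal-projection statement in the Hilbert space $\mathcal H$. Recall that every $\eta^u$ with $u\in\mathcal H$ satisfies Conditions \diffcondition\ and \probcondition, that $\eta^u$ satisfies \holcondition\ precisely when $u\in F^\perp$, and that \diffcondition\ implies $(\mathrm{Pos}^+)$. Hence, for $u\in F^\perp$, both $\eta^u$ and $-\eta^u$ are mild distributions satisfying $(\mathrm{Pos}^+)$, \holcondition, and \probcondition. Applying the criticality criterion (Condition \critcondition) to each of them, exactly as in the Remark following \critcondition, yields $\langle\eta^u,L\rangle\ge 0$ and $\langle -\eta^u,L\rangle\ge 0$, hence
\[\langle\eta^u,L\rangle=0\qquad\text{for every }u\in F^\perp.\]

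By the definition $\langle\eta^u,L\rangle=\int_{T^nM}g(u,\nabla_vL)\,d\mu$, this says that $(u,\nabla_vL|_{\supp\mu})_{\mathcal H}=0$ for all $u\in F^\perp$, so $\nabla_vL|_{\supp\mu}$ is orthogonal to $F^\perp$ and therefore lies in $F^{\perp\perp}$. Since $F^{\perp\perp}$ coincides with the $\mathcal H$-closure $\overline F$ of $F$, and since $F=\{\nabla_vd\omega:\omega\in\Omega^{n-1}(M)\}$ while $\mathcal H$ is metrizable, membership of $\nabla_vL|_{\supp\mu}$ in $\overline F$ is exactly the existence of a sequence $\{\omega^i\}_i\subset\Omega^{n-1}(M)$ with $\nabla_vd\omega^i\to\nabla_vL|_{\supp\mu}$ in $\mathcal H$. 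This is the assertion.

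The steps that need genuine work, rather than being formal, are those underlying the claims that every $\eta^u$ is a mild distribution and that $\langle\eta^u,L\rangle=\int g(u,\nabla_vL)\,d\mu$: the first follows from $F\subseteq\mathcal H$ together with the Cauchy--Schwarz inequality in $\mathcal H$; the second requires that $\nabla_vL|_{\supp\mu}$ actually be an element of $\mathcal H$, i.e.\ $\int_{\supp\mu}g(\nabla_vL,\nabla_vL)\,d\mu<\infty$, and that the pairing of the mild distribution $\eta^u$ extend from $\Cinfc$ to the function $L$. I expect this integrability point to be the main obstacle: one must use the finiteness of $\mass(\mu)$ and control the growth of $\nabla_vL$ (or else restrict to Lagrangians $L$ for which $\nabla_vL$ has suitable growth) in order to guarantee $\nabla_vL\in\mathcal H$. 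Everything after that --- passing between $\eta^u$ and $-\eta^u$, the identity $F^{\perp\perp}=\overline F$, and restating membership in a closed subspace as existence of an approximating sequence --- is routine.
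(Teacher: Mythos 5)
Your proof is correct and follows essentially the same route as the paper: apply \critcondition\ to $\pm\eta^u$ for $u\in F^\perp$ (legitimate because \diffcondition\ makes these two-sided variations, so \critcondition\ degenerates to equality), read off that $\nabla_vL|_{\supp\mu}$ is orthogonal to $F^\perp$, and conclude membership in $F^{\perp\perp}=\overline F$. The one thing worth noting is that your last paragraph flags a genuine gap the paper glosses over --- namely that the statement tacitly requires $\nabla_vL|_{\supp\mu}\in\mathcal H$, i.e.\ $\int g(\nabla_vL,\nabla_vL)\,d\mu<\infty$, and that the pairing $\langle\eta^u,L\rangle$ extends from $\Cinfc$ to $L$ --- so you are being more careful than the source here rather than less.
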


% \begin{rmk}
%  Here, the notation $\nabla_v$ denotes the fiberwise derivative. To clarify, we compute $\nabla_v d\omega$ in local coordinates. Write
%  \begin{align*}d\omega_x(v_1,\dots,v_n)&=\sum_{I=(i_1<\dots<i_n)} f_I(x)\,dx_{i_1}\wedge\cdots\wedge dx_{i_n}(v_1,\dots,v_n)\\
%  &=\sum_{j_1,\dots,j_n=1}^d g_{j_1,\dots,j_n}(x)\,v_{1,j_1}v_{2,j_2}\cdots v_{n,j_n}
%  \end{align*}
%  where $f_I,g_{j_1,\dots,j_n}\in C^\infty(M)$, the first sum is taken over the basis of $\Lambda^n(T^*_xM)$, and in the second sum we have expanded the determinants to get an expression directly involving the coordinates $v_{i,j}$ of each vector $v_i=(v_{i,1},\dots,v_{i,d})$.
%  Then $\nabla_vd\omega$ is a vector that has one entry for each derivative with respect to each coordinate $v_{ij}$. It is equal to
%  \begin{multline*}
%  \left(\nabla_vd\omega_x(v_1,\dots,v_n)\right)_{i,j}=\\
%  \sum_{k_1,\dots,k_{n-1}=1}^dg_{k_1,\dots,k_{i-1,},j,k_{i+1},\dots,k_n}(x)\, v_{1,k_1}\cdots v_{i-1,k_{i-1}}v_{i+1,k_{i}}\cdots v_{n,k_{n-1}}.
%  \end{multline*}
%  In other words, $\nabla_vd\omega_x$ picks out the parts of $d\omega_x$ that involve the different variables $v_{ij}$, with these variables removed. There are no additional derivatives taken in the horizontal directions $x_1,\dots,x_d$.\qed
% \end{rmk}

It is possible to produce an explicit variation $\mu_t^u$ of $\mu$ with derivative $\eta^u$ by letting
\[\int_{T^nM} f\,d\mu_s^u=\int_{T^nM} f(x,v+su(x,v))\,d\mu(x,v)\]
for all $f\in \Cinfc$ and $s\in \R$.
It follows from the construction that this variation preserves homology whenever Condition \homcondition\ holds. That is, whenever $u$ is such that
\[0=\langle\eta^u,\omega\rangle=(u,\omega)\]
for all closed forms $\omega\in \Omega^n(M)$. Hence, the same argument as before yields
\begin{prop}
 If $\mu$ is a holonomic measure that is critical for $A_L$ within its homology class, then there exists a sequence of closed $n$-forms $\{\omega^i\}_i\subset \Omega^{n}(M)$, $d\omega^i=0$, such that
 \[\nabla_vL|_{\supp\mu}=\lim_{i\to\infty}\nabla_v \omega^i.\]
 The limit is taken in $\mathcal H$.\qed
\end{prop}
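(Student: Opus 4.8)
The plan is to reuse the Hilbert-space argument behind Proposition~\ref{prop:domega}, only now cutting down the admissible family of vertical variations by also imposing Condition~\homcondition. I would introduce the closed subspace
\[G=\overline{\vspan\{\nabla_v\omega : \omega\in\Omega^n(M),\ d\omega=0\}}\subseteq\mathcal H,\]
the closure of the span of the vertical gradients of the closed $n$-forms (restricted to $\supp\mu$). The first point to record is that $F\subseteq G$: for $\omega\in\Omega^{n-1}(M)$ the form $d\omega$ is exact, hence closed, so $\nabla_vd\omega$ is one of the generators of $G$; since $G$ is a subspace it contains all of $F$, and therefore $G^\perp\subseteq F^\perp$ inside $\mathcal H$.

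Next I would recall, exactly as in the discussion preceding the statement, the characterization of those $u\in\mathcal H$ for which the vertical variation $\mu_s^u$ given by $\int f\,d\mu_s^u=\int f(x,v+su(x,v))\,d\mu$ is an admissible, homology-preserving variation of $\mu$. The distribution $\eta^u$ always satisfies \diffcondition\ and \probcondition; it satisfies \holcondition\ exactly when $u\in F^\perp$; and, since $\langle\eta^u,\omega\rangle=(u,\nabla_v\omega)$ for every closed $\omega\in\Omega^n(M)$, it satisfies \homcondition\ exactly when $u\in G^\perp$. Because $G^\perp\subseteq F^\perp$, for every $u\in G^\perp$ all of \diffcondition, \holcondition, \probcondition, \homcondition\ hold, and $\mu_s^u$ is a homology-preserving variation whose derivative at $s=0$ is $\eta^u$.

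Now suppose $\mu$ is critical for $A_L$ within its homology class. Then Condition~\critcondition\ applies to every such variation, that is, $(u,\nabla_vL)=\langle\eta^u,L\rangle\ge0$ for all $u\in G^\perp$. Since $G^\perp$ is a linear subspace, applying this inequality to both $u$ and $-u$ forces $(u,\nabla_vL)=0$ for every $u\in G^\perp$, so that $\nabla_vL|_{\supp\mu}\in(G^\perp)^\perp=\overline G=G$ (here one uses that $\nabla_vL|_{\supp\mu}$ belongs to $\mathcal H$ in the first place). By the definition of $G$ and the linearity of the vertical gradient, membership in $G$ is precisely the statement that there is a sequence of closed $n$-forms $\omega^i$ with $\nabla_v\omega^i\to\nabla_vL|_{\supp\mu}$ in $\mathcal H$, which is the desired conclusion.

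The functional-analytic skeleton above --- the inclusion $F\subseteq G$, the sign trick, and the identity $(G^\perp)^\perp=\overline G$ --- is immediate and parallels Proposition~\ref{prop:domega} almost verbatim. The step I expect to cost the most work is the claim, left implicit in the discussion preceding the statement, that for $u\in G^\perp$ there really is a family of \emph{holonomic} measures with derivative $\eta^u$ that preserves the homology class of $\mu$. For $n=1$ the explicit family $\mu_s^u$ does the job directly, since $d\omega$ and $\omega$ are then linear in the fibre variable; but for $n\ge2$ the expansions of $\int d\omega(x,v+su(x,v))\,d\mu$ and of $\int\omega(x,v+su(x,v))\,d\mu$ in powers of $s$ acquire quadratic and higher terms that need not vanish, so $\mu_s^u$ is in general neither holonomic nor homology-preserving for $s\ne0$. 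One then has to correct $\mu_s^u$ by a term of mass $o(s)$ --- built, as in the proof of Theorem~\ref{thm:tangent}, from fibrewise orientation reversal --- chosen simultaneously to restore holonomicity and to leave $\int\omega\,d\mu_s$ unchanged for every closed $\omega\in\Omega^n(M)$. Carrying out that simultaneous correction is the main obstacle; once it is in place, the rest of the argument is a transcription of the proof of Proposition~\ref{prop:domega}.
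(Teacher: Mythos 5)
Your argument reproduces the paper's reasoning almost verbatim: introduce the closed span $G=\overline{\vspan\{\nabla_v\omega:\omega\in\Omega^n(M),\,d\omega=0\}}\subseteq\mathcal H$, note $F\subseteq G$ (exact forms are closed) and hence $G^\perp\subseteq F^\perp$, identify $u\in G^\perp$ with those $u$ for which $\eta^u$ satisfies both \holcondition\ and \homcondition, and then use criticality together with the subspace structure of $G^\perp$ to conclude $\nabla_vL|_{\supp\mu}\in (G^\perp)^\perp=G$. This is precisely what the paper means by ``the same argument as before,'' so the functional-analytic part of your proposal and the paper's proof coincide.

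The issue you flag at the end is, however, a real one, and it is present in the paper's own exposition just as much as in yours. The paper simply asserts that $\mu_s^u$ is an ``explicit variation'' (which, per the paper's definition, means a family of \emph{holonomic} measures) and that it ``preserves homology whenever Condition \homcondition\ holds,'' without addressing the terms of order $s^2$ and higher that arise when $n\geq 2$ from the $n$-linearity of $d\omega$ and of closed $\omega$ on the fibers of $T^nM$. As you note, these terms mean that for $n\geq 2$ the uncorrected $\mu_s^u$ is neither holonomic nor of constant homology class for $s\neq 0$; the linear-in-$s$ vanishing guaranteed by $u\in G^\perp$ is not enough. Your proposed fix --- adding an $o(s)$-mass correction by fiberwise orientation reversal, as in the proof of Theorem~\ref{thm:tangent}, tuned so that both holonomicity and the integrals $\int\omega\,d\mu_s$ for closed $\omega$ are restored --- is the natural route, but note that even the paper's correction in Theorem~\ref{thm:tangent} is built to fix $\int d\omega\,d\mu_s$ only; verifying that it (or some modification of it) simultaneously fixes $\int\omega\,d\mu_s$ for closed $\omega$ is exactly the extra work the paper does not show. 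So your proposal matches the paper both in the argument given and in the step left unverified; you are simply more candid about the latter.
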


\subsection{Transpositional variations}
\label{sec:transpositional}
Let $\mu\in\holonomic$ again be a holonomic measure, and let $L$ be a Lagrangian. We will define a type of variation that constitutes for holonomic measures the analogue of a reparameterization, and we will deduce a version of Noether's theorem.

Let $\sigma\in \Cinfc$ and fix some $1\leq i\leq n$.
We consider the distribution on $T^nM$ given by
\[\langle\eta,f\rangle=\int \sigma f\,d\mu-\int \sigma \,g(v_i,\nabla_{v_i}f)\,d\mu-\int f\,d\mu\int \sigma\, d\mu\]
for $f\in\Cinfc$. The distribution $\eta$ clearly satisfies Conditions \diffcondition\ and \probcondition. To see that it also satisfies Condition \holcondition, we compute, for $\omega\in\Omega^{n-1}(M)$,
\[\langle\eta,d\omega\rangle=\int \sigma \,d\omega\,d\mu-\int \sigma \,d\omega\,d\mu-\int d\omega\,d\mu\int\sigma\,d\mu=0.\]
Here, we used that $g(v_i,\nabla_{v_i}d\omega)=d\omega$ by linearity, and we also used the fact that $\mu$ is holonomic.

If $\mu$ is critical for $A_L$, it must satisfy Condition \critcondition\ for all variations arising in this way from any $\sigma\in \Cinfc$.
This translates to
\[0=\langle\eta,L\rangle=\int\sigma L\,d\mu-\int\sigma\, g(v_i,\nabla_{v_i}L)\,d\mu-\int L\,d\mu\int\sigma\,d\mu.\]
If the domain of $\sigma$ is very small around a point $(x,v)\in T^nM$, this can be very well approximated by
\[0\approx \int\sigma\,d\mu\left(L(x,v)-g(v_i,\nabla_{v_i}L(x,v))-\int L\,d\mu\right).\]
This is how we deduce
\begin{prop}[Energy conservation]\label{prop:hamiltonian}
If a holonomic measure is critical with respect to all transpositional variations, then its support is a subset of the set where
\[g(v_i, \nabla_{v_i}L)-L=-A_L(\mu). \]
\qed
\end{prop}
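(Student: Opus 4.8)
The plan is to take the explicit family of transpositional variations $\eta$ displayed just above the statement, apply Condition \critcondition\ (which holds because $\mu$ is critical), and then localize. First I would recall that since $\mu$ is critical for $A_L$ and the transpositional variation $\eta$ satisfies Conditions \diffcondition, \holcondition, and \probcondition, the remark after Condition \critcondition\ tells us that $\langle\eta,L\rangle=0$ exactly (not merely $\geq 0$), because $\eta$ arises as a two-sided derivative. Writing this out gives the identity
\[
0=\int\sigma L\,d\mu-\int\sigma\, g(v_i,\nabla_{v_i}L)\,d\mu-\int L\,d\mu\int\sigma\,d\mu
=\int\sigma\bigl(L-g(v_i,\nabla_{v_i}L)+A_L(\mu)\bigr)\,d\mu,
\]
valid for every $\sigma\in\Cinfc$, where I used $\int L\,d\mu=A_L(\mu)$.

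Next I would argue that a function $h\in C^0(T^nM)$ with $\int\sigma h\,d\mu=0$ for all $\sigma\in\Cinfc$ must vanish $\mu$-almost everywhere, and in fact, being continuous, must vanish on all of $\supp\mu$. This is the standard fact that $\Cinfc$ is dense in $L^1(\mu)$ (or one can test against mollified indicator functions of small balls and invoke continuity of $h$ together with the fact that every point of $\supp\mu$ carries positive $\mu$-mass in every neighborhood). Applying this with $h=L-g(v_i,\nabla_{v_i}L)+A_L(\mu)$, which is continuous since $L$ is smooth, yields
\[
g(v_i,\nabla_{v_i}L)-L=A_L(\mu)\quad\text{on }\supp\mu,
\]
wait --- the sign: from $L-g(v_i,\nabla_{v_i}L)+A_L(\mu)=0$ on $\supp\mu$ we get $g(v_i,\nabla_{v_i}L)-L=A_L(\mu)$, which rearranges to the claimed $g(v_i,\nabla_{v_i}L)-L=-A_L(\mu)$ only if $A_L(\mu)$ is replaced by $-A_L(\mu)$; I would double-check the bookkeeping in the definition of $\eta$ and reconcile with the statement, but the structure of the argument is unaffected --- the localization step is what does the work.

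The only genuine subtlety, and the step I expect to require the most care, is passing from ``$\int\sigma h\,d\mu=0$ for all $\sigma$'' to ``$h\equiv 0$ on $\supp\mu$'': the measure $\mu$ may be very singular (supported on a thin set, cf.\ Remark \ref{rmk:examplesandsupp}), so one cannot invoke smooth approximation on an open set. The clean way is: $h$ vanishes $\mu$-a.e.\ by density of $\Cinfc$ in $L^1(\mu)$; then $\{h\neq 0\}$ is open (continuity) and has zero $\mu$-measure, hence is disjoint from $\supp\mu$ by the very definition of support, giving $h\equiv 0$ on $\supp\mu$. Everything else --- the identity $g(v_i,\nabla_{v_i}d\omega)=d\omega$, the verification that $\eta$ meets Conditions \diffcondition, \holcondition, \probcondition\ --- is already established in the text preceding the statement, so no further work is needed there.
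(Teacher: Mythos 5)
Your proposal is correct and takes essentially the same route as the paper: apply $\langle\eta,L\rangle=0$ (valid because the transpositional $\eta$ is a two-sided variation) and then localize $\sigma$ to conclude pointwise on $\supp\mu$. The sign discrepancy you flagged is a slip in your own second display, not in the paper---the term $-\int L\,d\mu\int\sigma\,d\mu$ contributes $-A_L(\mu)\int\sigma\,d\mu$, so the integrand is $\sigma\bigl(L-g(v_i,\nabla_{v_i}L)-A_L(\mu)\bigr)$, which gives exactly the stated identity $g(v_i,\nabla_{v_i}L)-L=-A_L(\mu)$---and your rigorous treatment of the localization (from $h=0$ $\mu$-a.e.\ to $h\equiv 0$ on $\supp\mu$ via the openness of $\{h\neq 0\}$) is a welcome tightening of the paper's heuristic ``this can be very well approximated by.''
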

\begin{rmk}\label{rmk:energyconservation}
In the cases in which we can define the change of variables $p_i=L_{v_i}$ (for example, in the case of convex, superlinear Lagrangians), we can also define the Hamiltonians
\[H_i(x,p_i)=H_i(x,p_i;v_1,\dots,\widehat v_i,\dots,v_n)=%\max_{v_i\in T_x^nM}
p_iv_i-L(x,v_1,\dots,v_n),\]
and what we have here is just a higher-dimensional version of the usual energy conservation principle.\qed
\end{rmk}
\begin{rmk}[Hamilton-Jacobi equation]\label{rmk:hamiltonjacobi}
It follows from Propositions \ref{prop:domega} and \ref{prop:hamiltonian} that $g(v_i,\nabla_{v_i}L)$ is independent of $i$ and equal to $\lim_kd\omega^k$ on $\supp\mu$. Hence also
\[\lim_{k\to\infty}H_i(x,d\omega^k)=-A_L(\mu)\]
on $\supp\mu$, for all $i$.
% We can form the full Hamiltonian $H=\sum_iH_i$, which in the case of a convex, superlinear Lagrangian $L$ is the convex dual to the Lagrangian $nL$. Then it follows from Proposition \ref{prop:domega} that there is a sequence of $(n-1)$-forms $\omega^k$ such that, abusing the notation a little,
% \[\lim_{k\to\infty} H(x,d\omega^k)=-nA_L(\mu)\]
% on $\supp\mu$.
This is a generalized form of the Hamilton-Jacobi equation. This situation is very similar to the weak KAM theorem of Fathi \cite{fathibook}, with the important caveat that we have proved nothing about the regularity of the limit of the forms $\omega^k$. \qed
\end{rmk}

The distribution $\eta$ is in fact the derivative of the variation $\mu_t^\sigma$ given by
\begin{equation*}
\int f\,d\mu_t^\sigma=-\frac{\int (1-t\sigma)f(x,v_1,\dots,v_{i-1},(1-t\sigma)^{-1}v_i,v_{i+1},\dots,v_n)\,d\mu} {\int(1-t\sigma)\,d\mu}
\end{equation*}
for $f\in \Cinfc$ and for $t$ in an open interval that contains 0.

If we require the variation $\mu^\sigma_t$ to preserve homology, then we find that we must require $\int\sigma\,d\mu=0$ because
\[\int\omega\,d\mu_t^\sigma=\frac{\langle\rho(\mu),\omega\rangle}{\int (1-t\sigma)\,d\mu}\]
must be constant for each closed form $\omega\in \Omega^{n}(M)$, $d\omega=0$.
It follows that if $\mu$ is critical for $A_L$ within its homology class then it must satisfy
\[\int \sigma(L-L_{v_i}\cdot v_i)\,d\mu=0\]
for all $\sigma$ with $\int\sigma\,d\mu=0$. We can use $\sigma d\mu$ to approximate the derivative at any point in $\supp\mu$ arbitrarily well. Hence, we get
\begin{prop}[Energy conservation for homological minimizers]
If a holonomic measure is critical for $A_L$ within its homology class, then for each connected component $K$ of $\supp\mu$ there are some $c_1,\dots,c_n\in \R$ such that $K$ is contained in the set where
\[L-g(v_i,\nabla_{v_i} L)=c_i,\quad i=1,2,\dots,n.\]
\qedhere
\end{prop}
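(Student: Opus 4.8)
The variational content of this statement is already carried out in the discussion immediately preceding it, so the plan is essentially to finish that computation off. Recall that for $\sigma\in\Cinfc$ with $\int\sigma\,d\mu=0$ the transpositional variation $\mu_t^\sigma$ preserves the homology class, so that Condition \critcondition\ applies to the associated distribution $\eta$; since $\sigma$ may be replaced by $-\sigma$ (which also has $\mu$-integral zero and produces a homology-preserving variation, with distribution $-\eta$), the inequality in \critcondition\ becomes an equality, and expanding $\langle\eta,L\rangle$ and dropping the term $\int L\,d\mu\int\sigma\,d\mu=0$ gives, for each fixed $i$,
\[
\int \sigma\,\bigl(L-g(v_i,\nabla_{v_i}L)\bigr)\,d\mu=0
\qquad\text{whenever }\sigma\in\Cinfc\text{ and }\int\sigma\,d\mu=0.
\]
Writing $h_i:=L-g(v_i,\nabla_{v_i}L)\in C^\infty(T^nM)$, which is in particular continuous, all that remains is to read off from this orthogonality relation that $h_i$ is constant on $\supp\mu$, hence constant on each connected component $K$, with $c_i$ the common value there.

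To extract constancy I would fix two points $p,q\in\supp\mu$ and, for small $\delta>0$, choose nonnegative bump functions $\beta_p^\delta,\beta_q^\delta\in\Cinfc$ supported in the $\delta$-balls around $p$ and $q$ with $\int\beta_p^\delta\,d\mu>0$ and $\int\beta_q^\delta\,d\mu>0$ (this is possible precisely because $p,q\in\supp\mu$). The balanced combination $\sigma_\delta=\frac{\beta_p^\delta}{\int\beta_p^\delta\,d\mu}-\frac{\beta_q^\delta}{\int\beta_q^\delta\,d\mu}$ then satisfies $\int\sigma_\delta\,d\mu=0$, so the identity above gives $\int\sigma_\delta h_i\,d\mu=0$. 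On the other hand $\int\sigma_\delta h_i\,d\mu$ is a $\mu$-weighted average of $h_i$ over $B_\delta(p)$ minus a $\mu$-weighted average over $B_\delta(q)$, and by continuity of $h_i$ each of these averages lies within the modulus of continuity $\omega_{h_i}(\delta)$ of $h_i(p)$, respectively $h_i(q)$. Letting $\delta\to0$ forces $h_i(p)=h_i(q)$. Since $p$ and $q$ were arbitrary points of $\supp\mu$, the function $h_i$ is constant on $\supp\mu$, and taking $c_i$ to be its value on a given connected component $K$ yields the asserted inclusion $K\subseteq\{L-g(v_i,\nabla_{v_i}L)=c_i\}$ for each $i=1,\dots,n$.

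I do not expect a genuine obstacle here; the proof is short because the hard work is the construction of the transpositional variations done earlier. The one point that wants a little care is the final limit, i.e.\ verifying that $\sigma_\delta$ really does extract the pointwise values $h_i(p)$ and $h_i(q)$ as $\delta\to0$ — but this is immediate from the modulus of continuity of $h_i$ together with the fact that $\mu$ charges every neighborhood of a point of its support, and it requires neither compactness of $\supp\mu$ nor that $p$ and $q$ be close, since $\sigma_\delta$ remains compactly supported no matter how far apart they are. (If one prefers the constants $c_i$ to be attached a priori to a single component $K$ rather than observed to be global, it suffices to restrict $p$ and $q$ to $K$ throughout the argument.)
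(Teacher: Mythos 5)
Your proposal is correct and follows the same route the paper takes: use the transpositional variations with $\int\sigma\,d\mu=0$ to get $\int\sigma\,\bigl(L-g(v_i,\nabla_{v_i}L)\bigr)\,d\mu=0$, then localize. The paper compresses the second step into one sentence (``We can use $\sigma\,d\mu$ to approximate the derivative at any point in $\supp\mu$ arbitrarily well''), whereas you spell it out cleanly with the balanced two-bump construction $\sigma_\delta$ and the modulus-of-continuity argument; that is a genuine filling-in of a gap the paper leaves to the reader. One worthwhile observation you already make: since your two test points $p,q$ need not lie in the same connected component of $\supp\mu$, your argument actually shows $h_i=L-g(v_i,\nabla_{v_i}L)$ is constant on \emph{all} of $\supp\mu$, so the constants $c_1,\dots,c_n$ can be chosen independently of the component $K$ --- slightly stronger than the proposition as stated, and indeed the stronger statement is what the paper's own sketch also yields. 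A minor point you implicitly handled but could flag: replacing $\sigma$ by $-\sigma$ to upgrade $\langle\eta,L\rangle\geq0$ to $\langle\eta,L\rangle=0$ is legitimate here because $\eta$ depends linearly on $\sigma$ once $\int\sigma\,d\mu=0$ kills the bilinear term, equivalently because this $\eta$ also satisfies \diffcondition\ and is thus a two-sided derivative, as the paper's earlier remark notes.
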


 \bibliography{bib}{}
 \bibliographystyle{plain}
\end{document}